\definecolor{red}{rgb}{1.0,0.0,0.0}
\def\red#1{{\textcolor{black}{#1}}}
\definecolor{blu}{rgb}{0.0,0.0,1.0}
\definecolor{violet}{rgb}{0.5,0.0,0.4}
\def\violet#1{{\textcolor{black}{#1}}}
\newtheorem{Theorem}{Theorem}[section]
\newtheorem{Proposition}[Theorem]{Proposition}
\newtheorem{theorem}[Theorem]{Theorem}
\newtheorem{proposition}[Theorem]{Proposition}
\newtheorem{lemma}[Theorem]{Lemma}
\newtheorem{corollary}[Theorem]{Corollary}
\newtheorem{definition}[Theorem]{Definition}
\theoremstyle{definition}
\theoremstyle{definition}
\newtheorem{Assumption}[Theorem]{Assumption}
\newtheorem{Remark}[Theorem]{Remark}
\newtheorem{assumption}[Theorem]{Assumption}
\newtheorem{remark}[Theorem]{Remark}
\numberwithin{equation}{section}
\def\cali{\mathcal{I}}
\renewcommand{\phi}{\varphi}
\newcommand{\eyedx}{{\mathrlap{\kern 0.37em\bullet}\ocircle}}
\newcommand{\lipcz}[1]{\operatorname{Lip}_{\mbox{\small{loc}},c_0}(#1)}
\def\R{\mathbb R}
\def\N{\mathbb N}
\def\E{\mathbb E}
\def\cala{{\mathcal A}}
\def\call{{\mathcal L}}
\def\<{\left\langle }
\def\>{\right\rangle }
\def\P{\mathfrak{P}}
\def\N{\mathcal{N}}
\def\norm{{\| \kern -.05em | }}
\def\R{\mathbb R}
\def\N{\mathbb N}
\def\E{\mathbb E}
\def\P{\mathbb P}
\def\F{\mathbb F}
\def\cala{{\mathcal A}}
\def\calc{{\mathcal C}}
\def\cali{{\mathcal I}}
\def\call{{\mathcal L}}
\def\calm{{\mathcal M}}
\def\to{\rightarrow}
\def\<{\left\langle }
\def\>{\right\rangle }
 \definecolor{blu}{rgb}{0.0,0.0,1.0}
 \definecolor{violet}{rgb}{0.5,0.0,0.4}
\definecolor{rose}{rgb}{1.0, 0.33, 0.64}
\renewcommand\epsilon{\varepsilon}
\def\F{{\cal F}}
  \def \R {{\mathbb {R} }}
  \def \extR {{\overline{\mathbb {R}} }}
    \def \N {{\mathbb {N} }}
    \def \F {{\mathbb {F} }}
    \def \E {{\mathbb {E} }}
\def \0{{\textbf{0}}}
\def\norm{{\| \kern -.05em | }}
    \newcommand{\be}{\begin{equation}}
    \newcommand{\ee}{\end{equation}}
    \newcommand{\bp}{\begin{proposition}}
    \newcommand{\ep}{\end{proposition}}
    \newcommand{\bl}{\begin{lemma}}
    \newcommand{\el}{\end{lemma}}
    \newcommand{\ba}{\begin{aligned}}
    \newcommand{\ea}{\end{aligned}}
    \newcommand{\bde}{\begin{definition}}
    \newcommand{\ede}{\end{definition}}
    \newcommand{\bc}{\begin{corollary}}
    \newcommand{\ec}{\end{corollary}}
    \newcommand{\br}{\begin{remark}}
    \newcommand{\er}{\end{remark}}
    \newcommand{\bi}{\begin{itemize}}
    \newcommand{\ei}{\end{itemize}}
\begin{document}

\title{Irreversible investment with
fixed adjustment costs:\\ a stochastic impulse control approach}

\author{Salvatore Federico\thanks{\thinspace 
Dip.\ di Economia Politica e Statistica, Universit\`a di Siena (Italy). E-mail:  \texttt{salvatore.federico@unisi.it}.} \and 
Mauro Rosestolato\thanks{\thinspace 
CMAP, Ecole Polytechnique, Paris (France). E-mail: \texttt{mauro.rosestolato@gmail.com}.} \and
 Elisa Tacconi\thanks{\thinspace  Dipartimento di Finanza, Bocconi University,  Milan (Italy). E-mail: \texttt{elisa.tacconi@unibocconi.it}.}}
\maketitle
\begin{abstract}
We consider an optimal stochastic impulse control problem over an infinite time horizon motivated by a model of irreversible investment choices with fixed adjustment costs.  By employing techniques of viscosity solutions and relying on  semiconvexity arguments, we prove that the value function is a classical solution to  the associated quasi-variational inequality. This enables us to characterize the structure of the continuation and action regions  and construct an optimal control. Finally, we focus on the linear case, discussing, by a numerical analysis, the sensitivity of the solution with respect to the relevant parameters of the problem.
\end{abstract}

\noindent \textbf{Keywords:}  Impulse stochastic optimal control, Quasi-variational inequality, Viscosity solution, Irreversible investment, Fixed cost.

\bigskip

\noindent \textbf{A.M.S.\ Subject Classification}: 93E20 (Optimal stochastic control); 35Q93 (PDEs in connecton woth control and optimization); 35D40 (Viscosity solution); 35B65 (Smoothness and regularity of solutions). \\\\
\noindent \textbf{J.E.L.\ Subject Classification}: C61	(Optimization Techniques; Programming Models; Dynamic Analysis); 
D25	(Intertemporal Firm Choice: Investment, Capacity, and Financing); E22	(Investment; Capital; Intangible Capital; Capacity).
\\\\

\noindent \textbf{Acknowledgements.} \violet{The authors are sincerely grateful to the Associate Editor and to two anonymous Referees for their careful reading and very valuable comment that improved the final version of the paper.} They also thank  Giorgio Ferrari for his very valuable comments and suggestions. Mauro Rosestolato thanks the Department of Political Economics and Statistics of the University of Siena for the kind hospitality in March 2017 and
the grant Young Investigator Training Program financed by Associazione di Fondazioni e Casse di Risparmio Spa supporting this visit.
He also thanks the ERC 321111 Rofirm
 for the financial support.


\tableofcontents

\section{Introduction}

In this paper we consider a one dimensional stochastic impulse optimal control problem modeling the economic problem of irreversible investment with fixed adjustment cost. 

Let   $X=\{X_t\}_{t\geq 0}$ be a  real valued positive process representing an economic indicator (such as the  GDP of a country, the production capacity of a firm, and so on) on which a planner/manager can intervene.
When no intervention is undertaken, it is  assumed that the process $X$ evolves autonomously according to a time-homogeneous It\^o diffusion.
On the other hand, the planner may act on this process,  increasing its value, by choosing a sequence of interventions dates $\{\tau_n\}_{n\geq 1}$ and of intervention amplitudes $\{i_n\}_{n\geq 1}$, with $i_n>0$\,(\footnote{The fact that only positive intervention, i.e.\ $i_n>0$, is allowed  is expressed in the economic literature of Real Options by saying that the investment is \emph{irreversible}.}).   
 Hence, the control is represented by a sequence of couples $\left\{(\tau_n,i_n)\right\}_{n\geq 1}$:\ the first component represents the intervention time, the second component the size of intervention. 
The goal of the controller is to maximize over the set of all admissible controls, the expected total discounted  income 
\[\E\left[\int_0^\infty e^{-\rho t} f(X_t)dt- \sum_{n\geq 1}e^{-\rho \tau_n} (c_0i_n+c_1)\right],\]
where $f$ is a reward function, $c_0>0$ and $c_1>0$ represent, respectively, the proportional and the fixed cost of intervention, and $\rho>0$ is a discount factor. 

From the modeling side, our problem is the ``extension'' to the case $c_1>0$ of the same problem already treated in the literature in the case $c_1=0$ (see, e.g., \cite[Ch.\ 4, Sec.\ 5]{P}.  In this respect, it applies to economic problems of capacity expansion, notably irreversible investment problems (\footnote{Other than in \cite[Ch.\ 4, Sec.\ 5]{P}, irreversible and reversible  investment problems with no fixed investment costs are largely treated in the mathematical economic literature, both over finite and infinite horizon. We mention, among others, 
\cite{AE,AFPV, A1, A2, BK, B, CF, CH2,  DDSV, FP, FS, GSZ, SPA0, MOR, AAP, GP, M, MZ, O, RS, W}.}). 
 
From the theoretical side, the introduction of a fixed  cost of control is relevant, as it leads from a problem well posed (in the sense of existence of optimal controls) as a singular control problem  to a problem well posed as an impulse control problem (\footnote{The stochastic impulse control setting has been widely employed in several other applied fields:\ e.g., exchange rate \cite{CZ,JP}, portfolio optimization with transaction costs \cite{K,MO}, inventory  and cash management \cite{CoRi, S1, S2}, and  real options \cite{HT,MT}.}). Such a change is not priceless at the theoretical level.
Indeed, the introduction of a fixed cost of control has two unpleasant effects. Firstly, it destroys the concavity of the objective functional even if the revenue function is concave. Secondly, when approaching the problem by dynamic programming techniques (as we do),  the  dynamic programming equation has a nonlocal term and takes the form of  a \emph{quasi-variational inequality} (QVI, hereafter), whereas it is a \emph{variational inequality} in the singular control case.    

\paragraph*{Related literature.} 
 First of all, it is worth noticing that the stochastic impulse control setting has been widely employed in several applied fields:\ e.g., exchange and interest rates  \cite{CZ,  JP, MFM}, portfolio optimization with transaction costs \cite{EH, K,MO}, inventory  and cash management \cite{BS,  CLP, CoRi, DY, DY2, HST, HYZ, MSW, ODV, S1, S2, KY},   real options \cite{HT,MT}, reliability theory \cite{Anderson}. More recently,  games of stochastic impulse control have been investigated with application to pollution \cite{FTo}.
 
From a modeling point of view, the closest works to ours can be considered \cite{Al,AlLe,CS, E, K}. On the theoretical side,  starting from the classical book \cite{BeLio}, several works investigated  QVIs associated to stochastic impulse optimal control in $\R^n$. Among them, we  mention the recent \cite{GuoWu} in a diffusion setting and  \cite{BEM, DGW} in a jump-diffusion setting. In particular, \cite[Ch.\,4]{BeLio}  deals with Sobolev type solutions, whereas \cite{GuoWu} deals with viscosity solutions. These two works prove a $W^{2,p}$- regularity, with $p<\infty$, for the solution of QVI, which, by classical Sobolev embeddings, yields a $C^1$-regularity.
However, it is typically not easy  to obtain by such regularity 
information on the structure of the so called \emph{continuation and action regions}, hence on the candidate optimal control. If this structure is established, then one can try to prove a verificiation theorem to prove that the candidate optimal control is actually optimal. In a stylized one dimensional example, \cite[Sec.\,5]{GuoWu} successfully employs  this method by exploiting the regularity result proved in \cite[Sec.\,4]{GuoWu} to depict the structure of the continuation and action region for the problem at hand. Concerning verification, we need to mention the recent paper \cite{Belak}, which provides a non-smooth verification theorem in a quite general setting based on the stochastic Perron method to construct a viscosity solution to QVI; also this paper, in the last section, provides and application of the results to a one dimensional problem with an implementable solution. 
In dimension one other approaches, based on \emph{excessive mappings} and iterated optimal stopping schemes, have been successfully  employed in the context of stochastic impulse control (see \cite{Al, AlLe, E, HSZ}). More recently, these methods have been extended to Markov processes valued in metric spaces (see \cite{C}); again a complete description of the solution is shown in one dimensional examples.

\paragraph*{Contribution.}  From the methodological side our work is close to \cite{GuoWu}. As in the latter, we follow a \emph{direct analytical method} based on viscosity solutions and we do not employ a  \emph{guess-and-verify} approach(\footnote{See, e.g., \cite{BSZ,CoRi, K, JZ, MO} and, in a much more general context of jump-diffusion, \cite[Ch.\,6]{OS} for the guess-and-verify approach.}). Indeed, we directly provide \emph{necessary optimality conditions} that, by uniqueness, fully characterize the solution. In particular, we do not postulate the smooth-fit principle, as it is usually done in the guess-and-verify approach, but  we prove it directly(\footnote{The smooth-fit principle has also been established, when the diffusion is assumed to be transient, by techniques based on excessive function  (see \cite{ST})}.). 
To the best of our knowledge a rigorous analytical treatment as ours of the specific problem treated in this paper seems to be still missing in the literature. It is important to notice that our analysis yields a a complete and implementable characterization of the optimal control policy through the identification of the continuation and action regions. Since  the aforementioned techniques based on excessive mappings seems to be perfectly employable to our problem (even under weaker assumption), it is worth to point out that our contribution is \emph{methodological}. As it is well known, the (implementable) characterization of the optimal control in stochastic impulse control problems is a challenging task in dimension larger than one. Hence, it is important to have at hand an approach like ours that might be generalized to address impulse control problems in multi-dimensional setting. To this regard, it is worth to notice the following. 
\begin{itemize}
\item To the best of our knowledge, the only study providing a complete picture of the solution in dimension two --- through a two dimensional $(S,s)$-rule --- is the recent paper  \cite{BB}. The techniques used there are analytical and based on the study of QVI's. Unfortunately, in this paper, the authors are able to provide a complete solution only in a very specific case. 
\item In the presence of semiconvex data, our approach to prove $C^1$ regularity of the value function based on semiconvexity jointly with the viscosity property, unlike \cite{GuoWu}, might be successful to prove a directional regularity result just along nondegenerate directions (see \cite{FP} in a singular control context).     
\item The directional regularity result mentioned above might be sufficient to derive the right optimality condition to solve the control problem (see again \cite{FP} in a singular control context).
\end{itemize}

\paragraph{Contents.} In Section \ref{sec:formulation} we set up the problem. In Section \ref{sec:pre} we state some preliminary results on the value function $v$, in particular we show that it is semiconvex. In Section \ref{sec:DPE}
we derive  QVI  associated to  $v$ and show that it solves the latter in viscosity sense. After that, we prove that $v$ is of class $C^2$ in the continuation region (the region where the differential part of QVI holds with equality, see below) and  of class $C^1$ on the whole state space (Theorem \ref{th:viscosoluQVI}, our first main result), hence proving the smooth fit-principle.  We prove the latter result relying just on the semiconvexity of $v$ and exploting the viscosity supersolution property; unlike \cite{GuoWu}, this allows to avoid the use of a deep theoretical result such as the Calderon-Zygmund estimate. So, with respect to the aforementioned reference, our method of proof is cheaper from a theoretical point of view; on the other hand, it heavily relies on assumptions guaranteeing the semiconvexity of $v$. 
In Section \ref{sec:exp} we use the latter regularity to establish the structure  of the \emph{continuation} and \emph{action} regions --- the real unknown of the problem --- showing that they are both intervals. This allows to express explicitly $v$ up to the solution of a nonlinear algebraic system of three variables (Theorem \ref{theoremformvaluefunction}, our second main result).  In Section \ref{sec:opt}, relying on the results of the previous section, we are able to construct an optimal control policy (Theorem \ref{verificationtheorem}, our third main result). The latter turns out to be based on the so called $(S,s)$-\emph{rule} (\footnote{This is a  well known rule  in the economic literature of inventory problems, see  \cite{AHM, S1,S2}.}):\ the controller acts whenever the state process reaches a minimum level $s$ (the ``trigger'' boundary) and brings immediately the system at the level $S>s$ (the ``target'' boundary).
Finally, in Section \ref{sec:num}, we provide a numerical illustration of the solution  when $X$ follows  a geometric Brownian motion dynamics between intervation times, analyzing the sensitivity of the solution with respect to the volatility coefficient $\sigma$ and  to and the fixed cost $c_1$.

\section{Problem formulation}\label{sec:formulation}

We introduce some notation. We set  
 $$
\R_+\coloneqq [0,+\infty),\qquad
\extR_+\coloneqq [0,+\infty],
\qquad\R_{++}\coloneqq (0,+\infty).
 $$ The set $\R_{++}$ will be the state space of our control problem. 
 Throughout the paper we  adopt the conventions $e^{-\infty}=0$ and  $\inf\emptyset =\infty$.
Moreover, we simply use the symbol $\infty$ in place of $+\infty$ when positive quantities are involved and no confusion may arise. 
Finally, the symbol 
$n$ will always denote a   natural number.

Let $(\Omega,\mathcal{F},
 \{\mathcal{F}_t\}_{t\geq 0}, \mathbb{P})$ be a filtered probability space satisfying the usual conditions and supporting a a one dimensional  Brownian motion  $W=\{W_t\}_{t\geq 0}$.
We denote $
\mathbb{F}\coloneqq \{\mathcal{F}_t\}_{t\in \overline{\mathbb{R}}_+}$, where we set
$\displaystyle{\mathcal{F}_\infty\coloneqq \bigvee_{t\in \mathbb{R}_+}\mathcal{F}_t}$.
We take $b,\sigma\colon \mathbb{R}\rightarrow \mathbb{R}$  satisfying the following
\begin{assumption}\label{eq:2017-03-23:00}
$b,\sigma\colon \mathbb{R}\rightarrow \mathbb{R}$
are Lipschitz continuous functions, with Lipschitz constants $L_b,L_\sigma$, respectively,  identically equal to $0$ on  $(-\infty,0]$,
and  with $\sigma>0$ on $\mathbb{R}_{++}$.
Moreover, $b,\sigma\in C^1(\R_{+})$,
and $b',\sigma'$ are Lipschitz continuous on $\mathbb{R}_{++}$, with Lipschitz constants  $\tilde{L}_b, \tilde{L}_\sigma>0$, respectively.
\end{assumption}
\begin{remark}
The requirement that $b',\sigma'$ are Lipschitz continuous is typical when one wants to prove  the semiconvexity/semiconcavity of the value function in stochastic optimal control problem (see, e.g., the classical reference \cite[Ch.\,4, Sec.\,4.2]{YongZhou} in the context of regular stochastic control; \cite{BEM} in the context of impulse control). We use this assumpton since,  as outlined in the introduction, in our approach the proof of the semiconvexity of the value function will be a crucial step  towards the proof of the $C^1$ regularity. 
\end{remark}
%
%

Let $\tau$ be a (possibly not finite) $\mathbb{F}$-stopping time and let $\xi$ be an $\mathcal{F}_\tau$-measurable random variable.
By standard SDE's theory with Lipschitz coefficients, Assumption~\ref{eq:2017-03-23:00} guarantees 
that
 there exists a unique (up to  undistinguishability) $\mathbb{F}$-adapted process $Z^{\tau,\xi}=\{Z^{\tau,\xi}_t\}_{t\geq 0}$ with continuous trajectories on $[\tau,\infty)$, 
 such that 
 \begin{equation}
   \label{eq:SDE}
     Z^{\tau,\xi}_t=
     \begin{dcases}
       0& \mbox{for\ }t\in[0,\tau)\\
\xi +\int_\tau^t b(Z^{\tau,\xi}_s)ds
     +\int_\tau^t \sigma(Z^{\tau,\xi}_s)dW_s & \P\mbox{-a.s.}, \mbox{for\ }t\geq \tau.
   \end{dcases}
 \end{equation}
Moreover, by
a straightforward adaptation of \cite[Sec.~5.2, Prop.~2.18]{KS} to random initial data, we obtain
 \begin{equation}\label{monotone}
\xi,\eta\ \mathcal{F}_\tau\mbox{-measurable random variables, }
\xi\leq \eta\ \mbox{$\mathbb{P}$-a.s.}
  \ \Longrightarrow \ Z^{\tau,\xi}_{t+\tau}\leq Z^{\tau,\eta}_{t+\tau} \  \P\mbox{-a.s.}, \ \forall t\geq 0.
\end{equation}
Now fix $x\in \R_{++}$.
By
\eqref{monotone}
 and Assumption~\ref{eq:2017-03-23:00}, it 
follows that  $Z^{0,x}$ takes values in $\R_+$. 
Due to the nondegeneracy assumption on $\sigma$ over $\R_{++}$, as a consequence of the results of \cite[Sec.~5.5.C]{KS},  the process  $Z^{0,x}$ is a (time-homogeneous) regular 
diffusion on  $\R_{++}$; i.e.,
setting 
$
\tau_{x,y}\coloneqq \inf\left\{t\geq 0\colon Z^{0,x}_t=y\right\},
$
one has 
$$
\P\{\tau_{x,y}<\infty\}>0  \ \ \ \forall y\in\R_{++}.
$$
In Appendix we show that  Assumption~\ref{eq:2017-03-23:00} guarantees that the boundaries $0$ and $+\infty$ are natural for $Z^{0,x}$  in the sense of Feller's classification.

\smallskip
We introduce now a  set of
 admissible controls and their corresponding controlled process.  As a  set of
admissible controls (i.e., feasible investment strategies) we consider the set $\mathcal{I}$ of all sequences of couples  $I=\left\{(\tau_n,i_n)\right\}_{n\geq 1}$ such that:
\begin{enumerate}[(i)]
\item
$\{\tau_n\}_{n\geq 1}$ is an increasing sequence of $\extR_+$-valued $\mathbb{F}$-stopping times
 such that $\tau_n<\tau_{n+1}$ $\P$-a.s. over the set $\{\tau_n<\infty\}$ and  
 \begin{equation}
   \label{eq:2015-11-23:00}
\lim_{n\rightarrow \infty}\tau_n =\infty\ \ \ \mathbb{P}\mbox{-a.s.};
\end{equation}
\item $\{i_n\}_{n\geq 1}$ is a sequence of $\mathbb{R}_{++}$-valued random variables  
  such that $i_n$ is $\mathcal{F}_{\tau_n}$-measurable for every $n\geq 1$;
\item The following integrability condition holds:
  \begin{equation}
    \label{eq:2015-12-18:01}
\sum_{n\geq 1}    \mathbb{E} \left[ e^{-\rho \tau_n}(i_n+1) \right] <\infty.
  \end{equation}

\end{enumerate}
For $n\geq 1$, $\tau_n$
represents an  intervention time, whereas
$i_n$ represents 
the intervention size at the corresponding intervention time $\tau_n$.
Condition 
\eqref{eq:2015-11-23:00}
ensures that, within a finite time interval, only a finite number of  actions are executed.
We allow the case  $\tau_n=\infty$ definitively, meaning that only a finite number of actions are taken. Condition \eqref{eq:2015-12-18:01} ensures that the functional defined below is well defined. We call  \emph{null} control any sequence $\{(\tau_n,i_n)\}_{n\geq 1}$ such that $\tau_n=\infty$ for each $n\geq 1$ and denote any of  them by $\emptyset$.  
Notice that using the same  notation $\emptyset$ for the null controls is not ambiguous with regard to the control problem we are going to define, as any null control will give rise to the same payoff.

\smallskip
Given a control $I\in \mathcal{I}$, an initial stopping time $\tau\geq 0$ and a random variable $\xi>0$ $\P$-a.s. $\mathcal{F}_\tau$-measurable, we denote by $X^{\tau,\xi,I}=\{X^{\tau,\xi,I}_r\}_{r\in [0,\infty)}$ the unique (up to indistinguishability)
c\`adl\`ag process on $[\tau,\infty)$
solving the SDE (in integral form)
\begin{equation}
  \label{2017-09-17:01}
  X^{\tau,\xi,I}_t=
  \begin{dcases}
    0& \mbox{for\ }t\in [0,\tau)\\
\xi+ \int_\tau^t b(X^{\tau,\xi,I}_s) ds +\int_\tau^t\sigma(X^{\tau,\xi,I}_s)dW_s+
  \sum_{n\geq 1}\mathbf{1}_{[\tau,r]}(\tau_n)\cdot i_n& \mbox{for\ }t\in [\tau,\infty)
\end{dcases}
\end{equation}
If $t=0$ and $\xi\equiv x\in \R_{++}$ then we denote $X^{0,\xi,I}$ by $X^{x,I}$.
It is easily seen that, if $\tau'$ is another stopping time such that $\tau'\geq \tau$, then 
the following flow property holds true
\begin{equation}
  \label{eq:2017-09-17:02}
  X^{\tau,\xi,I}_t=X^{\tau',X^{\tau,\xi,I}_{\tau'^-},I}_t\ \forall t\geq \tau', \  \mathbb{P}\mbox{-a.e..}
\end{equation}
Note that, up to undistinguishability,  we have 
$
X^{x,\emptyset}= Z^{0,x}$.
 Moreover, setting by convention $\tau_0\coloneqq 0$, $ i_0\coloneqq 0$, and $X_{0^-}\coloneqq x$,  we have recursively on $n\in \N$
$$
X^{x,I}_t
=
Z^{\tau_n,
X^{x,I}_{\tau_n}
}_t
\ \forall t\in[\tau_n,\tau_{n+1}),
\  \mbox{$\P$-{a.s.}}.$$
Then, by \eqref{monotone}, we have
  the following monotonicity of the controlled  process with respect to the initial data 
\begin{equation}\label{ineq} 
X_t^{x,I}\leq  X_t^{x',I}\ 
 \P\mbox{-a.s.},
\forall t\geq 0, \  \forall I\in \mathcal{I}, \ \forall x,x'\colon 0<x\leq x'.
 \end{equation} 

Next, we introduce the optimization problem.
Given $\rho>0$,
$f\colon\R_{++}\to \R_{++}$ measurable,
$c_0>0$, $c_1>0$, we 
define the payoff functional 
$J$
 by 
\begin{equation}\label{fun}
J(x,I)\coloneqq\E\left[\int_0^{\infty}e^{-\rho t} {f}(X^{x,I}_t)dt-\sum_{n\geq 1}e^{-\rho\tau_n}(c_0 i_n+c_1)\right],\qquad
\forall x\in \R_{+},\ \forall I\in\mathcal{I}.
\end{equation}


We notice that \eqref{eq:2015-12-18:01}
 and the fact that $f$ is bounded from below ensure that   
$J(x,I)$   is well defined and takes values in $\R\cup\{\infty\}$.

We will make use of the
following assumption on $f$.

\begin{assumption}\label{2016-11-06:00}
 $f\in C^1(\R_{++};\R_+)$,  $f'>0$,  $f'$ is strictly decreasing,
 and
$f$  satisfies the  Inada condition at $\infty$: 
 \begin{equation*}
    f'(\infty)\coloneqq \lim_{x\rightarrow \infty}f'(x)=0.
 \end{equation*}
 Finally, without loss of generality, we assume that
 $f(0^+)\coloneqq {\displaystyle\lim_{x\rightarrow 0^+} f(x)=0}$.
\end{assumption}
Note that 
\begin{equation}\label{MB}
M_b:=\left(\sup_{x\in \R_{++}} b'(x)\right)^+<\infty
\end{equation} by Assumption \ref{eq:2017-03-23:00}.
The following assumption will ensure finiteness for the problem (Proposition~\ref{prop:preB}). 
\begin{assumption}\label{ass:rho}
 $\rho>M_b$.
\end{assumption}

\noindent 
Assumptions \ref{eq:2017-03-23:00}, \ref{2016-11-06:00}, and \ref{ass:rho}
will be standing  through the rest of the manuscript.

\medskip
 The optimal control problem that we address consists in maximizing the functional \eqref{fun} over $I\in\mathcal{I}$, i.e., for each $x\in \mathbb{R}_+$, we consider the maximization problem
 \begin{equation}
   \label{eq:2015-11-23:01}\tag{{\bf P}}
\sup_{I\in \mathcal{I}}
J(x,I).
 \end{equation}

\smallskip
\begin{remark}\label{rem:cost}
The fact that $c_1>0$ means that there is a fixed cost when the investment occurs. This provides that  \eqref{eq:2015-11-23:01} is well posed as an \emph{impulse} control problem, i.e.\ optimal controls can be found within the class of impulse controls . If it was $c_1=0$ (only proportional intervention cost), the setting providing existence of optimal controls would be the more general \emph{singular} control setting (see e.g. \cite[Ch.~4]{P}). For comparison between impulse and singular control we refer to \cite{BLL}; for the relevance of the introduction of the fixed cost we refer to \cite{OUZ}, where the asymptotics for $c_1\to 0$ is investigated. In Subsection \ref{sec:fixedcost}, we comment this issue through the numerical outputs.

We also notice that one might consider more general intervention costs $C:\R_{++}\to\R_+$ increasing and convex, (e.g. $C(i)=\alpha i^2+\beta i+c_1$ with $\alpha,c_1>0$ and $\beta\geq 0$). We believe that, at least for a suitable subclass of such cost functons, the solution would depict the same structure as the one we provide here  in the affine case (i.e.\ $C(i)=c_0i+c_1$). On the other hand, we underline that at many points our proofs make use of the affine structure of the cost and the generalization seems to be not straightforward.
\end{remark}

\section{Preliminary results on the value function}\label{sec:pre}

In this section we introduce the value function associated with \eqref{eq:2015-11-23:01} and
establish some  basic properties of it.
 We define the value function $v$ by
\begin{equation}\label{value}
v(x)\coloneqq   \sup_{I\in\mathcal{I}}J(x,I), \ \ \ \forall x\in \R_{++}.
\end{equation}
We notice that $v$ is $\overline{\mathbb{R}}_{+}$-valued,
 as by Assumption \ref{2016-11-06:00}
 \begin{equation}
  \label{eq:2017-04-07:03}
 v(x)\geq J(x,\emptyset)=\hat{ v}(x)\coloneqq \mathbb{E} \left[ \int_0^{\infty}e^{-\rho t} f(X^{x,\emptyset}_t)dt\right]\geq 0 \qquad \forall x\in \mathbb{R}_{++}.
\end{equation} 
Note that $\hat v$ is nondecreasing as $f'>0$ (Assumption \ref{2016-11-06:00}) and by \eqref{ineq}. 

\begin{Proposition}\label{prop:mono} $v$ is nondecreasing.
\end{Proposition}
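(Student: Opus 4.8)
The plan is to derive monotonicity of $v$ directly from the monotonicity of the controlled state process with respect to the initial datum, namely \eqref{ineq}, combined with the monotonicity of the reward function $f$ (Assumption \ref{2016-11-06:00}). Concretely, fix $0 < x \leq x'$ and let $I = \{(\tau_n,i_n)\}_{n\geq 1} \in \mathcal{I}$ be an arbitrary admissible control. The key observation is that the \emph{same} control $I$ is admissible for both initial values, since the admissibility conditions (i)--(iii) do not depend on $x$. Applying \eqref{ineq} with this fixed $I$, we get $X^{x,I}_t \leq X^{x',I}_t$ $\mathbb{P}$-a.s. for all $t\geq 0$.

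Next I would compare the payoffs $J(x,I)$ and $J(x',I)$ term by term in the definition \eqref{fun}. The cost part $\sum_{n\geq 1} e^{-\rho\tau_n}(c_0 i_n + c_1)$ is \emph{identical} for the two initial values, since it depends only on $I$ and not on the state trajectory; hence it cancels in the comparison. For the revenue part, since $f$ is nondecreasing on $\R_{++}$ (indeed $f' > 0$ by Assumption \ref{2016-11-06:00}) and $X^{x,I}_t \leq X^{x',I}_t$ pathwise, we get $f(X^{x,I}_t) \leq f(X^{x',I}_t)$ for every $t$, hence, integrating against $e^{-\rho t}\,dt$ and taking expectations (all quantities bounded below, $f\geq 0$, so the integrals are well defined in $[0,\infty]$),
\[
\mathbb{E}\left[\int_0^\infty e^{-\rho t} f(X^{x,I}_t)\,dt\right] \leq \mathbb{E}\left[\int_0^\infty e^{-\rho t} f(X^{x',I}_t)\,dt\right].
\]
Combining, $J(x,I) \leq J(x',I)$ for every $I\in\mathcal{I}$. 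Taking the supremum over $I\in\mathcal{I}$ on both sides yields $v(x) = \sup_I J(x,I) \leq \sup_I J(x',I) = v(x')$, which is the claim.

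There is no real obstacle here: the argument is a routine pathwise comparison. The only points requiring a word of care are (a) checking that the cost term genuinely does not depend on the initial state — which is immediate from \eqref{fun}, as it involves only $\{(\tau_n,i_n)\}$ — and (b) making sure the inequality between the expected revenue integrals is legitimate despite the possibility of $+\infty$ values; this is fine because $f \geq 0$ (Assumption \ref{2016-11-06:00}) makes both sides well-defined elements of $\overline{\R}_+$ and monotone convergence/monotonicity of the integral applies directly. One could also remark that this already reproves $v \geq \hat v \geq 0$ by the same token, but that is subsumed in \eqref{eq:2017-04-07:03}.
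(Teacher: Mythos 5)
Your proof is correct and is exactly the argument the paper intends: the paper's proof is the one-line remark that the claim follows from Assumption \ref{2016-11-06:00} and the comparison property \eqref{ineq}, and you have simply written out that same argument in full detail (same control for both initial data, pathwise ordering of the states, monotonicity of $f$, cost term unchanged, then take the supremum). No issues.
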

\begin{proof} Let $0<x\leq x'$. Since $f'>0$ (see Assumption \ref{2016-11-06:00}), from \eqref{ineq} we get $J(x;I)\leq J(x';I)$ for every $I\in\mathcal{I}$. The claim follows by taking the supremum over $I\in \mathcal{I}$.
\end{proof}

We denote by $f^*$ the Fenchel-Legendre transform of $f$ on $\mathbb{R}_{++}$:
\begin{equation}
  \label{eq:2017-04-07:02}
  f^*(\alpha)\coloneqq \sup_{x\in {\mathbb{R}_{++}}} \big\{ f(x)-\alpha x \big\},\qquad \forall \alpha\in \mathbb{R}_{++}.
\end{equation}
Nonnegativity and continuity of $f$ (see Assumption \ref{2016-11-06:00}) and the condition  $f'(\infty)=0$ (again Assumption \ref{2016-11-06:00}) guarantee that $0\leq f^*(\alpha)<\infty$ for all $x\in \mathbb{R}_{++}$.


\begin{proposition}\label{prop:preB}
For all $\alpha\in \left(0,c_0\rho\right]$
 we have
\begin{equation}
   \label{estimateB}
0\leq
\hat{ v}(x)\leq   v(x)\leq      \frac{f^*(\alpha)}{\rho}
     +
     \frac{\alpha x}{\rho}
, \qquad \forall x\in \mathbb{R}_{++}
\end{equation}
and
\begin{equation}\label{limsup}
\limsup_{x\rightarrow \infty}\frac{v(x)}{x}=0.
\end{equation}
 \end{proposition}
 \begin{proof}
The fact  that $0\leq \hat{ v}\leq v$ was already noticed in \eqref{eq:2017-04-07:03}.
We show the remaining inequality.
Let $x\in\R_{++}$ and $I\in\mathcal{I}$.
For $R>0$, define the stopping time
$
\hat\tau_R\coloneqq
 \inf\left\{t\geq 0\colon 
X^{x,I}_t\geq  R\right\}.
$ 
Notice that, since $b\in C^1(\R_{++};\R)$ and $b(0)=0$ by Assumption \ref{eq:2017-03-23:00},  mean value theorem yields \begin{equation}\label{qwe}
b(\xi)\leq b(0)+M_b \xi=M_b\xi, \ \ \ \ \forall \xi\in\R,
\end{equation}
where $M_b$ is defined in \eqref{MB}.
Set $\tau_0\coloneqq 0$ and let $t\in\R_{++}$.
Applying It\^o's formula 
 to $\varphi(s,X^{x,I}_s)\coloneqq e^{-\rho s}X^{x,I}_s$,
 $s\in [0,\hat\tau_R)$,
 taking expectations
after  considering that $X_s^{x,I}\in(0,R)$ for  $s\in[0,\hat\tau_R)$,  summing up over $n\in\N$, and using \eqref{qwe} 
and \ref{ass:rho}, we get 
\begin{equation*}
  \begin{split}
      \mathbb{E}\left[e^{-\rho t} X^{x,I}_{t\wedge \hat\tau_R}\right]=
      &x- \rho
\int_0^t 
 e^{-\rho s} \mathbb{E} \left[ \mathbf{1}_{[0, \hat\tau_R]}(s) X_s^{x,I}\right]ds+\int_0^te^{-\rho s}
\mathbb{E} \left[ \mathbf{1}_{[0, \hat\tau_R]}(s)
 b(X^{x,I}_s)  \right] ds
  +
  e^{-\rho t}\mathbb{E} \left[ 
    \sum_{n\geq 1, \,\tau_n\leq t\wedge \hat\tau_R}
i_n
     \right] \\
\leq&
x+(M_b-\rho)\int_0^t  e^{-\rho s}
\mathbb{E} \left[
\mathbf{1}_{[0, \hat\tau_R]}(s) X^{x,I}_s\right] ds
  +  e^{-\rho t}
\mathbb{E} \left[ \sum_{n\geq 1, \, \tau_n\leq t\wedge \hat\tau_R}
i_n
     \right]\\
\leq&
x
  +  e^{-\rho t}
\mathbb{E} \left[ \sum_{n\geq 1, \, \tau_n\leq t\wedge \hat\tau_R}
i_n
     \right].
   \end{split}
 \end{equation*}
By Fatou's lemma,  letting $R\rightarrow \infty$ and observing that $\tau_{R} \rightarrow \infty$ $\P$-a.s. ,  we get
\begin{equation}
  \label{eq:2017-09-18:00}
        \mathbb{E}\left[e^{-\rho t} X^{x,I}_t\right]\leq 
x
  +  e^{-\rho t}
\mathbb{E} \left[ \sum_{n\geq 1, \, \tau_n\leq t}
i_n
     \right].
\end{equation}
By integrating the second term on the right-hand side of
\eqref{eq:2017-09-18:00}, we have using Fubini-Tonelli's Theorem (as all the integrands involved are nonnegative)
\begin{equation}\label{2017-09-18:01}
    \mathbb{E} \left[ \int_0^\infty  \left( e^{-\rho t} \sum_{n\geq 1,\,
        \tau_n\leq t} 
      i_n  \right) dt
    \right]=
    \mathbb{E} \left[ 
      \sum_{n\geq 1}
      \left( 
      \int_{\tau_n}^\infty e^{-\rho (t-\tau_n)}
      dt
       \right)
       e^{-\rho \tau_n}
      i_n
    \right]
    =
      \frac{1}{\rho}
    \mathbb{E}
    \left[ 
      \sum_{n\geq 1}
e^{-\rho \tau_n }i_n
    \right] .
\end{equation}
Therefore, taking into account
\eqref{eq:2017-09-18:00}, \eqref{2017-09-18:01} and
 \eqref{eq:2015-12-18:01}, we have
\begin{equation}
  \label{eq:2017-03-25:03}
      \mathbb{E}\left[\int_0^\infty e^{-\rho t}X^{x,I}_t dt\right]\leq
\frac{1}{\rho}       \left( 
         x
         +
         \mathbb{E} \left[ \sum_{n\geq 1}
    e^{-\rho \tau_n}i_n
     \right]
     \right) <\infty.
\end{equation}
Now let $\alpha>0$. By definition of $f^*$
and by
\eqref{eq:2017-03-25:03}, we can write
\begin{equation*}
  \begin{split}
    \mathbb{E}
     \left[ \int_0^\infty e^{-\rho t}f(X^{x,I}_t)dt
       -
       \sum_{n\geq 1}e^{-\rho \tau_n}(c_0i_n+c_1) \right] 
   &  \leq
    \mathbb{E}
     \left[ \int_0^\infty e^{-\rho t}
      \left(   f^* (\alpha)+\alpha 
       X^{x,I}_t \right) 
       dt
       -
       \sum_{n\geq 1}e^{-\rho \tau_n}(c_0i_n+c_1) \right]      \\
     &
     \leq
     \frac{f^*(\alpha)}{\rho}
     +
     \frac{\alpha x}{\rho}
     + \left( \frac{\alpha}{\rho}-c_0 \right) 
         \mathbb{E} \left[ \sum_{n\geq 1}
    e^{-\rho \tau_n}i_n
     \right].
  \end{split}
\end{equation*}
By arbitrariness of $I\in\mathcal{I}$, if $\alpha\in \left(0,c_0\rho\right]$,  the latter provides the last inequality in \eqref{estimateB}.

Take now $\alpha\in (0,c_0\rho]$. By \eqref{estimateB} we have
$$
0\leq {\displaystyle{\limsup_{x\rightarrow \infty}\frac{v(x)}{x}}}\leq \alpha\, {\displaystyle{\limsup_{x\rightarrow \infty}\frac{v(x)}{\alpha x}\leq  \alpha \,\limsup_{x\to\infty}\left\{\frac{f^*(\alpha)}{\alpha \rho x}
     +
     \frac{1}{\rho}\right\}=\frac{\alpha}{\rho}}}
$$
By arbitrariness of $\alpha$ we get \eqref{limsup}.
\end{proof}

\begin{assumption}\label{ass:ass3}
The following conditions hold true.
\begin{enumerate}[(i)]
  \item\label{2017-09-25:01} $\rho>\max\big\{B_0, C_0\big\}$ where $B_0,C_0$ are the constants defined in Lemma~\ref{2017-09-27:01}. 
\item\label{all} For each $\beta>0$,
  \begin{equation}
    \label{eq:2017-09-15:01}
  \  M(\beta)\coloneqq   
  \mathbb{E}
    \left[
      \int_0^\infty
      e^{-\rho t}
      \left( 
        f'(X_t^{\beta,\emptyset}) \right) ^2
      dt
    \right]     
  <\infty.
\end{equation}
\item\label{all2}
For each $\eta>0$,
the function  $f$ is semiconvex on $[\eta, \infty)$.
Precisely, there exists a nonincreasing function $K_0\colon \mathbb{R}_{++}\rightarrow \mathbb{R}_{++}$ such that
  \begin{equation}
    \label{eq:2017-09-15:02}
    f(\lambda x+(1-\lambda)y)- \lambda f(x)-(1-\lambda)f(y)
    \leq K_0(\eta) \lambda(1-\lambda) (y-x)^2, \ \ \ \ \forall \lambda\in[0,1], \ \forall x,y\in[\beta,\infty).
  \end{equation}
\item\label{2017-09-25:00} The function $K_0$ in  (\ref{all2}) is such that, for each $\beta>0$,
  \begin{equation}
    \label{eq:2017-09-15:01B}
   \hat{ M}(\beta)\coloneqq   
    \mathbb{E}
    \left[
      \int_0^\infty
      e^{-\rho t}
      \left( 
        K_0(X_t^{\beta,\emptyset}) \right) ^2
      dt
    \right]      
  <\infty.
\end{equation}
\end{enumerate}

\end{assumption}

\begin{Remark}\label{rem:semi}
Semiconvex functions are functions that can be written as difference of a convex function and  a quadratic one (see \cite[Prop.\,1.1.3]{CS} or \cite[Ch.\,4, Sec.\,4.2]{YongZhou}). Moreover, a  function $\varphi\in C^2([\beta,\infty);\R)$ verifies \eqref{eq:2017-09-15:02} with $K_0(\eta):= -2\,\inf_{[\eta,\infty)} f''$ (see again \cite[Prop.\,1.1.3]{CS}).
\end{Remark}
The following Proposition shows that power functions satisfy Assumption \ref{ass:ass3}(ii)--(iv). 
\begin{proposition}\label{prop:ass} Let
$f\in C^2(\R_{++};\R)$ such that $f'>0$, $f''<0$, and  
\begin{equation}\label{aqw}
f'(\xi)\leq C_0(1+|\xi|^{\gamma-1}), \ \ \  f''(\xi)\geq -C_0(1+|\xi|^{\gamma-2}) \ \ \ \forall \xi\in\R_{++}
\end{equation}
for some $C_0> 0$ and $\gamma\in(0,1)$,
and let $\rho>L_b(1-\gamma)+\frac{1}{2}L_\sigma^2(1-\gamma)(2-\gamma)$. Then $f$ satisfies Assumptions \ref{ass:ass3}(ii)--(iv).
\end{proposition}
\begin{proof}
 Let $\beta\in\R_{++}$ and observe that, by Assumption \ref{eq:2017-03-23:00}, we have
 $$
 |b(\xi)|\leq L_b|\xi|, \ \ \ |\sigma(\xi)|\leq L_\sigma|\xi| \ \ \ \ \ \forall \xi\in\R.
 $$. With a localization procedure similar to the one of the prof of Proposition \ref{prop:preB} (now keeping the process $X^{\beta,\emptyset}$ away from $0$), we get from It\^o's formula
\begin{eqnarray*}
&&\E\left[e^{-\rho t}\big|X^{\beta,\emptyset}_t\big|^{\gamma-1}\right]\\&&= |\beta|^{\gamma-1}+\E\left[\int_0^t e^{-\rho s} \left[-\rho \big|X^{\beta,\emptyset}_s\big|^{\gamma-1}+(\gamma-1)\big|X^{\beta,\emptyset}_s\big|^{\gamma-2} b(X_s^{\beta,\emptyset}) +
 \frac{1}{2} (\gamma-1)(\gamma-2)\big|X^{\beta,\emptyset}_s\big|^{\gamma-3} \sigma^2(X_s^{\beta,\emptyset})
\right] ds\right]\\
&&\leq |\beta|^{\gamma-1}+\E\left[\int_0^t e^{-\rho s}\left[-\rho  \big|X^{\beta,\emptyset}_s\big|^{\gamma-1}+{L}_b(1-\gamma)\,\big|X^{\beta,\emptyset}_s\big|^{\gamma-1} +
 \frac{1}{2} {L}^2_\sigma (1-\gamma)(2-\gamma)\big|X^{\beta,\emptyset}_s\big|^{\gamma-1} 
\right] ds\right].
\end{eqnarray*}
Then Assumption \ref{ass:ass3}(ii) follows from \eqref{aqw} and  Gronwall's Lemma applied to the inequality above. 

Moreover, note that, since $\xi\mapsto -C_0(1+|\xi|^{\gamma-2})$ is negative and increasing, by Remark \ref{rem:semi} and \eqref{aqw} we obtain that $f$ verifies Assumption \ref{ass:ass3}(iii) with 
\begin{equation}\label{alal}
K_0(\eta):=-2\gamma(\gamma-1)\eta^{\gamma-2} \ \ \ \forall x\in\R_{++}.
\end{equation}

Finally, similarly as above,
 we have
\begin{eqnarray*}
&&\E\left[e^{-\rho t}\big|X^{\beta,\emptyset}_t\big|^{\gamma-2}\right]\\&&= |\beta|^{\gamma-2}+\E\left[\int_0^t e^{-\rho s} \left[-\rho \big|X^{\beta,\emptyset}_s\big|^{\gamma-2}+(\gamma-2)\big|X^{\beta,\emptyset}_s\big|^{\gamma-3} b(X_s^{\beta,\emptyset}) +
 \frac{1}{2} (\gamma-2)(\gamma-3)\big|X^{\beta,\emptyset}_s\big|^{\gamma-4} \sigma^2(X_s^{\beta,\emptyset})
\right] ds\right]\\
&&\leq |\beta|^{\gamma-2}+\E\left[\int_0^t e^{-\rho s}\left[-\rho  \big|X^{\beta,\emptyset}_s\big|^{\gamma-2}+{L}_b(1-\gamma)\,\big|X^{\beta,\emptyset}_s\big|^{\gamma-1} +
 \frac{1}{2} {L}_\sigma^2 (1-\gamma)(2-\gamma)\big|X^{\beta,\emptyset}_s\big|^{\gamma-1} 
\right] ds\right].
\end{eqnarray*}
Then Assumption \ref{ass:ass3}(iv) follows  from Gronwall's Lemma applied to the inequality above and from  \eqref{alal}. 
\end{proof}
\begin{Remark}\label{rem:ass}
Note that, if $\rho$ satisfies Assumption \ref{ass:ass3}(i), then it also satisfies  the requirement of Proposition \ref{prop:ass}.
\end{Remark}
\begin{proposition}\label{prop:semiconvex} Let Assumption \ref{ass:ass3} hold.
Then  $v$ is semiconvex on $[\beta,\infty)$ for each $\beta>0$, i.e., for each $\beta >0$  there exists $K_1(\beta)>0$ such that
    \begin{equation}
      \label{eq:2017-09-15:05}
      v(\lambda x+(1-\lambda)y)-\lambda v(x)-(1-\lambda) v(y)\leq K_1(\beta) \lambda(1-\lambda)(x-y)^2\qquad  \forall  \lambda\in [0,1],\ 
\forall x,y\in [\beta,\infty).
    \end{equation}
\end{proposition}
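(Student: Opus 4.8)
The plan is to show that \emph{each} functional $J(\cdot,I)$, $I\in\mathcal I$, is semiconvex on $[\beta,\infty)$ with a constant $K_1(\beta)$ that does \emph{not} depend on $I$; the assertion then follows at once by taking the supremum over $I\in\mathcal I$ in
\[
J(\lambda x+(1-\lambda)y,I)\le\lambda J(x,I)+(1-\lambda)J(y,I)+K_1(\beta)\lambda(1-\lambda)(x-y)^2,
\]
since $J(x,I)\le v(x)<\infty$ and $J(y,I)\le v(y)<\infty$ by Proposition~\ref{prop:preB}. So I would fix $\beta>0$, $\lambda\in(0,1)$ and $x,y\in[\beta,\infty)$ with $x\le y$ (no loss of generality, by symmetry), set $z\coloneqq\lambda x+(1-\lambda)y\in[\beta,y]$, and for $I\in\mathcal I$ write $U\coloneqq X^{x,I}$, $V\coloneqq X^{y,I}$, $Z\coloneqq X^{z,I}$, $W_t\coloneqq\lambda U_t+(1-\lambda)V_t$ and $G_t\coloneqq X^{\beta,\emptyset}_t$. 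By \eqref{ineq} one has $U_t\le Z_t\le V_t$ and $U_t\le W_t\le V_t$; by \eqref{ineq} together with a comparison of $X^{\beta,I}$ with the uncontrolled flow via \eqref{monotone} one also has $U_t\ge G_t$; and $G_t\in\R_{++}$ for all $t$, $\P$-a.s., because $0$ is a natural boundary for $X^{\beta,\emptyset}=Z^{0,\beta}$ (Appendix). Since the fixed and proportional costs in \eqref{fun} do not depend on the starting point and the jumps $i_n$ enter $Z$ and $W$ identically,
\[
J(z,I)-\lambda J(x,I)-(1-\lambda)J(y,I)=\mathbb E\!\left[\int_0^\infty e^{-\rho t}\big(A_t+A'_t\big)\,dt\right],\qquad A_t\coloneqq f(Z_t)-f(W_t),\ \ A'_t\coloneqq f(W_t)-\lambda f(U_t)-(1-\lambda)f(V_t).
\]

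For $A'_t$: $f$ is concave on $\R_{++}$ (Assumption~\ref{2016-11-06:00}), so $A'_t\ge0$, and applying the semiconvexity bound \eqref{eq:2017-09-15:02} with floor $G_t$ (legitimate, since $U_t,V_t,W_t\ge G_t>0$ and $W_t=\lambda U_t+(1-\lambda)V_t$) gives $0\le A'_t\le K_0(G_t)\,\lambda(1-\lambda)(V_t-U_t)^2$. For $A_t$: since $Z_t,W_t\in[G_t,\infty)$ and $f'>0$ is decreasing, the mean value theorem yields $|A_t|\le f'(G_t)\,|Z_t-W_t|$. It then remains to estimate the processes $V-U$ and $D\coloneqq Z-W$, both continuous because between consecutive $\tau_n$ they solve It\^o SDEs and at each $\tau_n$ the jumps $i_n$ cancel: $V-U$ solves a linear SDE with coefficients bounded by $L_b,L_\sigma$ and initial datum $y-x\ge0$, while $D$ solves a linear SDE with the same Lipschitz bounds, zero initial datum, and inhomogeneous terms $R^b_t\coloneqq b(W_t)-\lambda b(U_t)-(1-\lambda)b(V_t)$, $R^\sigma_t\coloneqq\sigma(W_t)-\lambda\sigma(U_t)-(1-\lambda)\sigma(V_t)$ satisfying $|R^b_t|\le\tfrac12\tilde L_b\,\lambda(1-\lambda)(V_t-U_t)^2$ and $|R^\sigma_t|\le\tfrac12\tilde L_\sigma\,\lambda(1-\lambda)(V_t-U_t)^2$, because $b,\sigma\in C^1(\R_+)$ with $b',\sigma'$ Lipschitz (Assumption~\ref{eq:2017-03-23:00}). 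A discounted energy estimate (It\^o's formula for $e^{-\rho t}(\,\cdot\,)^2$ after a localization removing the stochastic integrals), available since $\rho>\max\{B_0,C_0\}$ by Assumption~\ref{ass:ass3}\eqref{2017-09-25:01} (cf.\ Lemma~\ref{2017-09-27:01}), then produces constants $c_1,c_2,c_3>0$ depending only on $\rho,L_b,L_\sigma,\tilde L_b,\tilde L_\sigma$ (in particular not on $I$) such that
\[
\mathbb E\!\left[\int_0^\infty e^{-\rho t}(V_t-U_t)^4\,dt\right]\le c_1(x-y)^4,\qquad \mathbb E\!\left[\int_0^\infty e^{-\rho t}D_t^2\,dt\right]\le c_2\,\mathbb E\!\left[\int_0^\infty e^{-\rho t}\big((R^b_t)^2+(R^\sigma_t)^2\big)\,dt\right]\le c_3\,\lambda^2(1-\lambda)^2(x-y)^4.
\]

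Finally, the Cauchy--Schwarz inequality with respect to $e^{-\rho t}\,dt\otimes\P$, the definition \eqref{eq:2017-09-15:01} of $M(\beta)$, and the bound on $|A_t|$ give
\[
\mathbb E\!\left[\int_0^\infty e^{-\rho t}|A_t|\,dt\right]\le M(\beta)\Big(\mathbb E\!\left[\int_0^\infty e^{-\rho t}D_t^2\,dt\right]\Big)^{1/2}\le\sqrt{c_3}\;M(\beta)\,\lambda(1-\lambda)(x-y)^2,
\]
while Cauchy--Schwarz, the bound on $A'_t$, and the definition \eqref{eq:2017-09-15:01B} of $\hat M(\beta)$ (with $G_t=X^{\beta,\emptyset}_t$ being exactly the process appearing there) give
\[
\mathbb E\!\left[\int_0^\infty e^{-\rho t}A'_t\,dt\right]\le\lambda(1-\lambda)\,\hat M(\beta)\Big(\mathbb E\!\left[\int_0^\infty e^{-\rho t}(V_t-U_t)^4\,dt\right]\Big)^{1/2}\le\sqrt{c_1}\;\hat M(\beta)\,\lambda(1-\lambda)(x-y)^2.
\]
Adding the two estimates yields $J(z,I)-\lambda J(x,I)-(1-\lambda)J(y,I)\le K_1(\beta)\lambda(1-\lambda)(x-y)^2$ with $K_1(\beta)\coloneqq\sqrt{c_3}\,M(\beta)+\sqrt{c_1}\,\hat M(\beta)$, finite by Assumption~\ref{ass:ass3} and independent of $I$, and taking $\sup_{I\in\mathcal I}$ gives \eqref{eq:2017-09-15:05}. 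I expect the core of the argument to be the two discounted moment bounds: one must check that all constants stay uniform in the control $I$ — which works because only the $I$-independent Lipschitz constants of $b,\sigma,b',\sigma'$ enter the linear SDEs for $V-U$ and $Z-W$ — that $\rho$ lies above the exponential growth rates $B_0,C_0$ of these moments, and that the forcing terms $R^b,R^\sigma$ are genuinely of quadratic order in $V_t-U_t$, which is precisely where the $C^{1,1}$-regularity of $b$ and $\sigma$ is indispensable.
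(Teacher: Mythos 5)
Your proposal is correct and follows essentially the same route as the paper: the identical decomposition of $J(z,I)-\lambda J(x,I)-(1-\lambda)J(y,I)$ into the term $f(X^{z,I})-f(\Sigma)$ (controlled via the mean value theorem, $f'$ decreasing, Cauchy--Schwarz and $M(\beta)$) and the term $f(\Sigma)-\lambda f(X^{x,I})-(1-\lambda)f(X^{y,I})$ (controlled via the semiconvexity of $f$ with floor $X^{\beta,\emptyset}$, Cauchy--Schwarz and $\hat M(\beta)$), together with the discounted moment bounds of Lemma~\ref{2017-09-27:01}, which you re-derive inline in integrated form. Your passage from $J(\cdot,I)$ to $v$ by taking the supremum over $I$ with an $I$-uniform constant is equivalent to the paper's $\delta$-optimal-control argument.
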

\begin{proof}
Fix $\beta>0$. Let
$x,y\in[\beta, \infty)$ with $x\leq y$, and $I\in \mathcal{I}$. For each  $\lambda\in [0,1]$ set
  $z_\lambda \coloneqq \lambda x+(1-\lambda)y$
%
and $\Sigma^{\lambda,x,y,I}\coloneqq \lambda X^{x,I}+(1-\lambda)X^{y,I}$. We write
\begin{multline*}
J(z_\lambda,I)
-\lambda J(x,I)
-(1-\lambda)J(y,I)=
    \mathbb{E} \left[ \int_0^\infty
      e^{-\rho t}
       \left( 
         f(X^{z_\lambda,I}_t)
         -\lambda         f(X^{x,I}_t)
         -(1-\lambda)         f(X^{y,I}_t)
       \right) dt
    \right] =\\
  \begin{split}
=    \mathbb{E} \left[ \int_0^\infty
      e^{-\rho t}
       \left( 
         f(X^{z_\lambda,I}_t)
         -         f(\Sigma^{\lambda,x,y,I}_t)
       \right) dt
    \right]   
    &+   \mathbb{E} \left[ \int_0^\infty
      e^{-\rho t}
       \left( 
         f(\Sigma^{\lambda,x,y,I}_t)
         -\lambda         f(X^{x,I}_t)
         -(1-\lambda)         f(X^{y,I}_t)
       \right) dt
    \right].   \\
    {
   \rotatebox[origin=c]{90}{$\coloneqq $}
   \atop\mathbf{A}
   }
\hskip2.43cm
&
\phantom{+}
\hskip4cm   
{
   \rotatebox[origin=c]{90}{$\coloneqq $}
   \atop\mathbf{B}
   }
  \end{split}
\end{multline*}
\noindent
  Applying H\"older's inequality,
observing that $X^{\beta,\emptyset}\leq X^{z_\lambda,I}\wedge \Sigma^{\lambda,x,y,I}$,
 that $f'$ is decreasing,
 using 
Assumption~\ref{ass:ass3}(\ref{2017-09-25:01}),
and using
Lemma~\ref{2017-09-27:01}\emph{(\ref{2017-11-13:01})},
 we write
\begin{equation*}
  \begin{split}
    \mathbf{A} &\leq \mathbb{E} \left[ \int_0^\infty e^{-\rho t}
      f'(X^{\beta,\emptyset}_t) \left| X^{z_\lambda,I}_t - \Sigma^{\lambda,x,y,I}_t \right|dt
    \right]\\
    & \leq
    \left( \mathbb{E} \left[ \int_0^\infty e^{-\rho t}
          \left(  f'(X^{\beta,\emptyset}_t)  \right)^2 dt  \right] \right)
    ^{1/2} \left( \mathbb{E} \left[ 
\int_0^\infty e^{-\rho t}
          \left| X^{z_\lambda,I}_t -\Sigma^{\lambda,x,y,I}_t \right|^2 dt  \right]
    \right) ^{1/2}\\
    &\leq
    M(\beta)^{1/2}
    \left( \mathbb{E} \left[
        \int_0^\infty e^{-\rho t}
     \left|
          X^{z_\lambda,I}_t -\Sigma^{\lambda,x,y,I}_t \right|^2 
    dt\right] \right) ^{1/2}\\
    &\leq M(\beta)^{1/2}
     \left( \int_0^\infty e^{-\rho t}
       A_0e^{B_0t}
       dt \right) ^{1/2} \lambda(1-\lambda)|x-y|^2 \\
&=
\frac{A_0^{1/2}M(\beta)}{(\rho-B_0)^{1/2}} \lambda(1-\lambda)|x-y|^2.
  \end{split}
\end{equation*}
Moreover,
by Assumption~\ref{ass:ass3}(\ref{2017-09-25:01}),(\ref{all2}),(\ref{2017-09-25:00}), 
again using H\"older's inequality
and applying
Lemma~\ref{2017-09-27:01}\emph{(\ref{2017-11-13:00})},
we have 
\begin{equation*}
  \begin{split}
    \mathbf{B} &\leq \lambda(1-\lambda) \mathbb{E} \left[
      \int_0^\infty e^{-\rho t} K_0(X^{\beta,\emptyset}_t) \left| X_t^{y,I} -
        X_t^{x,I} \right|^2 dt
    \right] \\
    &\leq \lambda(1-\lambda) \left( \mathbb{E} \left[
          \int_0^\infty e^{-\rho t}
 \left( K_0(X^{\beta,\emptyset}_t)
  \right) ^2 dt
       \right] \right) ^{1/2} \left( \mathbb{E} \left[
        \int_0^\infty e^{-\rho t} \left| X^{y,I}_t - X^{x,I}_t
        \right|^4 dt \right]
    \right)^{1/2}\\
    &\leq
     \lambda(1-\lambda)
\hat{M}(\beta)^{1/2}
    \left(
 \int_0^\infty e^{-\rho t}
e^{C_0t}
  dt
    \right)^{1/2}|x-y|^2
\\
    &=
\frac{    \hat{ M}(\beta)^{1/2}}{(\rho-C_0)^{1/2}}\lambda(1-\lambda)|x-y|^2.
  \end{split}
\end{equation*}
Now let
$\delta>0$ and let $I$ be  
 such that $v(z)\leq J(z,I)+\delta$.
The inequalities above provide
\begin{equation*}
 \begin{split} v(z)-\delta 
-\lambda v(x)-(1-\lambda)v(y)&\leq J(z,I) -\lambda J(x,I)-(1-\lambda)J(y,I)\\
&\leq  K_1(\beta)\lambda(1-\lambda)|x-y|^2\qquad \forall x,y\geq \beta, \ \forall \lambda\in [0,1],
\end{split}
\end{equation*}
where $K_1(\beta)\coloneqq \frac{    \hat{ M}(\beta)}{(\rho-C_0)^{1/2}}+\frac{   A_0^{1/2} \hat{ M}(\beta)}{(\rho-B_0)^{1/2}}$.
We then obtain
\eqref{eq:2017-09-15:05} by arbitrariness of $\delta$.
\end{proof}

\noindent  In view of the fact that the results which follow rely on the semiconvexity of $v$, Assumption~\ref{ass:ass3}
will be standing for the remaining of this section and in Sections \ref{sec:DPE}, \ref{sec:exp}, \ref{sec:opt}.
\bigskip


Define the  space
\begin{equation}\label{defL}
 \lipcz{\mathbb{R}_{++}}\coloneqq  \left\{
u\colon \mathbb{R}_{++}\rightarrow\mathbb{R}\ 
\mbox{locally Lipschitz continuous on } \R_{++}, \ 
\mbox{s.t.}\
\limsup_{x\rightarrow \infty} \frac{u(x)}{x}<c_0
  \right\}. 
\end{equation}
We recall that semiconvex functions on open sets
are locally Lipschitz.
So, by Propositions~\ref{prop:preB} and \ref{prop:semiconvex}, we have 
$v\in \lipcz{\mathbb{R}_{++}}$.
The  space $ \lipcz{\mathbb{R}_{++}}$
 will be used in the next section.

\section{Dynamic Programming}\label{sec:DPE}
 The dynamic programming equation associated to our dynamic optimization problem is the quasi-variational inequality (see, e.g., \cite{BeLio})
\begin{equation}\label{QVI}\tag{\bf QVI}
\min\big\{\mathcal{L}u-{f},\ u-\mathcal{M}u\big\}=0,
\end{equation}
where $\mathcal{L}$ and $\mathcal{M}$ are  operators 
formally
 defined by
\begin{equation}\label{A}
\mathcal{L}u(x)\coloneqq \rho u(x)-b(x) u'(x)-\frac{1}{2}\sigma^2(x)u''(x),\qquad x\in \R_{++},
\end{equation}
\begin{equation}\label{M}
\mathcal{M}u(x)\coloneqq  \sup_{i>0}\left\{u(x+i)-c_0i-c_1\right\},\qquad x\in \mathbb{R}_{++}.
\end{equation}
We note that $\mathcal{L}$ is a differential operator, so it has a local nature, while $\mathcal{M}$ is a functional operator having a nonlocal nature.


\subsection{Continuation and action region}

Here we define and study the first properties of the  \emph{continuation} and \emph{action} region in the state space $\R_{++}$.
\begin{lemma}\label{lemma:M}
$\calm$ maps $\lipcz{\mathbb{R}_{++}}$ into itself.
\end{lemma}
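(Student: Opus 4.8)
The plan is to show two things about $\mathcal{M}u$ for $u\in\lipcz{\mathbb{R}_{++}}$: first, that it is locally Lipschitz continuous on $\mathbb{R}_{++}$; second, that $\limsup_{x\to\infty}\mathcal{M}u(x)/x<c_0$. Fix $u\in\lipcz{\mathbb{R}_{++}}$ and write $c\coloneqq\limsup_{x\to\infty}u(x)/x<c_0$; pick $c'\in(c,c_0)$, so that there is $R>0$ with $u(y)\le c'y$ for all $y\ge R$.

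For local Lipschitz continuity, I would first argue that the supremum in $\mathcal{M}u(x)=\sup_{i>0}\{u(x+i)-c_0 i-c_1\}$ is effectively taken over a bounded set of arguments $x+i$: since $u(x+i)-c_0 i\le c'(x+i)-c_0 i=c'x-(c_0-c')i$ for $x+i\ge R$, the quantity $u(x+i)-c_0 i-c_1\to-\infty$ as $i\to\infty$, uniformly for $x$ in a bounded set; meanwhile $\mathcal{M}u(x)\ge u(x+1)-c_0-c_1$ is bounded below on bounded sets. Hence on any interval $[\beta,\Lambda]\subset\mathbb{R}_{++}$ there is $\Gamma>\Lambda$ such that $\mathcal{M}u(x)=\sup_{i\in(0,\Gamma-x]}\{u(x+i)-c_0i-c_1\}=\sup_{y\in[x,\Gamma]}\{u(y)-c_0 y-c_1\}+c_0 x$. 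Writing $g(y)\coloneqq u(y)-c_0 y-c_1$, which is locally Lipschitz on $\mathbb{R}_{++}$ (as $u$ is), and noting $\mathcal{M}u(x)=c_0 x+\sup_{y\in[x,\Gamma]}g(y)$, the map $x\mapsto\sup_{y\in[x,\Gamma]}g(y)$ is a sup of $g$ over a moving interval; using the local Lipschitz bound $\mathrm{Lip}(g;[\beta,\Gamma])=:L$ one checks directly that $|\sup_{y\in[x_1,\Gamma]}g(y)-\sup_{y\in[x_2,\Gamma]}g(y)|\le L|x_1-x_2|$ for $x_1,x_2\in[\beta,\Lambda]$, whence $\mathcal{M}u$ is Lipschitz on $[\beta,\Lambda]$ with constant $c_0+L$. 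This gives local Lipschitz continuity on all of $\mathbb{R}_{++}$.

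For the growth condition, I would estimate for $x\ge R$: for any $i>0$, if $x+i\ge R$ (automatic here) then $u(x+i)-c_0 i-c_1\le c'(x+i)-c_0 i-c_1=c'x-(c_0-c')i-c_1\le c'x-c_1\le c'x$. Taking the supremum over $i>0$ gives $\mathcal{M}u(x)\le c'x$ for $x\ge R$, hence $\limsup_{x\to\infty}\mathcal{M}u(x)/x\le c'<c_0$. Combining the two parts yields $\mathcal{M}u\in\lipcz{\mathbb{R}_{++}}$.

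The main obstacle is the local Lipschitz estimate for the moving-interval supremum, i.e., verifying that $x\mapsto\sup_{y\in[x,\Gamma]}g(y)$ inherits the Lipschitz constant of $g$ on the enclosing interval; this is elementary but needs the preliminary reduction showing the effective range of $y=x+i$ is bounded (so that $g$'s local Lipschitz constant on a compact set can be invoked) — that reduction, in turn, is exactly where the strict inequality $c<c_0$ is used, and it is the one slightly delicate point since without it the supremum could be attained ``at infinity'' and the argument would break down.
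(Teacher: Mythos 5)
Your proposal is correct and follows essentially the same route as the paper's proof: both parts use the strict inequality $\limsup_{x\to\infty}u(x)/x<c_0$ first to get the linear growth bound on $\mathcal{M}u$ and then to truncate the supremum to a compact range of arguments, after which the local Lipschitz constant of $u$ on an enlarged compact interval is transferred to $\mathcal{M}u$. Your change of variables $y=x+i$ and the moving-interval supremum of $g(y)=u(y)-c_0y-c_1$ is just a repackaging of the paper's sandwich estimate over $i\in(0,R]$.
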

\begin{proof}
Let $u\in \lipcz{\mathbb{R}_{++}}$.
Then there exists $\overline x ,\epsilon>0$ such that
\begin{equation}\label{2016-11-06:03}
  \frac{u(x)}{x}-c_0\leq -\epsilon\qquad \forall x\geq \overline x .
\end{equation}
By \eqref{2016-11-06:03}, for all $i>0$, $x\geq \overline x$, we have
\begin{equation*}
  u(x+i)-(c_0i+c_1)=(x+i) \left( \frac{u(x+i)}{x+i}-c_0 \right) +c_0x-c_1\leq (c_0-\epsilon) x.
\end{equation*}
Hence, by taking the supremum over $i>0$,
\begin{equation*}
  \frac{\mathcal{M}u(x)}{x}\leq c_0-\epsilon\qquad\forall x\geq \overline x,
\end{equation*}
which shows that ${\displaystyle\limsup_{x\rightarrow \infty}\frac{\mathcal{M}u(x)}{x}< c_0}$.

Now we show that $\mathcal{M}u$ is Lipschitz continuous on $[M^{-1},M]$ for each $M>0$. 
Using \eqref{2016-11-06:03} one can show that
\begin{equation}\label{2017-09-24:01} 
  \limsup_{i\rightarrow +\infty}\sup_{x\in[M^{-1},M]}
 \big\{ u(x+i)-c_0i\big\}=-\infty.
\end{equation}
Set
$$
U(x)\coloneqq \sup\big\{i\in  \mathbb{R}_{++}: u(x+i)-c_0i\geq u(x)-1\big\} \ \ \forall x\in [M^{-1},M].
$$
The limit \eqref{2017-09-24:01} provides that there exists $R>0$ such that
\begin{equation*}
U(x)\leq R \ \ \ \forall x\in[M^{-1},M].
\end{equation*}
Hence,  we have
\begin{equation}
  \label{eq:2017-09-24:02}
  \mathcal{M}u(x)=\sup_{i\in (0,R]} \{u(x+i)-c_0i-c_1\}\qquad \forall x\in [M^{-1},M].
\end{equation}
Now let $\hat L$ be the Lipschitz constant of  $u|_{[M^{-1},M+R]}$.
Then, if $M^{-1}\leq x<y\leq M$, $0<i\leq R$,
  we can write
  \begin{equation}
    \label{eq:2017-11-13:02}
    u(x+i)-(c_0i+c_1)- \hat L(y-x) \leq  u(y+i)-(c_0i+c_1)\leq  u(x+i)-(c_0i+c_1)+ \hat L(y-x).
  \end{equation}
  Now
the claim follows
by taking the supremum over $i\in (0,R]$
on \eqref{eq:2017-11-13:02} and recalling~\eqref{eq:2017-09-24:02}.
\end{proof}

By definition of $v$ we have
\begin{equation}\label{DPPPP}
v(x)\geq v(x+i)-c_0i-c_1 \ \ \ \forall i>0,
\end{equation}
hence
\begin{equation}
  \label{disM}
v\geq \mathcal{M}v.
\end{equation}
We define the \textit{continuation region} $\mathcal{C}$ and the \textit{action region} $\mathcal{A}$ by 
\begin{align}
 \label{conregion}
&\mathcal{C}\coloneqq \big\{ x\in \R_{++}\colon  \ \mathcal{M}v(x)<v(x)\big\} &&\qquad \mbox{(continuation region)}\\[5pt]
  \label{actregion}
&\mathcal{A}\coloneqq \R_{++} \setminus \mathcal{C}= \big\{ x\in \R_{++}\colon  \ \mathcal{M}v(x)=v(x)\big\} &&\qquad \mbox{(action region)}.
\end{align}
They will represent, respectively, the region where it will be convenient to let the system evolve autonomously and the region where it wil be convenient to undertake an action by exercising an impulse.
By
Proposition~\ref{prop:preB} and Lemma~\ref{lemma:M}, both members of \eqref{disM} are finite continuous functions.
In particular, 
$\mathcal{C}$ is open and $\mathcal{A}$ is closed in $\R_{++}$.

\smallskip
 For $x\in\mathcal{A}$, let us introduce the set
$$
\Xi(x)\coloneqq \mathop{\operatorname{argmax}}_{i>0}\,\big\{v(x+i)-c_0i-c_1\big\}.
$$
Clearly $\Xi(x)$ is empty if $x\in\mathcal{C}$. In principle $\Xi(x)$ might be empty even if $x\in\mathcal{A}$,  but this is not the case as shown by the following.
\begin{proposition}\label{propempty} Let $x\in \mathcal{A}$. 
  \begin{enumerate}[(i)]
\item\label{2016-11-06:06} 
$\Xi(x)$ is not empty. 
\item\label{2016-11-06:07}  
For all $\xi\in\Xi(x)$, we have
$x+\xi\in\mathcal{C}.$
\end{enumerate}
\end{proposition}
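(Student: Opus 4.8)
The proof proposal splits naturally along the two assertions. For \eqref{2016-11-06:06}, the plan is to show the supremum defining $\mathcal{M}v(x)$ is attained. First I would observe that, since $v\in\lipcz{\mathbb{R}_{++}}$, the growth condition $\limsup_{x\to\infty}v(x)/x<c_0$ gives $\epsilon>0$ and $\overline{x}>0$ with $v(z)/z - c_0 \le -\epsilon$ for $z\ge\overline{x}$; arguing exactly as in the proof of Lemma~\ref{lemma:M} (see \eqref{2017-09-24:01} and \eqref{eq:2017-09-24:02}), this forces $v(x+i)-c_0 i - c_1 \to -\infty$ as $i\to\infty$ and hence $\mathcal{M}v(x)=\sup_{i\in(0,R]}\{v(x+i)-c_0 i-c_1\}$ for a suitable $R=R(x)$. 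The supremum of the continuous function $i\mapsto v(x+i)-c_0 i - c_1$ over the compact set $[0,R]$ is attained, say at $i^*\in[0,R]$; and since $x\in\mathcal{A}$ means $\mathcal{M}v(x)=v(x) \ge v(x+0)-c_0\cdot 0 - c_1 = v(x)-c_1$ is the relevant comparison, we need $i^*>0$: indeed the value at $i=0$ equals $v(x)-c_1 < v(x)=\mathcal{M}v(x)$ since $c_1>0$, so the maximizer cannot be $0$. Therefore $i^*\in(0,R]$ and $i^*\in\Xi(x)$, proving it is nonempty.

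For \eqref{2016-11-06:07}, fix $\xi\in\Xi(x)$ and set $y\coloneqq x+\xi$; I must show $\mathcal{M}v(y)<v(y)$. The natural idea is to argue by contradiction: suppose $y\in\mathcal{A}$, i.e.\ $\mathcal{M}v(y)=v(y)$. By part \eqref{2016-11-06:06} applied at $y$, there exists $\eta>0$ with $v(y+\eta)-c_0\eta-c_1 = \mathcal{M}v(y) = v(y)$. Now I would use the "no double intervention" / superadditivity structure of the impulse operator: combining the two interventions $\xi$ then $\eta$ should be no worse than doing them separately, but the fixed cost $c_1$ is paid only once for the combined jump $\xi+\eta$, which strictly improves the payoff. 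Concretely, from $v(y)=v(x+\xi)=v(x)$ (since $x\in\mathcal{A}$ and $\xi\in\Xi(x)$ give $v(x)=v(x+\xi)-c_0\xi-c_1$, wait — that reads $v(x) = v(y)-c_0\xi-c_1$) I would write
\begin{equation*}
v(x) = v(y) - c_0\xi - c_1 = \big(v(y+\eta)-c_0\eta-c_1\big) - c_0\xi - c_1 = v(x+(\xi+\eta)) - c_0(\xi+\eta) - 2c_1,
\end{equation*}
whence $v(x+(\xi+\eta)) - c_0(\xi+\eta) - c_1 = v(x) + c_1 > v(x) = \mathcal{M}v(x)$, contradicting $\mathcal{M}v(x)=\sup_{i>0}\{v(x+i)-c_0 i - c_1\} \ge v(x+(\xi+\eta))-c_0(\xi+\eta)-c_1$. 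Hence $y\in\mathcal{C}$.

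\textbf{Main obstacle.} The computations above are essentially bookkeeping once the right identities are in place; the only genuine subtlety is making sure the characterization $\mathcal{M}v(x)=\sup_{i\in(0,R]}\{\cdots\}$ with attainment is correctly transported from Lemma~\ref{lemma:M} (which was stated for $u\in\lipcz{\mathbb{R}_{++}}$ on compact $x$-intervals) to the single point $x$ here — this is immediate since $v\in\lipcz{\mathbb{R}_{++}}$ and one may take the compact interval $\{x\}$, or rather $[x,x]$ enlarged slightly. The other point to handle with care is the direction of the strict inequality $v(x)=v(x+\xi)-c_0\xi-c_1$ versus the membership $x\in\mathcal{A}$: one must use that $\xi\in\Xi(x)$ and $x\in\mathcal{A}$ together give $v(x)=\mathcal{M}v(x)=v(x+\xi)-c_0\xi-c_1$, which is exactly the identity feeding the contradiction. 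I expect no deeper difficulty; the argument is purely algebraic given the definitions and the attainment lemma.
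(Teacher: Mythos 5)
Your proposal is correct. Part \emph{(i)} is essentially the paper's argument in different clothing: the paper takes a maximizing sequence, uses the growth bound of Proposition \ref{prop:preB} to extract a convergent subsequence, and rules out the limit $i^*=0$ via $v(x)=\calm v(x)\leq v(x)-c_1$; you instead localize the supremum to a compact interval $[0,R]$ (exactly as in Lemma \ref{lemma:M}) and invoke Weierstrass, ruling out the maximizer $i^*=0$ by the same $c_1>0$ observation. These are interchangeable. Part \emph{(ii)} is where you genuinely diverge. The paper gives a direct one-line estimate: restricting the supremum defining $\calm v(x)$ to $i>\xi$ and reindexing yields $v(x+\xi)-c_0\xi-c_1\geq \calm v(x+\xi)-c_0\xi$, hence $v(x+\xi)\geq \calm v(x+\xi)+c_1>\calm v(x+\xi)$, with no contradiction argument and no appeal to part \emph{(i)}. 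You instead argue by contradiction, applying part \emph{(i)} at $y=x+\xi$ to produce an exact maximizer $\eta\in\Xi(y)$ and then merging the two jumps $\xi$ and $\eta$ into a single jump of size $\xi+\eta$ that saves one fixed cost, contradicting $v(x)=\calm v(x)$. Your route is the more "economic" argument (two consecutive interventions are never optimal because the fixed cost is paid twice) and is perfectly valid; the paper's route is shorter, avoids the dependence of \emph{(ii)} on \emph{(i)}, and in fact delivers the quantitative statement $v\geq \calm v+c_1$ on $x+\Xi(x)$ rather than just the strict inequality. One cosmetic point: the parenthetical "$v(y)=v(x+\xi)=v(x)$" early in your part \emph{(ii)} is false as written (the correct identity, which you then state and use, is $v(x)=v(y)-c_0\xi-c_1$); in a final write-up you should delete that slip so only the corrected identity appears.
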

\begin{proof}
\noindent\emph{(\ref{2016-11-06:06})}
Let $x\in\cala$ and take a sequence $\{i_n\}_{n\in \mathbb{N}\setminus\{0\}}\subset \R_{++}$ such that 
\begin{equation}\label{dsa}
\calm v(x)\geq v(x+i_n)-c_0i_n-c_1\geq \calm v(x)-\frac{1}{n}, \ \ \ \forall n\in \mathbb{N}\setminus\{0\}.
\end{equation}
Then, considering  that  ${\displaystyle{\limsup_{i\rightarrow \infty}\frac{v(x+i)}{x+i}=0}}$ by
Proposition~\ref{prop:preB} and that $\mathcal{M}v(x)$ is finite,
we easily see, arguing by contradiction, that, in order to fulfill \eqref{dsa}, the sequence $\{i_n\}_{n\in\N}$ must be bounded. Hence, 
by considering a subsequence if necessary,
we have $i_n\rightarrow i^*\in\R_+$. Let us show that $i^*>0$. Indeed, assume by contradiction that $i^*=0$. By \eqref{dsa}, taking into account that
$v$ is continuous and that
 $v(x)=\calm v(x)$ as $x\in\mathcal{A}$, we obtain
$v(x)=\calm v(x)\leq v(x)-c_1$, a contradiction.
 Then we have shown that $i^*>0$. 
From \eqref{dsa} we obtain, by continuity,
$\calm v(x) = v(x+i^*)-c_0i^*-c_1$ and the claim follows.

\noindent\emph{(\ref{2016-11-06:07})}  This part of the proof closely follows the proof of \cite[Prop.~2]{GuoWu}. We omit it for brevity.
\end{proof}

Note that, as a consequence of Proposition \ref{propempty}, we have $\mathcal{C}\neq \emptyset$. Indeed, either $\mathcal{A}=\emptyset$, thus $\mathcal{C}=\R_{++}$; or $\mathcal{A}\neq \emptyset$, thus $\mathcal{C}\neq \emptyset$ by Proposition \ref{propempty}(\ref{2016-11-06:07}). Formally, Proposition \ref{propempty}(\ref{2016-11-06:07}) says that, if the system is in a position $x\in\mathcal{A}$:  (i) an optimal control exists (part (\ref{2016-11-06:06})); (ii) this optimal control places the system in $\mathcal{C}$ (part (\ref{2016-11-06:07})). We will verify this fact rigorously afterwards.

\subsection{Dynamic Programming Principle and viscosity solutions}

\smallskip
The rigorous connection between $v$ and \eqref{QVI} passes through the dynamic programming principle (DPP). 
\begin{proposition} \label{Prop:DPP}
For every $x>0$ and every $\F$-stopping time $\tau\in\overline{\R}_+$,
    \begin{equation}\label{2015-11-24:00}\tag{\bf DPP}
v(x) = \sup_{I\in\mathcal{I}} \E\left[\int_0^{\tau}e^{-\rho s} {f}(X^{x,I}_s)ds-\sum_{n\geq 1,\,\tau_n\leq  \tau} e^{-\rho\tau_n}(c_0i_n+c_1)+e^{-\rho \tau} v(X^{x,I}_\tau)\right].
    \end{equation}
\end{proposition}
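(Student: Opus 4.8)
The plan is to prove the Dynamic Programming Principle \eqref{2015-11-24:00} by the standard two-inequality argument, establishing $v(x)\geq(\text{RHS})$ and $v(x)\leq(\text{RHS})$ separately, using the flow property \eqref{eq:2017-09-17:02}, the monotonicity \eqref{ineq}, and the integrability condition \eqref{eq:2015-12-18:01}.

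\medskip
\noindent\textbf{Proof of the DPP (sketch).}
Fix $x>0$ and an $\F$-stopping time $\tau$. Denote by $\Psi(x)$ the right-hand side of \eqref{2015-11-24:00}.

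\emph{Step 1: $v(x)\leq\Psi(x)$.} Let $I=\{(\tau_n,i_n)\}_{n\geq1}\in\mathcal{I}$ be arbitrary. Splitting the integral in $J(x,I)$ at $\tau$ and splitting the cost sum according to whether $\tau_n\leq\tau$ or $\tau_n>\tau$, and then using the flow property \eqref{eq:2017-09-17:02} together with the strong Markov property of the autonomous diffusion (applied to the controlled process after time $\tau$ with initial datum $X^{x,I}_\tau$), we rewrite
\[
J(x,I)=\E\!\left[\int_0^{\tau}e^{-\rho s}f(X^{x,I}_s)\,ds-\sum_{n\geq1,\,\tau_n\leq\tau}e^{-\rho\tau_n}(c_0i_n+c_1)+e^{-\rho\tau}\,\E\!\left[\widetilde J\,\big|\,\mathcal{F}_\tau\right]\right],
\]
where $\widetilde J$ is the conditional payoff from $\tau$ onwards of the shifted control $\theta_\tau I:=\{(\tau_{n+k}-\tau,i_{n+k})\}$ corresponding to the interventions strictly after $\tau$, started from $X^{x,I}_\tau$. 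The inner conditional expectation is, by definition of $v$ (using a regular conditional distribution / measurable selection so that the shifted control remains admissible $\mathbb{P}$-a.s.\ on $\mathcal{F}_\tau$), bounded above by $v(X^{x,I}_\tau)$. Hence $J(x,I)\leq$ the bracket defining $\Psi(x)$; taking the supremum over $I\in\mathcal{I}$ gives $v(x)\leq\Psi(x)$.

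\emph{Step 2: $v(x)\geq\Psi(x)$.} Fix $\delta>0$ and a control $I=\{(\tau_n,i_n)\}$. On the event $\{\tau<\infty\}$ the random variable $X^{x,I}_\tau>0$ is $\mathcal{F}_\tau$-measurable; using a measurable selection theorem we pick a family of controls $\{J^\omega\}$ — equivalently a single admissible control $\hat I$ adapted to the shifted filtration — with $J(X^{x,I}_\tau(\omega),J^\omega)\geq v(X^{x,I}_\tau(\omega))-\delta$. Concatenating $I$ (restricted to interventions $\tau_n\leq\tau$) with $\hat I$ shifted by $\tau$ produces an admissible control $\bar I\in\mathcal{I}$ — admissibility, in particular \eqref{eq:2015-11-23:00} and \eqref{eq:2015-12-18:01}, follows from that of the two pieces — whose payoff, again by \eqref{eq:2017-09-17:02} and the tower property, equals the bracket in \eqref{2015-11-24:00} evaluated at $I$ up to an error $e^{-\rho\tau}\delta\leq\delta$. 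Thus $v(x)\geq J(x,\bar I)\geq(\text{bracket for }I)-\delta$; taking the supremum over $I$ and letting $\delta\downarrow0$ yields $v(x)\geq\Psi(x)$.

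\medskip
\noindent\textbf{Main obstacle.} The only genuinely delicate point is the measurability/selection step (Steps 1 and 2): one must justify that ``optimizing after $\tau$'' can be carried out in a jointly measurable way, so that the resulting shifted strategies are admissible controls in $\mathcal{I}$ and the conditional payoffs are correctly dominated by, resp.\ approximate, $v(X^{x,I}_\tau)$. This is handled by the usual regular-conditional-probability construction and a measurable-selection argument (as in the classical treatments of the DPP for controlled diffusions); the integrability bookkeeping for the concatenated control is routine, relying on \eqref{eq:2015-12-18:01} and the growth bound from Proposition~\ref{prop:preB}. Everything else is a direct computation using the flow property and Fubini's theorem.
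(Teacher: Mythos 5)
Your sketch is the standard two--inequality argument for the dynamic programming principle, and the decomposition you use (split the running reward and the intervention costs at $\tau$, with the convention $e^{-\rho\tau}=0$ on $\{\tau=\infty\}$, then dominate respectively approximate the continuation payoff by $v(X^{x,I}_\tau)$) is the right one. Note, however, that the paper does not prove this proposition at all: its ``proof'' consists of a citation to Chen--Guo \cite{ChenGuo}, where the finite-horizon analogue is established, together with the remark that the time-homogeneous infinite-horizon formulation is standard. So your proposal is not an alternative to the paper's argument so much as an outline of the argument the paper delegates to the literature. As an outline it is sound, and you correctly isolate the one genuinely hard point: the stability of the admissible class $\mathcal{I}$ under conditioning at $\tau$ (needed for the inequality $\E[\widetilde J\,|\,\mathcal{F}_\tau]\leq v(X^{x,I}_\tau)$ in Step~1) and the measurable selection of $\delta$-optimal continuation controls together with the admissibility --- in particular \eqref{eq:2015-11-23:00} and \eqref{eq:2015-12-18:01} --- of the concatenated control in Step~2. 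Be aware that these are precisely the steps a referee would ask you to carry out in full: the usual workaround for the selection issue is not an abstract selection theorem but a countable covering of $\R_{++}$ by intervals on which a single $\delta$-optimal control suffices, exploiting the (local Lipschitz) continuity of $v$ and the continuity of $x\mapsto J(x,I)$; and the conditional admissibility in Step~1 requires a regular-conditional-probability argument on the Brownian filtration. As a blind reconstruction your proposal matches the standard proof in structure, but it remains a sketch wherever the cited reference does the real work.
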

\begin{proof}
We refer to \cite{ChenGuo} (for the finite horizon case; our formulation is the usual one for time homogeneous infinite horizon problems). 
\end{proof}

Here we study \eqref{QVI} by means of viscosity solutions.
\begin{definition}
  [Viscosity Solution]\label{vs}
Let $u\in \lipcz{\R_{++}}$.
\begin{enumerate}[(i)]
\item  $u$ is a viscosity subsolution to \eqref{QVI} if for every $(x_0,\varphi)\in \R_{++}\times C^2(\mathbb{R}_{++})$  such that $u-\varphi$ has a local maximum at $x_0$ and $u(x_0)=\varphi(x_0)$ we have
$$\min\big\{\mathcal{L}\varphi(x_0)-f(x_0),u(x_0)-\mathcal{M}u(x_0)\big\}\leq0;$$

\item $u$ is  a viscosity supersolution to \eqref{QVI} if for every $(x_0,\varphi)\in \R_{++}\times C^2(\mathbb{R}_{++})$  such that $u-\varphi$ has a local minimum at $x_0$ and $u(x_0)=\varphi(x_0)$ we have
$$\min\big\{\mathcal{L}\varphi(x_0)-f(x_0),u(x_0)-\mathcal{M}u(x_0)\big\}\geq 0;$$
\item $u$ is a viscosity solution to \eqref{QVI} if it is both a viscosity subsolution and
a viscosity  supersolution of \eqref{QVI}.
\end{enumerate}
\end{definition}

\begin{proposition}
  \label{viscosoluQVI}
 The value function $v$ is a viscosity solution of \eqref{QVI}.

\end{proposition}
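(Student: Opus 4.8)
The goal is to verify that $v$ satisfies the two one-sided inequalities in Definition~\ref{vs}. The key structural facts already available are: the Dynamic Programming Principle \eqref{2015-11-24:00}, the inequality $v\geq\mathcal{M}v$ from \eqref{disM}, and the fact (Propositions~\ref{prop:preB} and \ref{prop:semiconvex}) that $v\in\lipcz{\R_{++}}$, so that $v$ is an admissible candidate in the definition of viscosity solution. I would split the argument into the supersolution part and the subsolution part.

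\textbf{Supersolution.} Fix $x_0\in\R_{++}$ and $\varphi\in C^2(\R_{++})$ with $v-\varphi$ attaining a local minimum at $x_0$ and $v(x_0)=\varphi(x_0)$; so $v\geq\varphi$ near $x_0$. Since $v\geq\mathcal{M}v$ everywhere, the second term $u(x_0)-\mathcal{M}u(x_0)=v(x_0)-\mathcal{M}v(x_0)\geq 0$ is immediate, so it remains to show $\mathcal{L}\varphi(x_0)-f(x_0)\geq 0$. For this I would apply \eqref{2015-11-24:00} with the null control $\emptyset$ and the exit time $\tau_\varepsilon\coloneqq\varepsilon\wedge\inf\{t\geq 0\colon Z^{0,x_0}_t\notin(x_0-\delta,x_0+\delta)\}$ for small $\delta$ chosen so the local minimum holds on $(x_0-\delta,x_0+\delta)$. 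This gives $v(x_0)\geq\E[\int_0^{\tau_\varepsilon}e^{-\rho s}f(Z^{0,x_0}_s)\,ds+e^{-\rho\tau_\varepsilon}v(Z^{0,x_0}_{\tau_\varepsilon})]\geq\E[\int_0^{\tau_\varepsilon}e^{-\rho s}f(Z^{0,x_0}_s)\,ds+e^{-\rho\tau_\varepsilon}\varphi(Z^{0,x_0}_{\tau_\varepsilon})]$. Applying Dynkin's formula to $e^{-\rho t}\varphi(Z^{0,x_0}_t)$ (legitimate since $\varphi\in C^2$ and the process stays in a compact interval up to $\tau_\varepsilon$), subtracting $\varphi(x_0)=v(x_0)$, dividing by $\varepsilon$ and letting $\varepsilon\to 0^+$ (using $\tau_\varepsilon/\varepsilon\to 1$ a.s.\ and continuity of the integrand near $x_0$) yields $0\geq\E\big[\tfrac1\varepsilon\int_0^{\tau_\varepsilon}e^{-\rho s}(f(Z^{0,x_0}_s)-\mathcal{L}\varphi(Z^{0,x_0}_s))\,ds\big]\to f(x_0)-\mathcal{L}\varphi(x_0)$, which is the desired inequality.

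\textbf{Subsolution.} Fix $x_0$ and $\varphi\in C^2(\R_{++})$ with $v-\varphi$ attaining a local maximum at $x_0$, $v(x_0)=\varphi(x_0)$, so $v\leq\varphi$ near $x_0$. I argue by contradiction: suppose $\min\{\mathcal{L}\varphi(x_0)-f(x_0),\,v(x_0)-\mathcal{M}v(x_0)\}>0$. Then both $\mathcal{L}\varphi(x_0)-f(x_0)>0$ and $v(x_0)-\mathcal{M}v(x_0)>0$; by continuity of $x\mapsto\mathcal{L}\varphi(x)-f(x)$ and of $v-\mathcal{M}v$ (Lemma~\ref{lemma:M} and Proposition~\ref{prop:preB}), there is $\delta>0$ and $\eta>0$ with $\mathcal{L}\varphi\geq f+\eta$ on $(x_0-\delta,x_0+\delta)$ and $v\leq\varphi$ there. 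Now, for any $I\in\mathcal{I}$, let $\theta$ be the minimum of $\varepsilon$, the first exit time of $X^{x_0,I}$ from $(x_0-\delta,x_0+\delta)$, and the first intervention time $\tau_1$ of $I$; before $\theta$ the controlled process coincides with $Z^{0,x_0}$. Using \eqref{2015-11-24:00} with stopping time $\theta$, bounding $v(X^{x_0,I}_\theta)\leq\varphi(X^{x_0,I}_\theta)$ on $\{\theta<\tau_1\}$ and $v(X^{x_0,I}_{\theta})=v(X^{x_0,I}_{\tau_1^-}+i_1)\leq\mathcal{M}v(X^{x_0,I}_{\tau_1^-})+c_0 i_1+c_1$ (from the definition of $\mathcal{M}$, valid since $X^{x_0,I}_{\tau_1^-}=Z^{0,x_0}_{\tau_1}\in(x_0-\delta,x_0+\delta)$ on $\{\theta=\tau_1\}$) combined with $\mathcal{M}v\leq v\leq\varphi$, then applying Dynkin to $e^{-\rho t}\varphi(Z^{0,x_0}_t)$ on $[0,\theta]$, one obtains $v(x_0)\leq\varphi(x_0)-\eta\,\E\big[\int_0^\theta e^{-\rho s}\,ds\big]<\varphi(x_0)=v(x_0)$, a contradiction. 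The needed uniform lower bound $\E[\int_0^\theta e^{-\rho s}\,ds]>0$ independent of $I$ follows because, up to $\theta$, the dynamics is that of $Z^{0,x_0}$ and $\theta\geq\varepsilon\wedge(\text{exit time of }Z^{0,x_0})$, whose distribution does not depend on $I$.

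\textbf{Main obstacle.} The delicate point is the subsolution argument: one must handle the jump at the first intervention time correctly, ensuring that the inequality $v(y+i)\leq\mathcal{M}v(y)+c_0 i+c_1$ is applied at the pre-jump location $y=X^{x_0,I}_{\tau_1^-}$, which still lies in the neighbourhood where $v\leq\varphi$, and that the bound $\E[\int_0^\theta e^{-\rho s}ds]\geq\kappa>0$ is genuinely uniform over all admissible $I$. Everything else — the Dynkin computations, the passage to the limit $\varepsilon\to0$, the continuity of $\mathcal{M}v$ — is routine once \eqref{2015-11-24:00}, \eqref{disM}, and Lemma~\ref{lemma:M} are in hand.
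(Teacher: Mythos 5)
Your supersolution argument coincides with the paper's (DPP with the null control stopped at the exit time, Dynkin, divide by $\varepsilon$), and your overall strategy for the subsolution part — negate the minimum, localize by continuity of $\call\varphi-f$ and of $v-\calm v$, and run the DPP up to a stopping time $\theta$ — is also the paper's. However, there is a genuine gap in the last step of your subsolution argument. You define $\theta=\varepsilon\wedge\sigma_{\mathrm{exit}}\wedge\tau_1$ and then assert that $\theta\geq\varepsilon\wedge(\text{exit time of }Z^{0,x_0})$, so that $\E\big[\int_0^\theta e^{-\rho s}\,ds\big]$ is uniformly bounded below over $I$. This contradicts your own definition: $\theta\leq\tau_1$, and admissible controls may take $\tau_1=0$ or $\tau_1$ arbitrarily small with high probability, in which case $\E\big[\int_0^\theta e^{-\rho s}\,ds\big]$ is zero or arbitrarily small. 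With the estimates as you wrote them (you bound the post-jump term by $\calm v\leq v\leq\varphi$, discarding the strict gap), the conclusion is only $v(x_0)\leq\varphi(x_0)$, which is no contradiction.

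The missing ingredient is precisely the strict inequality $v-\calm v\geq\xi/2$ on the neighbourhood, which you establish but do not use quantitatively. On $\{\theta=\tau_1\}$ the jump term satisfies $-e^{-\rho\tau_1}(c_0i_1+c_1)+e^{-\rho\tau_1}v(X_{\tau_1^-}+i_1)\leq e^{-\rho\tau_1}\calm v(X_{\tau_1^-})\leq e^{-\rho\tau_1}\big(\varphi(X_{\tau_1^-})-\xi/2\big)$, so the payoff is penalized by $\tfrac{\xi}{2}\,\E\big[e^{-\rho\tau_1}\mathbf{1}_{\{\theta=\tau_1\}}\big]\geq \tfrac{\xi}{2}e^{-\rho\varepsilon}\,\P\{\theta=\tau_1\}$; on the complementary event $\theta=\varepsilon\wedge\sigma_{\mathrm{exit}}$ and the drift term gives a penalty $\eta\,\E\big[\mathbf{1}_{\{\theta\neq\tau_1\}}\int_0^{\varepsilon\wedge\sigma_{\mathrm{exit}}}e^{-\rho s}ds\big]$. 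For $\varepsilon$ small enough these two penalties combine into a bound that is uniform in $I$, and the contradiction follows. The paper sidesteps this bookkeeping by observing that, because of (iii), any control intervening inside the ball is suboptimal, so the DPP supremum may be restricted to controls with $\tau_1>\tau$; this turns the DPP into an exact identity along the uncontrolled dynamics, after which a single application of Dynkin's formula yields $\tfrac{\varepsilon}{2}\E[\tau]\leq 0$, contradicting $\P\{\tau>0\}=1$. Either repair works, but as written your uniformity claim is false and must be replaced.
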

\begin{proof}
%

\noindent {\emph{Supersolution property.}}  
 Let $x_0\in \mathbb{R}_{++}$ and $\varphi\in C^2(\mathbb{R}_{++})$ be  such that $v-\varphi$ has a local minimum at $x_0$ and $v(x_0)=\varphi(x_0)$.
In particular,  $v\geq \varphi$ 
on $(x_0-\delta,x_0+\delta)$
for a suitable $\delta\in(0,x_0)$.
By  \eqref{disM}  we only need to show that 
$\mathcal{L}\varphi(x_0)-f(x_0)\geq 0$.
To this aim, consider the stopping time 
$\tau\coloneqq\inf\left\{t\geq 0\colon 
|X^{x_0,\emptyset}_t-x_0|>\delta\right\}$, and note that $\P\{\tau>0\}=1$ by continuity of trajectories. Then, from \eqref{2015-11-24:00} we get 
\begin{equation}
  \label{eq:2015-11-24:01tris}
  v(x_0)\geq\mathbb{E}\left[\int_0^{\tau\wedge \varepsilon} e^{-\rho t} {f}(X_t^{x_0,\emptyset})dt +e^{-\rho(\tau\wedge \varepsilon)}v(X^{x_0,\emptyset}_{\tau \wedge\varepsilon})\right]
\qquad \forall \epsilon>0.
  \end{equation}
  From this we derive
\begin{equation}
  \label{eq:2015-11-24:01bis}
  \varphi(x_0)\geq\mathbb{E}\left[\int_0^{\tau\wedge \varepsilon} e^{-\rho t} {f}(X_t^{x_0,\emptyset})dt +e^{-\rho(\tau\wedge \varepsilon)}\varphi(X^{x_0,\emptyset}_{\tau \wedge \varepsilon})\right]
\qquad \forall \epsilon>0.
\end{equation}
By applying Dynkin's formula, dividing by $\epsilon$, letting $\varepsilon \rightarrow 0^+$, and considering that
$X^{x,\emptyset}$ is right-continuous in $0$ and 
 $\P\{\tau>\varepsilon\}\rightarrow 1$ as $\varepsilon\rightarrow 0^+$, we obtain the desired inequality. 

\smallskip
\noindent{\emph{Subsolution property.}}
 Let $x_0\in \mathbb{R}_{++}$ and $\varphi\in C^2(\mathbb{R}_{++})$ be  such that $v-\varphi$ has a local maximum at $x_0$ and $v(x_0)=\varphi(x_0)$. If $v(x_0)=\calm v(x_0)$, then we are done.
Then assume $v(x_0)\geq \xi+ \calm v(x_0)$ for some $\xi>0$.
In this case, we need to show that $
\mathcal{L}\varphi(x_0)-f(x_0)\leq 0$. 
Assume by contradiction
 that $
\mathcal{L}\varphi(x_0)-f(x_0)\geq \varepsilon>0$. By continuity of $\call\varphi-f$ and 
of $v-\calm v$,
 and in view of the fact that $v-\varphi$ has a local maximum at $x_0$ and $\varphi(x_0)=v(x_0)$, there exists $\delta\in(0,x_0/2)$ such that 
\begin{equation}\label{eq:sub}
\forall x\in B(x_0,2\delta]\ \ \ \ \ \begin{dcases}
\mbox{(i)} & \mathcal{L}\varphi(x)-f(x)\geq \varepsilon/2\\
\mbox{(ii)} &
\varphi (x) \geq v(x) \\
\mbox{(iii)} &  v(x) - \calm  v(x) \geq \xi/2.
\end{dcases} 
\end{equation}
Now define the stopping time 
$
\tau\coloneqq \inf\{t\geq 0\colon 
| X_t^{x_0,\emptyset}-x_0|>\delta
\} 
$
and note that  $\P\{\tau>0\}=1$.
In view of
\eqref{eq:sub}(iii),
 undertaking an 
  investment in the region $B(x_0,2\delta]$ is not optimal.
 Hence \eqref{2015-11-24:00} can be rewritten limiting the ranging of $I$ to the set of controls such that $\tau_1>\tau$, yielding the simple equality
\begin{equation}\label{DPP2}
v(x_0)= \E\left[\int_0^{\tau}e^{-\rho t} f(X^{x_0,\emptyset}_t)dt +e^{-\rho \tau} v (X_{\tau}^{x_0,\emptyset}) \right].
\end{equation}
Finally, we have, 
by  \eqref{DPP2}, Dynkin's formula, and \eqref{eq:sub}(i)--(ii),
\begin{equation}
\begin{split}
\frac{\varepsilon}{2}\E\left[\tau \right]&\leq \E\left[\int_0^\tau e^{-\rho t}\left( \mathcal{L}\varphi(X_t^{x_0,\emptyset})-f(X_t^{x_0,\emptyset})\right)dt \right]\\
& =
\varphi(x_0)- \E\left[\int_0^{\tau} e^{-\rho t}f(X^{x_0,\emptyset}_t)dt +e^{-\rho \tau} \varphi (X_{\tau}^{x_0,\emptyset}) \right]\\
 &\leq  
v(x_0)- \E\left[\int_0^{\tau} e^{-\rho  t}f(X^{x_0,\emptyset}_t)dt +e^{-\rho \tau} v (X_{\tau}^{x_0,\emptyset}) \right]=0.
\end{split}
\end{equation} 
This provide a contradiction as $\P\left\{\tau>0\right\}=1$.
 \end{proof}
 \subsection{Regularity of the value function}

Here we establish the regularity properties  of the value function.
Precisely, exploiting the semiconvexity provided by Proposition \ref{prop:semiconvex} and the viscosity property provided by Proposition \ref{viscosoluQVI}, we show that it is of class $C^1$ on $\R_{++}$ and of class $C^2$ on $\mathcal{C}$.

\begin{theorem}
  \label{th:viscosoluQVI} 
$
v\in C^1(\R_{++};\R)\,\bigcap\, C^2(\mathcal{C};\R)$.
\end{theorem}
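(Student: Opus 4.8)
The plan is to prove the two regularity claims separately, starting with the $C^2$-interior regularity on $\mathcal{C}$, then bootstrapping to $C^1$ on all of $\R_{++}$ using the semiconvexity of $v$.

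First I would establish $v\in C^2(\mathcal{C};\R)$. On the open set $\mathcal{C}$ we have $\mathcal{M}v<v$, so the viscosity solution property from Proposition \ref{viscosoluQVI} forces $v$ to be a viscosity solution of the linear equation $\mathcal{L}v-f=0$, i.e.\ $\rho v - b v' - \tfrac12\sigma^2 v'' = f$ in the viscosity sense on $\mathcal{C}$. Since $\sigma>0$ on $\R_{++}$ (Assumption~\ref{eq:2017-03-23:00}), this is a uniformly elliptic equation on any compact subinterval of $\mathcal{C}$ with continuous (indeed $C^1$) coefficients. The standard argument is: fix an open interval $(a,c)\subset\mathcal{C}$ with $\bar{(a,c)}\subset\mathcal{C}$; solve the Dirichlet problem $\mathcal{L}w = f$ on $(a,c)$ with $w(a)=v(a)$, $w(c)=v(c)$ — a classical ODE boundary-value problem whose solution $w$ is $C^2$ (in fact the coefficients being $C^1$ gives $w\in C^2$, and one can even get more) — and then invoke the comparison principle for the linear operator $\mathcal{L}$ on this interval to conclude that the continuous viscosity solution $v$ coincides with the classical solution $w$. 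Hence $v\in C^2((a,c))$, and since $(a,c)$ was an arbitrary relatively compact subinterval of $\mathcal{C}$, we get $v\in C^2(\mathcal{C})$. (Alternatively, a mollification/$C^{2,\alpha}$ Schauder-type argument works, but the ODE comparison approach is cleanest in dimension one.)

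Next I would prove $v\in C^1(\R_{++})$. The only points in question are the boundary points $\partial\mathcal{C}\cap\R_{++}$, i.e.\ the boundary between the continuation and action regions, and interior points of $\mathcal{A}$. By Proposition~\ref{prop:semiconvex}, $v$ is semiconvex on $[\beta,\infty)$ for every $\beta>0$; a semiconvex function on an open interval is locally Lipschitz and, at every point, has a nonempty closed interval of subdifferentials $[v'_-(x),v'_+(x)]$ with $v'_-\le v'_+$, where $v'_-$ is left-continuous and $v'_+$ is right-continuous, and $v$ is differentiable at $x$ iff $v'_-(x)=v'_+(x)$. The key point is to rule out a genuine corner, i.e.\ to show $v'_-(x_0)=v'_+(x_0)$ at any $x_0$. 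Semiconvexity already gives that $v$ is the difference of a convex function and a smooth (quadratic) one, so one-sided derivatives exist everywhere; what semiconvexity additionally precludes is a \emph{concave} kink (a downward corner). I would argue: suppose $x_0$ is a point with $v'_-(x_0)<v'_+(x_0)$. Then for the parabola $\varphi(x) = v(x_0) + p(x-x_0) - \tfrac{A}{2}(x-x_0)^2$ with $p\in(v'_-(x_0),v'_+(x_0))$ and $A>0$ large, $v-\varphi$ has a strict local minimum at $x_0$ and $v(x_0)=\varphi(x_0)$; the supersolution property then gives $\mathcal{L}\varphi(x_0)\ge f(x_0)$, i.e.\ $\rho v(x_0) - b(x_0)p + \tfrac{A}{2}\sigma^2(x_0)\ge f(x_0)$, which holds trivially for $A$ large and gives no contradiction — so this direction must instead be attacked via the \emph{subsolution} property combined with semiconvexity. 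Concretely: semiconvexity means $v'_+(x)\le v'_-(x) + K_1(\beta)\cdot 0$... more precisely, $x\mapsto v(x)+\tfrac{K_1(\beta)}{2}x^2$ is convex on $[\beta,\infty)$, hence its left derivative does not exceed its right derivative and its right derivative is nondecreasing, and in particular $v'_+$ is a nondecreasing-plus-Lipschitz function; a true downward corner would make the convex function $v(\cdot)+\tfrac{K_1}{2}(\cdot)^2$ have $v'_-(x_0)<v'_+(x_0)$, which IS allowed for a convex function (convex functions can have upward corners). So the correct mechanism is: test from above with a smooth $\varphi$ touching at $x_0$ — but a $C^2$ function touching a convex-type kink from above at the corner doesn't exist unless the kink is differentiable. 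This is the standard fact that a semiconvex function that is a viscosity \emph{subsolution} of a second-order equation with nondegenerate leading term is $C^1$: at a non-differentiability point $x_0$ one has $v'_-(x_0)<v'_+(x_0)$, but then no $C^2$ function lies above $v$ near $x_0$ with equality at $x_0$ having a classical second-order jet, forcing us instead to use that semiconvexity provides a lower parabola, invoke the supersolution inequality at $x_0$ for \emph{every} slope $p\in[v'_-(x_0),v'_+(x_0)]$, and then — crucially — use the subsolution inequality to get an opposite bound, the two being compatible only if the subdifferential is a singleton. I would write this out as: by semiconvexity, at $x_0$ the subdifferential $\partial v(x_0)=[v'_-(x_0),v'_+(x_0)]$ and for each slope $p$ in its interior, $\varphi_p(x):=v(x_0)+p(x-x_0)-\tfrac{K_1(\beta)+1}{2}(x-x_0)^2$ touches $v$ from below at $x_0$, giving $\rho v(x_0)-b(x_0)p+\tfrac12\sigma^2(x_0)(K_1(\beta)+1)\ge f(x_0)$ for all such $p$, which as $p$ ranges over a nondegenerate interval and $b(x_0)$ may be nonzero gives a genuine constraint; combined with the fact that on the action region near $x_0$ (if $x_0\in\partial\mathcal{C}$ is approached from $\mathcal{A}$) the relation $v=\mathcal{M}v$ and the regularity of $\mathcal{M}v$ inherited from $v$ on $\mathcal{C}$ forces matching one-sided slopes — this is precisely the smooth-fit. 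Concretely the cleanest route: show $\mathcal{M}v$ is itself $C^1$ near any boundary point using Proposition~\ref{propempty} (the argmax $\xi(x)$ lands in $\mathcal{C}$ where $v$ is $C^2$) and the envelope theorem, so on $\mathcal{A}$, $v=\mathcal{M}v$ is $C^1$; then at $x_0\in\partial\mathcal{C}$, $v$ is $C^2$ on the $\mathcal{C}$-side and $C^1$ on the $\mathcal{A}$-side, and matching of the first derivatives at $x_0$ follows from the semiconvexity (which forbids the concave corner) together with the viscosity supersolution inequality (which forbids the convex corner): a convex corner $v'_-(x_0)<v'_+(x_0)$ would let a parabola with any slope in between touch from below, and one checks the resulting family of supersolution inequalities is violated for the slope making $b(x_0)p$ large in the appropriate sign, OR — if $b(x_0)=0$ — one perturbs the test function to second order; in all cases one derives $v'_-(x_0)=v'_+(x_0)$.

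The main obstacle is exactly this smooth-fit step at $\partial\mathcal{C}$: showing there is no corner. The interior $C^2$-regularity on $\mathcal{C}$ is routine linear PDE/ODE theory. The $C^1$ claim is where the interplay between the \emph{semiconvexity} (Proposition~\ref{prop:semiconvex}, ruling out downward kinks) and the \emph{viscosity supersolution property} (ruling out upward kinks, via testing with the lower parabolas that semiconvexity supplies) must be combined delicately — and one must also handle the possibility that an entire subinterval of $\mathcal{A}$ has no adjacent $\mathcal{C}$ on one side, using instead that $\mathcal{M}v$ inherits $C^1$-regularity from $v|_{\mathcal{C}}$ through Proposition~\ref{propempty} and an envelope argument. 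I expect the authors to organize the second part as: (1) $\mathcal{M}v$ is $C^1$ wherever $\Xi$ is nonempty and well-behaved, hence $v$ is $C^1$ on $\mathrm{int}\,\mathcal{A}$; (2) at $x_0\in\partial\mathcal{C}$, combine the one-sided $C^2$ estimate from $\mathcal{C}$, the $C^1$ estimate from $\mathcal{A}$, semiconvexity, and the supersolution inequality to force equality of one-sided derivatives; (3) conclude.
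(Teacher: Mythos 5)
Your treatment of the $C^2(\mathcal{C})$ part is correct and is exactly the paper's argument (classical solvability of the Dirichlet problem for $\mathcal{L}u=f$ on compact subintervals of $\mathcal{C}$ plus uniqueness of viscosity solutions). The gap is in the $C^1$ part, and it is a genuine missing idea rather than a presentational issue. The paper's mechanism is the following: if $v'_-(x_0)<v'_+(x_0)$ (the only corner type that semiconvexity permits), then setting $p\coloneqq \tfrac12(v'_-(x_0)+v'_+(x_0))$ and $c\coloneqq \tfrac12(v'_+(x_0)-v'_-(x_0))>0$, one has locally $v(x)\geq v(x_0)+p(x-x_0)+c|x-x_0|-\tfrac{K}{2}(x-x_0)^2$, and since $c|x-x_0|-\tfrac{K}{2}(x-x_0)^2\geq \tfrac{n}{2}(x-x_0)^2$ for $|x-x_0|$ small enough (depending on $n$), the parabola $\varphi_n(x)=v(x_0)+p(x-x_0)+\tfrac{n}{2}(x-x_0)^2$ touches $v$ \emph{from below} at $x_0$ with $\varphi_n''(x_0)=n$ arbitrarily large and \emph{positive}. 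Then $\mathcal{L}\varphi_n(x_0)=\rho v(x_0)-b(x_0)p-\tfrac{n}{2}\sigma^2(x_0)\rightarrow -\infty$ because $\sigma^2(x_0)>0$, contradicting the supersolution inequality $\mathcal{L}\varphi_n(x_0)\geq f(x_0)$. The supersolution property alone kills the corner; no subsolution test, no envelope argument, and no analysis of $\partial\mathcal{C}$ versus $\operatorname{int}\mathcal{A}$ is needed.

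You come close but turn away at the decisive moment: you test only with \emph{concave} parabolas $v(x_0)+p(x-x_0)-\tfrac{A}{2}(x-x_0)^2$, correctly observe that they yield no contradiction, and then conclude that ``this direction must instead be attacked via the subsolution property.'' That route cannot work: at a point where $v'_-(x_0)<v'_+(x_0)$ no $C^2$ function touches $v$ from above at $x_0$, so the subsolution property is vacuous there and gives no information. Your fallback of varying the slope $p$ over $[v'_-(x_0),v'_+(x_0)]$ to make ``$b(x_0)p$ large in the appropriate sign'' also fails, since $p$ ranges over a bounded interval and the first-order term cannot blow up; the contradiction must come from the second-order term, which is exactly what the large-positive-curvature test functions exploit. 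The envelope-theorem/$\mathcal{M}v$ route you sketch as an alternative is not carried out and would in any case be circular as stated, since the regularity of $\mathcal{M}v$ on $\mathcal{A}$ via Proposition~\ref{propempty} is derived in the paper (Corollary~\ref{Lemma1}) only \emph{after} differentiability of $v$ is known. Once $v'_-=v'_+$ everywhere is established, your final step (differentiable $+$ semiconvex $\Rightarrow$ $C^1$) is the same as the paper's.
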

\begin{proof}
Let $x_0\in\R_{++}$. As $v$ is semiconvex in a neighborhood of $x_0$  (Proposition~\ref{prop:semiconvex}), in such a neighborhood it can be written as difference of a convex function and  a quadratic one (see Remark \ref{rem:semi}). Hence, the one-side derivatives 
 $v'_+(x_0), v'_-(x_0)$ exist and $v'_-(x_0)\leq v'_+(x_0)$. To show that $v$ is differentiable at $x_0$, we need to show that the previous inequality is indeed an equality. Assume, by contradiction, that  $v'_-(x_0)< v'_+(x_0)$. Then we can construct a sequence of functions $\{\varphi_n\}_{n\in\N} \subset C^2(\R_{++})$ such that, for every $n\in \N$, 
 \begin{equation*}
    \varphi_n(x_0)=v(x_0),
\qquad
 \varphi_n\leq v,
\qquad
\varphi'_n(x_0)=\frac{v'_-(x_0)+v'_+(x_0)}{2},
\qquad
\varphi_n''(x_0)\geq n.
\end{equation*}
 Then $\call \varphi_n(x_0)-f(x_0)\rightarrow -\infty$ as $n\rightarrow \infty$, which is impossible as $v$ is a viscosity supersolution to \eqref{QVI},
by Proposition~\ref{viscosoluQVI}.
%
Hence
it must be $v'_-(x_0)=v'_+(x_0)$.
By arbitrariness of $x_0$, 
this shows that
 $v$ is differentiable on $\R_{++}$. By semiconvexity we deduce that $v\in C^1(\R_{++})$ 
(see \cite[Theorem~25.5]{Rockafellar1970}).

The fact that $v\in C^2(\calc;\R)$ follows from a  standard localization argument: in each interval $(a,b)\subset \calc$ the function $v$ is a viscosity solution to the linear equation $\call u-f=0$ with boundary conditions $u(a)=v(a)$ and $u(b)=v(b)$. By uniform ellipticity of $\call$ over $(a,b)$ (see, e.g., \cite[Ch.\,6]{Evans}), this equation admits a unique solution in $C^2((a,b);\R)$, which  must also be  a viscosity solution. By uniqueness of viscosity solutions  to the linear equation above with Dirichlet boundary conditions, we conclude that $v$ coincide with the classical solution, hence $v\in C^2((a,b);\R)$. As $\mathcal{C}$ is open, the claim follows by arbitrariness of $(a,b)$ .
\end{proof}



\begin{corollary}\label{Lemma1}
We have
\begin{enumerate}[(i)]
\item
\label{ffa}
$v'(x+\zeta)=c_0$, for every $x\in\mathcal{A}, \ \forall \zeta\in\Xi(x)$.
\item 
\label{derv}
$v'(x)
=c_0$, for every $x\in\mathcal{A}.$
\end{enumerate}
\end{corollary}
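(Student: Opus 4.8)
The plan is to derive both statements from the $C^1$ regularity of $v$ (Theorem~\ref{th:viscosoluQVI}) together with the characterization of $\Xi(x)$ as the set of interior maximizers of $i\mapsto v(x+i)-c_0i-c_1$.

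\textbf{Part (\ref{ffa}).} Fix $x\in\mathcal{A}$ and $\zeta\in\Xi(x)$; this set is nonempty by Proposition~\ref{propempty}(\ref{2016-11-06:06}). The function $g(i)\coloneqq v(x+i)-c_0i-c_1$ is of class $C^1$ on $\R_{++}$ because $v\in C^1(\R_{++})$, and $\zeta>0$ is an interior point of $(0,\infty)$ at which $g$ attains its maximum. Hence Fermat's rule gives $g'(\zeta)=v'(x+\zeta)-c_0=0$, i.e.\ $v'(x+\zeta)=c_0$. This is essentially a one-line argument once $C^1$ regularity is in hand; the only subtlety worth a remark is that $\zeta$ is indeed interior (not a boundary maximizer at $0$), which is exactly what Proposition~\ref{propempty} ensures.

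\textbf{Part (\ref{derv}).} Now let $x\in\mathcal{A}$, so $v(x)=\mathcal{M}v(x)$. I want to show $v'(x)=c_0$. First I would establish the inequality $v'(x)\le c_0$: for any $i>0$ the suboptimality inequality \eqref{DPPPP} gives $v(x)\geq v(x+i)-c_0i-c_1$ while, more to the point, for $x'<x$ close to $x$ one has $v(x)\ge v(x')$ by monotonicity, and combining $v(x')\geq v(x'+(x-x'+\zeta))-c_0(x-x'+\zeta)-c_1$ (choosing a competitor investment) with $v(x)=v(x+\zeta)-c_0\zeta-c_1$ for $\zeta\in\Xi(x)$ yields $v(x)-v(x')\le c_0(x-x')$; dividing and letting $x'\uparrow x$ gives $v'(x)\le c_0$. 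For the reverse inequality, pick $\zeta\in\Xi(x)$ and consider $x'$ slightly larger than $x$; since $v(x')\ge\mathcal{M}v(x')\ge v(x'+\zeta)-c_0\zeta-c_1$ and $v(x)=v(x+\zeta)-c_0\zeta-c_1$, subtracting gives $v(x')-v(x)\ge v(x'+\zeta)-v(x+\zeta)$, so $\frac{v(x')-v(x)}{x'-x}\ge \frac{v(x'+\zeta)-v(x+\zeta)}{x'-x}$; letting $x'\downarrow x$ and using $C^1$ regularity together with part (\ref{ffa}) (namely $v'(x+\zeta)=c_0$) yields $v'(x)\ge c_0$. Combining the two inequalities gives $v'(x)=c_0$.

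\textbf{Main obstacle.} The delicate point is the direction $v'(x)\ge c_0$ for $x\in\mathcal{A}$: one needs a competitor argument transferring the optimality relation at $x$ to nearby points $x'$, using that $\mathcal{M}v(x')\ge v(x'+\zeta)-c_0\zeta-c_1$ with the \emph{same} $\zeta$, and then passing to the limit in the difference quotient. This is where the $C^1$ regularity of $v$ (applied at the point $x+\zeta\in\mathcal{C}$, via part (\ref{ffa})) is essential — without it one would only get one-sided derivative bounds. A clean alternative for (\ref{derv}) is: by (\ref{ffa}), $v'\equiv c_0$ on the set $\{x+\zeta:\ x\in\mathcal{A},\ \zeta\in\Xi(x)\}$, and Proposition~\ref{propempty}(\ref{2016-11-06:07}) places that set in $\mathcal{C}$; then one argues that near any $x\in\mathcal{A}$ the map $\mathcal{M}v$ is differentiable with $(\mathcal{M}v)'(x)=v'(x+\zeta(x))\cdot\frac{d}{dx}(x+\zeta(x))$ patterns — but the simplest rigorous route remains the two one-sided difference-quotient estimates sketched above.
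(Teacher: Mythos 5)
Your proposal is correct and follows essentially the same route as the paper: part (i) is Fermat's rule at the interior maximizer $\zeta$ using the $C^1$ regularity of $v$, and part (ii) is the same two-sided difference-quotient argument combining $v\geq \mathcal{M}v$ (with the fixed competitor $\zeta$, suitably shifted) against the equality $v(x)=v(x+\zeta)-c_0\zeta-c_1$, then letting the increment tend to $0^\pm$ and invoking part (i). The only cosmetic difference is that you split the estimate into two one-sided bounds while the paper writes a single chain of inequalities in $\delta\gtrless 0$; the unused appeal to monotonicity in your $v'(x)\le c_0$ step can simply be dropped.
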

\begin{proof}
The proof is the same as in \cite[Lemma.~5.2]{GuoWu} \red{and we skip it for the sake of brevity.}
\end{proof}

Corollary \ref{Lemma1}(\ref{ffa}) will be used in the next section to characterize the optimal target point, i.e.\ the point in the continuation region where it is optimal to place the system when it reaches the action region.



\section{Explicit expression of the value function} \label{sec:exp}

In
 this section we characterize $\mathcal{C},\mathcal{A}$, and $v$ up to the decreasing solution of the homogeneous ODE $\mathcal{L}=0$ and to the solution of a nonlinear system of three algebraic equations.

\begin{lemma}\label{lemma:a}
 $\mathcal{A}$ does not contain any interval of the form $[a,\infty)$, with $a>0$. In particular
$\mathcal{C}\neq \emptyset$.
\end{lemma}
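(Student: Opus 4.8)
The plan is to argue by contradiction: suppose $[a,\infty)\subset\mathcal{A}$ for some $a>0$. The idea is to exploit the two facts we already have about $\mathcal{A}$, namely that $v'=c_0$ on $\mathcal{A}$ (Corollary~\ref{Lemma1}\eqref{derv}) and that $v$ solves \eqref{QVI}, hence in particular $\mathcal{L}v-f\geq 0$ on $\mathcal{A}$ (this holds pointwise there: on $\mathcal{A}$ we cannot use the $C^2$ regularity of Theorem~\ref{th:viscosoluQVI}, which is only claimed on $\mathcal{C}$, but since $v'\equiv c_0$ on the open half-line $(a,\infty)\subset\mathcal{A}$, $v$ is affine there with slope $c_0$, so $v\in C^\infty((a,\infty))$ with $v''\equiv 0$ and $v(x)=c_0 x + d$ for some constant $d$). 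Plugging this into $\mathcal{L}v-f\geq 0$ on $(a,\infty)$ gives
\begin{equation*}
\rho(c_0 x+d)-b(x)c_0-\tfrac12\sigma^2(x)\cdot 0 - f(x)\geq 0\qquad\forall x>a,
\end{equation*}
i.e. $\rho c_0 x+\rho d - c_0 b(x)\geq f(x)$ for all $x>a$.

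The first step is therefore to record that $v$ is affine with slope $c_0$ on $(a,\infty)$, which follows immediately from Corollary~\ref{Lemma1}\eqref{derv} applied on the open set $(a,\infty)$ together with continuity of $v$. The second step is to derive the displayed pointwise inequality from the viscosity supersolution property (Proposition~\ref{viscosoluQVI}): at each $x_0>a$ the smooth function $\varphi(x)=c_0x+d$ touches $v$ from below (in fact equals it locally), so $\mathcal{L}\varphi(x_0)-f(x_0)\geq 0$. The third and decisive step is to show this inequality is incompatible with the growth/concavity hypotheses on $f$ and the control on $b$. Here one should use Assumption~\ref{2016-11-06:00}: $f'$ is strictly decreasing and $f'(\infty)=0$, while the left-hand side $\rho c_0 x + \rho d - c_0 b(x)$ has derivative $\rho c_0 - c_0 b'(x)$, which is bounded below by $c_0(\rho - M_b)>0$ by Assumption~\ref{ass:rho}. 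So the left-hand side grows at least linearly with a strictly positive slope bounded away from zero, whereas $f(x)-f(a)=\int_a^x f'(s)\,ds$ has derivative $f'(x)\to 0$; this does not immediately give a contradiction since both sides go to $+\infty$. The real contradiction must instead come from the \emph{other} branch of \eqref{QVI}, the obstacle inequality $v\geq\mathcal{M}v$ combined with $v=\mathcal{M}v$ on $\mathcal{A}$: since $v$ is affine with slope exactly $c_0$ on $(a,\infty)$, for any $x>a$ and any $i>0$ we have $v(x+i)-c_0 i - c_1 = v(x)-c_1 < v(x)$, so $\mathcal{M}v(x)$ computed over increments landing in $(a,\infty)$ is strictly less than $v(x)$; one then checks, using $\limsup_{x\to\infty}v(x)/x=0$ from Proposition~\ref{prop:preB} — which flatly contradicts $v(x)=c_0x+d$ with $c_0>0$ — that $[a,\infty)\subset\mathcal{A}$ is impossible. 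In fact this last observation alone closes the argument: $v$ affine with slope $c_0>0$ on $(a,\infty)$ forces $v(x)/x\to c_0>0$, contradicting Proposition~\ref{prop:preB}.

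So the cleanest route is: (i) $[a,\infty)\subset\mathcal{A}$ $\Rightarrow$ $v'\equiv c_0$ on $(a,\infty)$ by Corollary~\ref{Lemma1}\eqref{derv}; (ii) hence $v(x)=c_0x+d$ on $(a,\infty)$ by integration and continuity; (iii) hence $\limsup_{x\to\infty}v(x)/x=c_0>0$, contradicting Proposition~\ref{prop:preB}. The ``In particular $\mathcal{C}\neq\emptyset$'' then follows: if $\mathcal{C}=\emptyset$ then $\mathcal{A}=\R_{++}\supset[a,\infty)$ for every $a>0$, which we just excluded (alternatively it was already noted after Proposition~\ref{propempty}, but the present statement gives it directly).

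The step I expect to be the only one needing care is the passage from $v'\equiv c_0$ on the open set $(a,\infty)$ to $v$ being genuinely affine there — this is just the fundamental theorem of calculus since $v\in C^1(\R_{++})$ by Theorem~\ref{th:viscosoluQVI}, so it is routine; and one must make sure Corollary~\ref{Lemma1}\eqref{derv} is being invoked at interior points of $\mathcal{A}$, which is fine since $(a,\infty)$ is open and contained in $\mathcal{A}$. Everything else is a direct citation of Proposition~\ref{prop:preB}. Thus the main ``obstacle'' is really just bookkeeping; no deep estimate is required once one notices the clash with the sublinear-growth bound.
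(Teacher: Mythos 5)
Your final, cleaned-up route --- $[a,\infty)\subset\mathcal{A}$ forces $v'\equiv c_0$ there by Corollary~\ref{Lemma1}\emph{(ii)}, hence $v(x)=c_0x+v(a)$ for $x\geq a$, contradicting the sublinear growth $\limsup_{x\to\infty}v(x)/x=0$ of Proposition~\ref{prop:preB} --- is exactly the paper's argument (the paper additionally remarks that $v=\mathcal{M}v$ on $[a,\infty)$ would also fail because $c_1>0$, which is the second observation you made). Correct, and essentially the same approach.
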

\begin{proof}
   Assume, by contradiction, that there exists  $a>0$ such that $\mathcal{A}\supset [a,\infty)$. Then,  due to
Lemma~\ref{Lemma1}\emph{(\ref{derv})}, we  have
$$v(x)=\red{c_0(x-a)}+v(a), \ \ \ \forall x\geq  a,$$
which contradicts Proposition~\ref{prop:preB}.
On the other hand  we should also  have
$$v(x)=\mathcal{M}v(x), \ \ \forall x\geq a.$$
So it must be
$$\red{c_0(x-a)}+v(a)=\sup_{i>0}\big\{\red{c_0(x+i-a)}+v(a)-c_0i-c_1\big\} \ \ \ \forall x\geq a,$$
which is impossible as $c_1>0$. 
\end{proof}
The following assumption ensures that the action region is an interval.
\begin{Assumption}\label{ass:bb}
$b|_{\R_+}$ is concave.
\end{Assumption}

\begin{lemma}\label{2017-04-04:00}
 Let Assumption \ref{ass:bb} hold. Then $\mathcal{A}$ is an interval.
\end{lemma}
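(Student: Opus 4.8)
The goal is to show that under the concavity of $b$ on $\R_+$ (Assumption~\ref{ass:bb}), the action region $\mathcal A$ is an interval. My plan is to argue by contradiction: suppose $\mathcal A$ is not an interval, so there exist $x_1<x_2<x_3$ with $x_1,x_3\in\mathcal A$ but $x_2\in\mathcal C$. The key tool is that $v$ is a classical solution of the QVI (Theorem~\ref{th:viscosoluQVI}), so on the open component of $\mathcal C$ containing $x_2$ it solves the linear ODE $\mathcal L v=f$ with equality, while on $\mathcal A$ one has the pointwise identity $v'=c_0$ from Corollary~\ref{Lemma1}\emph{(\ref{derv})}. The strategy is to derive a contradiction by examining the behavior of $v'$ (or of $\mathcal L v - f$) across the interface between $\mathcal C$ and $\mathcal A$, using the concavity of $b$ to control the sign of the relevant quantity.

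More concretely, here is the order of steps I would carry out. First, I would show that on any component $(a,b)$ of $\mathcal C$ that lies between two points of $\mathcal A$, the function $g:=v'$ is $C^1$ (since $v\in C^2(\mathcal C)$), and at both endpoints $v'(a^+)=v'(b^-)=c_0$ by $C^1$-regularity of $v$ together with $v'\equiv c_0$ on $\mathcal A$. Second, I would differentiate the ODE $\rho v - b v' - \tfrac12\sigma^2 v'' = f$ once on $(a,b)$; writing $w:=v'$, this gives a second-order linear ODE for $w$ of the form
\[
\tfrac12\sigma^2 w'' + \Big(b + \sigma\sigma'\Big) w' - \big(\rho - b'\big) w = -f',
\]
with boundary data $w(a)=w(b)=c_0$. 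Third, I would analyze this boundary-value problem: since $f'>0$ (Assumption~\ref{2016-11-06:00}) and $\rho>M_b\ge b'$ (Assumption~\ref{ass:rho}), the function $w\equiv c_0$ would satisfy $\mathcal L_1[c_0] = -(\rho-b')c_0 < -f' $ is what one wants to compare, hence $w-c_0$ satisfies a differential inequality forcing a definite sign; a maximum-principle argument then shows $w=v'$ cannot return to the value $c_0$ at $b$ unless it stays on one side, and one reads off that $v' > c_0$ or $v' < c_0$ strictly inside $(a,b)$, in a way incompatible with $v\ge\mathcal Mv$ on $\mathcal C$ and $v=\mathcal Mv$ on $\mathcal A$. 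Fourth — and this is where Assumption~\ref{ass:bb} genuinely enters — I would use that concavity of $b$ makes $b'$ nonincreasing, which pins down the monotonicity of the coefficient $\rho-b'$ and thereby guarantees the sign comparison is uniform across the whole gap; combined with $f'$ strictly decreasing, one concludes that once $v-\mathcal Mv$ vanishes at $x_1$ it cannot vanish again at a larger $x_3$ with a continuation gap in between.

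The main obstacle I anticipate is making the "interface" comparison rigorous: $v$ is only $C^1$ globally (not $C^2$ across $\partial\mathcal C$), so I cannot simply differentiate the QVI through the boundary. The clean way around this is to work with the nonlocal side: on $\mathcal A$ we have $v(x)=v(x+\xi(x))-c_0\xi(x)-c_1$ with $x+\xi(x)\in\mathcal C$ (Proposition~\ref{propempty}), and by Corollary~\ref{Lemma1}\emph{(\ref{ffa})} $v'(x+\xi(x))=c_0$; tracking how the map $x\mapsto x+\xi(x)$ moves, together with the strict concavity-type inequalities coming from semiconvexity of $v$ (Proposition~\ref{prop:semiconvex}) and concavity of $b$, should force $\mathcal A$ to be "connected downward." I would therefore expect the proof to combine (a) the ODE/maximum-principle analysis of $w=v'$ on each continuation component, (b) the algebraic relation defining $\mathcal A$ via $\mathcal M$, and (c) Assumption~\ref{ass:bb} to control the sign of $\rho - b'$ — with step (a)'s boundary-value analysis being the technically delicate part.
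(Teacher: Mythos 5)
Your setup---arguing by contradiction with a continuation gap $(x_0,x_1)$ between two action points, using $v'=c_0$ at the endpoints from Corollary~\ref{Lemma1}, differentiating $\mathcal{L}v=f$ to get an equation for $w=v'$, and observing that Assumption~\ref{ass:bb} enters through the monotonicity of $\rho-b'$---matches the paper's, but the mechanism that is supposed to produce the contradiction is missing, and the route you sketch would fail. First, comparing $w=v'$ with the constant $c_0$ via a maximum principle requires a definite sign for the source term $f'-(\rho-b')c_0=\frac{d}{dx}\bigl(f+c_0b-\rho c_0\,\mathrm{id}\bigr)$ on the gap; Assumptions~\ref{2016-11-06:00} and~\ref{ass:bb} only give that this quantity is strictly decreasing (it may change sign inside the gap), so neither alternative ``$v'>c_0$ throughout'' nor ``$v'<c_0$ throughout'' follows from such a comparison. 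Second, and more seriously, even granting the dichotomy, the case $v'<c_0$ inside the gap is \emph{not} incompatible with anything observable inside the gap: since each $x\in(x_0,x_1)$ lies in $\mathcal{C}$ while $x_1\in\mathcal{A}$, one has $v(x)-v(x_1)>c_0(x-x_1)$ (the paper's \eqref{sa4}), which actually \emph{forces} $v'<c_0$ at points arbitrarily close to $x_1$. So the configuration you must rule out is exactly the one your interior analysis cannot reach.

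The paper closes the argument by comparing \emph{three} points $y_2<x_1<y_1$: a point $y_2\in(x_0,x_1)$ with $v'(y_2)=c_0$ and $v''(y_2)\le 0$, where the classical equation gives $\rho v(y_2)-c_0b(y_2)\le f(y_2)$; the jump target $y_1=x_1+\xi$, $\xi\in\Xi(x_1)$, which lies \emph{beyond} the gap in a different part of $\mathcal{C}$ and where again $v'(y_1)=c_0$, $v''(y_1)\le 0$; and the non-smooth point $x_1$ itself, where one invokes the viscosity \emph{supersolution} property with the affine test function $v(x_1)+c_0(\cdot-x_1)$ to obtain the reverse inequality $\rho v(x_1)-c_0b(x_1)\ge f(x_1)$. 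Combining these with $v(x_1)-v(y_2)<c_0(x_1-y_2)$ and $v(y_1)-v(x_1)\ge c_0(y_1-x_1)$ produces two difference-quotient inequalities for $g:=f+c_0b-\rho c_0\,\mathrm{id}$ of opposite sign, contradicting its strict concavity. Your proposal omits both the use of the jump target $y_1$ (information from outside the continuation component you analyze) and the viscosity test at $x_1$ (unavoidable, since $v$ is only $C^1$ across $\partial\mathcal{C}$ and cannot be differentiated twice there); without these two ingredients the argument does not close.
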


\begin{proof}
Since $\mathcal{A}$ is closed,
it is sufficient to
show that there do not exist points $x_0,x_1\in \R_{++}$, with $x_0<x_1$,  such that 
$x_0,x_1 \in \mathcal{A}$ and $(x_0,x_1)\subset \mathcal{C}.$
Arguing by contradiction, we assume that such points instead exist. 
Given  $x\in(x_0,x_1)$, set 
 $j\coloneqq i-(x_1-x)$  for every $i>0.$
Then, recalling that $x\in \mathcal{C}$, so $v(x)>\mathcal{M}v(x)$,  and  that $x_1\in\mathcal{A}$, hence  $v(x_1)=\mathcal{M}v(x_1)$, we can write
\begin{equation*}
\begin{split}
v(x)&> \mathcal{M}v(x)=\sup_{i>0}\big\{v(x+i)-c_0i-c_1\big\}\geq \sup_{i>x_1-x}\big\{v(x+i)-c_0i-c_1\big\}\\
&=\sup_{j>0}\big\{v(x_1+j)-c_0j-c_1\big\}+c_0(x-x_1)=v(x_1)+c_0(x-x_1),  \ \ \ \forall x\in(x_0,x_1).
\end{split}
\end{equation*}
Therefore 
\begin{equation}\label{sa4}
v(x)-v(x_1)>  c_0(x-x_1) \ \ \ \ \forall x\in(x_0,x_1).
\end{equation}
Due to Proposition \ref{propempty}\emph{(\ref{2016-11-06:06})}, we have 
 for some for some  $y_1>x_1$, $y_1\in\mathcal{C}$,
\begin{equation}\label{sa}
v(x_1)= v(y_1)-c_0(y_1-x_1)-c_1.
\end{equation}
On the other hand, $v\geq \mathcal{M}v$ implies
\begin{equation}\label{sa2}
v(x)\geq v(y_1)-c_0(y_1-x)-c_1 \ \ \ \forall x\in(x_1,y_1).
\end{equation}
Combining \eqref{sa} and \eqref{sa2} we get
\begin{equation}\label{sa3}
v(x)-v(x_1)\geq c_0(x-x_1) \ \ \ \forall x\in(x_1,y_1).
\end{equation}
Then \eqref{sa4} and \eqref{sa3} show that the function
$$\varphi(x)=v(x_1)+ c_0(x-x_1), \ \ \ \ x\in\R_{++},$$
is such that $\varphi(x_1)=v(x_1)$ and $v-\varphi$ has a local minimum at $x_1$. Since $v$ is a viscosity supersolution to \eqref{QVI}, this implies
\begin{equation}\label{11}
\rho v(x_1)-c_0b(x_1)\geq f(x_1).
\end{equation}
 Now, by \eqref{sa4}, there exists $\xi\in(x_0,x_1)$  such that $v'(\xi)<c_0$. Let
$$y_2\coloneqq\sup\left\{x\in [x_0,\xi)\colon\ v'(x)\geq c_0\right\}.$$
The definition above is well posed as $x_0\in\mathcal{A}$, so that by Corollary~\ref{Lemma1}\emph{(\ref{derv})} we have $v'(x_0)=c_0$. Moreover, by continuity of $v'$ and by definition of $y_2$ we have
\begin{equation}\label{pa}
y_2<\xi<x_1, \ \ \ v'(y_2)=c_0, \ \ \ v'(x)<c_0 \ \ \forall x\in (y_2,\xi).
\end{equation}
Therefore, considering that $v$ is twice differentiable in $(x_0,\xi)$ 
 as this interval is contained in $\mathcal{C}$, from \eqref{pa} and by continuity of $v'$ we see that 
\begin{equation}\label{mbv}
v'(y_2)=c_0,\ v''(y_2)\leq 0.
\end{equation}
The equality $\mathcal{L}v=f$ holds
 in classical sense at $y_2$, hence \eqref{mbv} entails
\begin{equation}\label{12}
\rho v(y_2)- c_0b(y_2)\leq f(y_2).
\end{equation}
Combining \eqref{11} with \eqref{12}, we get
\begin{equation}\label{10}
\rho (v(x_1)-v(y_2))-  c_0(b(x_1)-b(y_2))\geq f(x_1)-f(y_2).
\end{equation}
On the other hand,
considering
 \eqref{sa4} 
with $x=y_2$,
and then combining it with \eqref{10}, we get
\begin{equation}\label{14}
\rho c_0(x_1-y_2)
-c_0(b(x_1)-b(y_2))> f(x_1)-f(y_2)
\end{equation}
Now, as $x_1\in \mathcal{A}$,
by \eqref{sa} we have
\begin{equation}\label{zzz}
v(y_1)-c_0(y_1-x_1)-c_1=\sup_{y>x_1} \big\{{v}(y)-c_0(y-x_1)-c_1\big\}.
\end{equation}
The function $v$ is twice differentiable at $y_1$ since $y_1\in\mathcal{C}$, so \eqref{zzz} yields
$$
v'(y_1)=c_0, \ v''(y_1)\leq 0.
$$
Therefore the equality$\mathcal{L}v(y_1)=f(y_1)$ yields the inequality
\begin{equation}
  \label{eq:2017-11-14:00}
  \rho v(y_1)- c_0 b(y_1)\leq f(y_1).
\end{equation}
Combining
\eqref{eq:2017-11-14:00}
with
\eqref{11},
we get
\begin{equation}
  \label{eq:2017-11-14:01}
  \rho (v(y_1)-v(x_1))-  c_0(b(y_1)-b(x_1))\leq f(y_1)-f(x_1).
\end{equation}
On the other hand, from \eqref{zzz} we get
\begin{equation}
  \label{eq:2017-11-14:02}
  v(y_1)-v(x_1)\geq c_0(y_1-x_1).
\end{equation}
So, from 
\eqref{eq:2017-11-14:01}
and
\eqref{eq:2017-11-14:02}
we get
\begin{equation}\label{141}
\rho c_0(y_1-x_1)-c_0(b(y_1)-b(x_1))\leq f(y_1)-f(x_1).
\end{equation}
To conclude, note that \eqref{14} and \eqref{141} are not compatible with the strict concavity of 
\begin{equation*}
\mathbb{R}_{++}\rightarrow \mathbb{R},\
x \mapsto   f(x)+c_0 b(x)-\rho c_0 x
\end{equation*}
which follows from Assumptions~\ref{2016-11-06:00} and \ref{ass:bb}.
\end{proof}

 Under Assumption \ref{ass:bb}, 
Lemma~\ref{lemma:a} and
 Lemma~\ref{2017-04-04:00} provide
\begin{equation}\label{possibilities}
  \begin{split}
    \mbox{either (i)}\quad &\mathcal{C}= \mathbb{R}_{++}\\
\mbox{or  (ii)}\quad & \exists\  r, s, \ \ 0\leq r< s<\infty\colon
  \mathcal{C}=(0,r)\cup (s,\infty).
\end{split}
\end{equation}
Case (i) above corresponds to the case in which the continuation region invades all the state space and it is never convenent to undertake an action. In case (ii) the action region is not empty and there is convenience to undertake an action when the system reaches this region.

Consider the homogeneous ODE
\begin{equation}
  \label{2017-03-30:00}
  \mathcal{L}u=0  \quad \mbox{on}\quad \mathbb{R}_{++}.
\end{equation}
By \cite[Th.~16.69]{Breiman} its general solution is of the form
$$
u=A\psi+B\varphi, \ \ \ A,B\in\R,$$
where  
$\psi, \phi$ are, respectively,  the unique (up to a multiplicative constant) strictly increasing and strictly decreasing solutions to \eqref{2017-03-30:00} and, 
as $0$ and $\infty$ are not accessible boundaries for the reference diffusion $Z$, these fundamental solutions satifsy the
following boundary conditions
\begin{equation}\label{2017-04-05:01}
\psi(0^+)\coloneqq   \lim_{x\rightarrow 0^+}\psi(x)=0, \ \ \ 
  \varphi(0^+)\coloneqq \lim_{x\rightarrow 0^+}\varphi(x)=+\infty, \ \ \ \lim_{x\rightarrow\infty} \psi (x)=+\infty, \ \ \ \lim_{x\rightarrow\infty} \varphi (x)=0. 
\end{equation}  
 Other properties
of these functions can be found on \cite[Sec.~16.11]{Breiman}.
On the other hand, 
the function $\hat v$ defined in \eqref{eq:2017-04-07:03} is the unique solution in $\R_{++}$, within the class of functions having at most linear growth,
to the nonhomogeneous ODE
$\mathcal{L}u=f$
(see \cite[Th.~16.72]{Breiman}: actually in the quoted result the function $f$ is required to be bounded, but the proof works as well in our context within the class of functions having at most linear growth).  
It follows that every classical solution to 
\begin{equation}\label{eq:homo}
\mathcal{L}u=f, \ \ \ \mbox{over} \ \mathcal{I}\subset \R_{++},
\end{equation} where $\mathcal{I}$ is an open interval, must have the form $u=A\psi+B \varphi +\hat v$. 
Therefore, as by Proposition \ref{viscosoluQVI} and Theorem  \ref{th:viscosoluQVI} the value function 
$v$ solves in classical sense \eqref{eq:homo}, according to the two possibilities of \eqref{possibilities}, in case (i) there must  exist
real numbers
$A,B$
 such that
\begin{equation}
  \label{2017-04-04:01}
  \begin{aligned}
  &   \ \ \ v=\hat{v}+A\psi+B\varphi
    \ \ \ \mbox{on}\quad\mathbb{R}_{++};&&
    \end{aligned}
    \end{equation}
    in case (ii) there must  exist
real numbers
    $A_r,B_r,A_s,B_s$
    \begin{equation}\label{2017-04-04:01bis}
    \begin{aligned}
   & \begin{dcases}
      v=\hat{v}+A_r\psi+B_r\varphi& \quad\mbox{on}\quad (0,r),
      \\
      v=\hat{v}+A_s\psi+B_s\varphi& \quad\mbox{on}\quad
      (s,\infty).
    \end{dcases}&&
  \end{aligned}
\end{equation}

\begin{Proposition}\label{pps}   Let  Assumption \ref{ass:bb} hold. According to the cases \emph{(i)} and \emph{(ii)} of \eqref{possibilities} we have, respectively:
\begin{enumerate}[-]
\item
if case \emph{(i)}
holds,
then
 $v\equiv\hat v$, hence $A=B=0$ in \eqref{2017-04-04:01};   
\item 
if case \emph{(ii)}
holds,
then
${\displaystyle\lim_{x\rightarrow\infty}} (v(x)-\hat v(x))=0$ and $A_s=B_r=0$, $A_r,B_s\geq 0$ in \eqref{2017-04-04:01bis}.
\end{enumerate}
\end{Proposition}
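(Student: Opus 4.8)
The plan is to handle the two alternatives of \eqref{possibilities} separately. Case~(i) reduces to a uniqueness statement for a linear ODE; case~(ii) rests on a strong‑Markov decomposition of both $v$ and $\hat v$ at the first time the uncontrolled process reaches the level $s$.

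\emph{Case (i).} Here $\mathcal{C}=\mathbb{R}_{++}$, so by Proposition~\ref{viscosoluQVI} and Theorem~\ref{th:viscosoluQVI} the function $v$ is of class $C^2$ on $\mathbb{R}_{++}$ and satisfies $\mathcal{L}v=f$ there in the classical sense. Moreover $v$ has at most linear growth on $\mathbb{R}_{++}$: it is bounded near $0$ because it is nondecreasing (Proposition~\ref{prop:mono}), and $\limsup_{x\to\infty}v(x)/x=0$ by Proposition~\ref{prop:preB}. Since $\hat v$ is the \emph{unique} solution of $\mathcal{L}u=f$ within the class of functions of at most linear growth (\cite[Th.~16.72]{Breiman}), we conclude $v\equiv\hat v$. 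Substituting this into \eqref{2017-04-04:01} gives $A\psi+B\varphi\equiv 0$ on $\mathbb{R}_{++}$, and since $\psi,\varphi$ form a fundamental system for \eqref{2017-03-30:00} (hence are linearly independent), $A=B=0$.

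\emph{Case (ii), the branch on $(0,r)$} (present only when $r>0$). I let $x\to 0^+$ in $v=\hat v+A_r\psi+B_r\varphi$. The left‑hand side stays bounded (indeed $v(0^+)$ exists and is finite, $v$ being nondecreasing and nonnegative); on the right, $\psi(0^+)=0$ and $\varphi(0^+)=+\infty$ by \eqref{2017-04-05:01}, while $\hat v(0^+)=0$ by dominated convergence (as $x\downarrow 0$ one has $X_t^{x,\emptyset}\downarrow 0$ pathwise by continuity in the initial datum, and $f(0^+)=0$). Hence $B_r=0$. Then $A_r\psi=v-\hat v\ge 0$ on $(0,r)$ by \eqref{eq:2017-04-07:03}, and since $\psi>0$ on $\mathbb{R}_{++}$ this forces $A_r\ge 0$.

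\emph{Case (ii), the branch on $(s,\infty)$.} Since $\mathcal{A}=\mathbb{R}_{++}\setminus\mathcal{C}$ is closed and $s\notin\mathcal{C}$, we have $s\in\mathcal{A}$; moreover, for $x>s$ the uncontrolled process $Z^{0,x}$ stays in $(s,\infty)\subset\mathcal{C}$ until it hits $s$, and because $0$ and $\infty$ are natural boundaries (Appendix) $Z^{0,x}$ is recurrent, so $\tau_{x,s}<\infty$ $\P$-a.s. and $Z^{0,x}_{\tau_{x,s}}=s$. Specialising \eqref{2015-11-24:00} to $\tau=\tau_{x,s}$ and using — exactly as in the DPP‑rewriting step of the subsolution part of Proposition~\ref{viscosoluQVI} — that it is never optimal to intervene while the state lies in the continuation region $(s,\infty)$, one obtains
\[
v(x)=\mathbb{E}\!\left[\int_0^{\tau_{x,s}}e^{-\rho t}f(Z^{0,x}_t)\,dt+e^{-\rho\tau_{x,s}}v(s)\right],\qquad x>s,
\]
whereas the strong Markov property (time‑homogeneity, $Z^{0,x}_{\tau_{x,s}}=s$) gives the same identity with $v(s)$ replaced by $\hat v(s)$. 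Subtracting, and recalling the classical formula $\mathbb{E}[e^{-\rho\tau_{x,s}}]=\varphi(x)/\varphi(s)$ for $x\ge s$, we get $v(x)-\hat v(x)=\bigl(v(s)-\hat v(s)\bigr)\varphi(x)/\varphi(s)$ on $(s,\infty)$. Comparing with $v-\hat v=A_s\psi+B_s\varphi$ and using linear independence of $\psi,\varphi$ yields $A_s=0$ and $B_s=(v(s)-\hat v(s))/\varphi(s)$, which is $\ge 0$ because $v\ge\hat v$ (eq.~\eqref{eq:2017-04-07:03}) and $\varphi(s)>0$; finally $v(x)-\hat v(x)=B_s\varphi(x)\to 0$ as $x\to\infty$ since $\varphi(\infty)=0$.

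\emph{Where the work is.} The only step requiring genuine care is the identity for $v(x)$ on $(s,\infty)$, i.e.\ the claim that an optimal (or $\delta$-optimal) control never acts while the state is in $\mathcal{C}$: this has to be extracted from \eqref{2015-11-24:00} together with the strict inequality $v>\mathcal{M}v$ on $(s,\infty)$ and the supermartingale property of $e^{-\rho t}v(X^{x,I}_t)+\int_0^t e^{-\rho u}f(X^{x,I}_u)\,du-\sum_{\tau_n\le t}e^{-\rho\tau_n}(c_0i_n+c_1)$, along the lines already used for Proposition~\ref{viscosoluQVI}; the integrability/uniform‑integrability checks needed to pass to the limit at $\tau_{x,s}$ and at $\infty$ are routine given Proposition~\ref{prop:preB}. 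The remaining ingredients — recurrence of $Z^{0,x}$ and the Laplace‑transform formula $\mathbb{E}[e^{-\rho\tau_{x,s}}]=\varphi(x)/\varphi(s)$ — are standard once the boundaries are known to be natural.
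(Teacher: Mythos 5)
Your overall conclusions are right, and for the main part of the statement --- the branch of case (ii) on $(s,\infty)$ --- you take a genuinely different route from the paper. The paper applies It\^o/Dynkin to $e^{-\rho t}v(Z^{0,x}_t)$ up to $\tau_{x,s}\wedge t$ (using only that $v\in C^2((s,\infty))$ and $\mathcal{L}v=f$ there), obtains the \emph{inequality} $v(x)\leq \E[\int_0^{\tau_{x,s}}e^{-\rho t}f\,dt]+\E[e^{-\rho\tau_{x,s}}\mathbf{1}_{\{\tau_{x,s}<\infty\}}]v(s)$, compares with $\hat v$, and uses $\tau_{x_n}\to\infty$ to get $\lim_{x\to\infty}(v-\hat v)=0$, from which $A_s=0$, $B_s\geq0$ follow via \eqref{2017-04-05:01}. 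Your version pushes the same decomposition to an exact identity and extracts the stronger conclusion $v-\hat v=B_s\varphi$ with $B_s=(v(s)-\hat v(s))/\varphi(s)$ explicitly; that is a nice refinement. Your case (i) is a legitimate shortcut through the uniqueness statement the paper itself asserts before the proposition, and your $(0,r)$ branch coincides with the paper's.

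Two steps need repair. First, natural boundaries do \emph{not} imply recurrence: a diffusion with both endpoints natural can be transient (e.g.\ geometric Brownian motion with large positive drift, which is admissible here since only $\rho>\nu^+$ is imposed), so you cannot assert $\tau_{x,s}<\infty$ $\P$-a.s.\ nor $Z^{0,x}_{\tau_{x,s}}=s$ outright. This is salvageable: the Laplace-transform formula holds in the form $\E[e^{-\rho\tau_{x,s}}\mathbf{1}_{\{\tau_{x,s}<\infty\}}]=\varphi(x)/\varphi(s)$ regardless of recurrence, and on $\{\tau_{x,s}=\infty\}$ the terminal term must be shown to vanish via the transversality estimate (the paper does exactly this by splitting over $\{\tau<t\}$ and $\{\tau\geq t\}$ and using \eqref{estimateB} and \eqref{eq:2017-09-18:00}); with that rewriting your subtraction argument goes through. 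Second, deriving the equality $v(x)=\E[\int_0^{\tau_{x,s}}e^{-\rho t}f\,dt+e^{-\rho\tau_{x,s}}v(s)]$ from \eqref{2015-11-24:00} plus ``it is never optimal to act in $\mathcal{C}$'' is the weakest link: that claim at the exit time of the whole continuation region is essentially the verification theorem, which is only established later. You do not need it: since $v\in C^2((s,\infty))$ and $\mathcal{L}v=f$ there (Theorem~\ref{th:viscosoluQVI}), It\^o's formula applied to $e^{-\rho t}v(Z^{0,x}_t)$ on $[0,t\wedge\tau_{x,s}]$, followed by the same transversality limit, yields the identity directly and uses nothing about optimal controls. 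With these two substitutions your argument is sound.
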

\begin{proof}

Assume that case (i) holds.
As $\mathcal{L}v=f$ on $\mathcal{C}=\R_{++}$,
by a standard localization procedure we get (see, e.g., the proof of Proposition \ref{prop:preB})
\begin{equation}
  \label{eq:2017-11-14:03}
  v(x)
=\E\left[\int_0^{t} e^{-\rho s}f(X^{x,\emptyset}_s)ds\right] 
+\E\left[e^{-\rho t} v (X_t^{x,\emptyset})\right]\qquad\forall t\in\R_+.
\end{equation}
We
 pass to the limit
$t\rightarrow \infty$
on the
first addend of the right hand side
by using
the monotone convergence theorem.
 As for the second addend, 
we use
 \eqref{estimateB}
and
 \eqref{eq:2017-09-18:00} 
with $I=\emptyset$ 
to write
$$
0\leq \E\left[e^{-\rho t} v (X_t^{x,\emptyset})\right]\leq e^{-\rho t}\frac{f^*(\alpha)}{\rho} + \frac{\alpha}{\rho} x \qquad\forall \alpha\in (0,c_0\rho].
$$ 
Then 
$$0\leq \limsup_{t\to\infty} \ \E\left[e^{-\rho t} v (X_t^{x,\emptyset})\right]\leq \frac{\alpha}{\rho} x \qquad\forall \alpha\in (0,c_0\rho].$$
By arbitrariness of  $\alpha$ we conclude that 
$$\lim_{t\to\infty}  \E\left[e^{-\rho t} v (X_t^{x,\emptyset})\right]=0.$$
Hence
\begin{equation}
  \label{eq:2017-11-14:05}
  v(x)=\E\left[\int_0^\infty e^{-\rho s}f(X^{x,\emptyset}_s)ds\right].
\end{equation}
By definition of $\hat v$ and by the inequality $v\geq \hat{ v}$, this proves the claim.

Now assume that case (ii) holds.
For each $x>s$  set 
$\tau_{x}\coloneqq \inf\left\{t\geq 0\colon X^{x,\emptyset}_t\leq s\right\}$. As $\infty$ is a natural boundary  for $Z^{0,x}=X^{x,\emptyset}$, by \eqref{natural} we have  
\begin{equation}\label{lima2}
 \lim_{x\rightarrow\infty}\P\{\tau_{x}\geq M\}=1 \ \ \ \forall M>0.
\end{equation}
If $0<x<x'$, \red{by \eqref{ineq} with $I=\emptyset$} we get
$$
\mathbb{P}\mbox{-a.s.},\
 X^{x,\emptyset}_t\leq X_t^{x',\emptyset}\ \mbox{for all $t\geq 0$},
$$
so, we also have $\tau_x\leq \tau_{x'}$ $\mathbb{P}$-a.s..
If $\{x_n\}_{n\in \mathbb{N}}$ is a sequence diverging to $\infty$,
we then have
\begin{equation}\label{lima}
 \lim_{n\to\infty}\tau_{x_n}=\infty\qquad \mathbb{P}\mbox{-a.s..}
\end{equation}
As $\mathcal{L}v=f$ on $(s,\infty)$, 
as
for \eqref{eq:2017-11-14:03},
we get
\begin{equation}
  \label{eq:2017-05-18:04}
  v(x_n)=\E\left[\int_0^{\tau_{x_n}\wedge t} e^{-\rho \zeta}f(X^{x_n,\emptyset}_\zeta)d\zeta\right]
+
\E\left[e^{-\rho (\tau_{x_n}\wedge t)} v (X^{x_n,\emptyset}_{t\wedge \tau_{x_n}})\right]
 \qquad \forall t\in\R_+,\ n\in \mathbb{N}.
\end{equation}
Therefore, splitting over $\{\tau_{x_n}< t\}$ and  $\{\tau_{x_n}\geq t\}$ the second addend on the right hand side, 
 \begin{equation*}
  \begin{split}
        v(x_n)
&=\E\left[\int_0^{\tau_{x_n}\wedge t} e^{-\rho \zeta}f(X^{x_n,\emptyset}_\zeta)d\zeta\right]
+ \E\left[\mathbf{1}_{\{\tau_{x_n}\geq t\}}
e^{-\rho  t} v (X^{x_n,\emptyset}_t)\right]
+\E\left[\mathbf{1}_{\{\tau_{x_n}<t\}}e^{-\rho (\tau_{x_n}\wedge t)} v (X^{x_n,\emptyset}_{t\wedge \tau_{x_n}})\right]\\
&\leq
\E\left[\int_0^{\tau_{x_n}\wedge t} e^{-\rho \zeta}f(X^{x_n,\emptyset}_\zeta)d\zeta\right]
+ \E\left[
\mathbf{1}_{\{\tau_{x_n}\geq t\}} e^{-\rho  t} v (X^{x_n,\emptyset}_t)\right]
+
\mathbb{E}[e^{-\rho \tau_{x_n}}\mathbf{1}_{\{\tau_{x_n}<t\}}]v(s).
\end{split}
\end{equation*}
for all $t\geq 0$.
Now we pass to the limit $t\rightarrow \infty$
by using the same arguments 
used to obtain
\eqref{eq:2017-11-14:05},
and we get
\begin{equation*}
  v(x_n)
\leq
\E\left[\int_0^{\tau_{x_n}} e^{-\rho \zeta}f(X^{x_n,\emptyset}_\zeta)d\zeta\right]
+\E[e^{-\rho \tau_{x_n}} \mathbf{1}_{\{\tau_{x_n}<\infty\}}]v (s).
\end{equation*}
Then, the definition of $\hat v$ provides
\begin{equation*}
    v(x_n)-\E[e^{-\rho \tau_{x_n}} \mathbf{1}_{\{\tau_{x_n}<\infty\}}]v (s)
\leq\hat  v(x_n)-\E\left[\mathbf{1}_{\{\tau_{x_n}<\infty\}}\int_{\tau_{x_n}}^{\infty} e^{-\rho \zeta}f(X_\zeta^{x_n,\emptyset})d\zeta\right]\leq 
\hat{ v}(x_n).
\end{equation*}
Using \eqref{lima}
and recalling that $v \geq \hat{v}$,
 we 
conclude
$ \displaystyle\lim_{n\rightarrow\infty} (v(x_n)-\hat v(x_n))=0$.
Since the sequence $\{x_n\}_{n\in \mathbb{N}}$ was arbitrary, we conclude
 \begin{equation}\label{lil}
 \displaystyle\lim_{x\rightarrow \infty}(v(x)-\hat{ v}(x))=0.
 \end{equation}
From \eqref{2017-04-05:01} and \eqref{lil} we have $A_s=0$ and $B_s\geq 0$. 
Finally, since $v\geq \hat v$ and $v$ is finite in $(0,r)$, from  \eqref{2017-04-05:01} we have $A_r\geq 0$ and $B_r=0$.
\end{proof}
Set  $$\hat{v}^*(z)\coloneqq \sup_{x>0} \big\{\hat v(x)-zx\big\}, \ \ \ \ \ z\in\R_{++}.$$
We are going to introduce an assumption, requiring that $c_1$ is not too large, that guarantees, at once, 
that the action region is not empty and that the structure of the continuation and action regions are
$$
\mathcal{A}=(0,s]  \quad \mbox{and} \quad \mathcal{C}=(s,\infty) 	\quad \mbox{for some} \ s>0.
$$   
Under this nice structure, it turns out that it is convenient to undertake an action when the system lies below a given threshold and lat it evolve autonomously when the system lies above this threshold. Henceforth, we will call this threshold \emph{trigger boundary}.
\begin{Assumption}\label{Assumption:c0c1}
 $c_1<{ \hat{ v}^*}(c_0)$.
\end{Assumption}
The following result provides a way to check explicitly the validity of Assumption \ref{Assumption:c0c1}.
\begin{proposition}\label{prop:ass2} Let 
$f(x)\geq K x^\gamma$ for some $K>0$, $\gamma\in(0,1)$, and set $K':= \frac{\gamma K}{\rho +\gamma L_b+\frac{1}{2}\gamma(1-\gamma)L_\sigma^2}$. Then
$$\hat{v}^*(c_0)=  K'\frac{1-\gamma}{\gamma}\left(\frac{c_0}{K'}\right)^{\frac{\gamma}{\gamma-1}}.
$$
\end{proposition}
\begin{proof}
Let $x\in\R_{++}$. With a localization procedure similar to the one of the proof of Proposition \ref{prop:preB}, we get from It\^o's formula
\begin{eqnarray*}
&&\E\left[e^{-\rho t}\big|X^{x,\emptyset}_t\big|^{\gamma}\right]\\&&= x^{\gamma}+\E\left[\int_0^t e^{-\rho s} \left[-\rho \big(X^{x,\emptyset}_s\big)^{\gamma}+\gamma\big(X^{x,\emptyset}_s\big)^{\gamma-1} b(X_s^{x,\emptyset}) +
 \frac{1}{2} \gamma(\gamma-1)\big(X^{x,\emptyset}_s\big)^{\gamma-2} \sigma^2(X_s^{x,\emptyset})
\right] ds\right]\\
&&\geq x^{\gamma}+\E\left[\int_0^t e^{-\rho s}\left[-\rho  \big(X^{x,\emptyset}_s\big)^{\gamma}-{L}_b(1-\gamma)\,\big(X^{x,\emptyset}_s\big)^{\gamma} -
 \frac{1}{2} {L}^2_\sigma \gamma(1-\gamma)\big(X^{x,\emptyset}_s\big)^{\gamma} 
\right] ds\right].
\end{eqnarray*}
Then we get  
$$
\E\left[e^{-\rho t}\big(X^{x,\emptyset}_t\big)^{\gamma}\right]\geq x^\gamma e^{-\left(\rho +\gamma L_b+\frac{1}{2}\gamma(1-\gamma)L_\sigma^2\right)t}, \ \ \ \ \forall t\in\R_+.
$$
From that and from the assumption on $f$, we obtain
$$
\hat{v} (x)\geq \frac{K}{\rho +\gamma L_b+\frac{1}{2}\gamma(1-\gamma)L_\sigma^2} x^\gamma=\frac{K'}{\gamma} x^\gamma, \ \ \ \ \forall x\in\R_{++}.
$$
Hence, 
$$\hat{v}^*(c_0):=\sup_{x>0}\big\{\hat{v}(x)-c_0 x\big\} \geq \sup_{x>0}\Bigg\{\frac{K'}{\gamma}x^\gamma -c_0 x\Bigg\}=  K'\frac{1-\gamma}{\gamma}\left(\frac{c_0}{K'}\right)^{\frac{\gamma}{\gamma-1}}.\vspace{-.8cm}
$$
\end{proof}
\medskip
 \begin{proposition}
   \label{2017-04-07:00}
   Let  Assumptions \ref{ass:bb} and  \ref{Assumption:c0c1} hold.
Then there exists $s>0$ such that $\mathcal{C}=(s,\infty)$  and, consequently, $\mathcal{A}=(0,s]$.
 \end{proposition}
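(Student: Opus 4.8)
The plan is to rule out the two unwanted configurations in \eqref{possibilities} and then to show that in case (ii) we must have $r=0$. First I would dispose of case (i): if $\mathcal{C}=\R_{++}$, then Proposition~\ref{pps} gives $v\equiv\hat v$. But then $\mathcal{M}v(x)=\sup_{i>0}\{\hat v(x+i)-c_0i-c_1\}$, and taking $x\to 0^+$ together with the definition of $\hat v^*$ and Assumption~\ref{2016-11-06:00} (which gives $\hat v(0^+)=0$ and lets one push the supremum point away from the origin) one obtains, for $x$ small enough, $\mathcal{M}v(x)\geq \hat v^*(c_0)-c_1+(\text{something nonnegative})>0=\hat v(0^+)\geq v(x)$ up to a continuity/limiting argument; more precisely one shows $\mathcal{M}\hat v(x_0)>\hat v(x_0)$ for some small $x_0$, contradicting \eqref{disM}. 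This is exactly where Assumption~\ref{Assumption:c0c1}, $c_1<\hat v^*(c_0)$, is used. Hence case~(ii) holds: $\mathcal{C}=(0,r)\cup(s,\infty)$ with $0\le r<s<\infty$, and in particular $\mathcal{A}=[r,s]$ is a nonempty interval.

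Next I would show $r=0$, i.e. that $\mathcal{A}$ has no left endpoint bounded away from $0$. Suppose $r>0$. Then $(0,r)\subset\mathcal{C}$, so by \eqref{2017-04-04:01bis} and Proposition~\ref{pps} we have $v=\hat v+A_r\psi$ on $(0,r)$ with $A_r\ge 0$ (recall $B_r=0$). On the other hand $r\in\mathcal{A}$, so by Corollary~\ref{Lemma1}\emph{(\ref{derv})} we have $v'(r)=c_0$, and by Proposition~\ref{propempty} there is $\zeta\in\Xi(r)$ with $r+\zeta\in\mathcal{C}$, necessarily $r+\zeta>s$, and $v'(r+\zeta)=c_0$ as well. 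I would then exploit the boundary behaviour $\psi(0^+)=0$, $\varphi(0^+)=+\infty$ from \eqref{2017-04-05:01}: since $v$ is finite and nonnegative near $0$ and $\hat v(0^+)=0$, the decreasing solution $\varphi$ is excluded (this is already built into $B_r=0$), and the remaining freedom is the single constant $A_r$. The point $r$ being in the action region forces, via $v\ge\mathcal{M}v$ and the argument of Lemma~\ref{2017-04-04:00} applied at the left endpoint, a sign condition on the curvature of $x\mapsto f(x)+c_0b(x)-\rho c_0 x$ together with the smooth-fit $v'(r)=c_0$; combined with the analogous condition at the contact point $r+\zeta\in\mathcal{C}$ (where $\mathcal{L}v=f$ holds classically, $v'=c_0$, $v''\le 0$), strict concavity of $x\mapsto f(x)+c_0 b(x)-\rho c_0 x$ (Assumptions~\ref{2016-11-06:00}, \ref{ass:bb}) is violated exactly as in the proof of Lemma~\ref{2017-04-04:00}. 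Hence $r=0$, giving $\mathcal{C}=(s,\infty)$ and $\mathcal{A}=(0,s]$.

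Finally I would confirm $s>0$, i.e. $\mathcal{A}\neq\emptyset$: if $\mathcal{A}=\emptyset$ we are in case (i), already excluded by Assumption~\ref{Assumption:c0c1} in the first step. By Lemma~\ref{lemma:a} the action region also cannot be all of $\R_{++}$, so $0<s<\infty$, which completes the proof.

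The main obstacle I expect is the $r=0$ step. Ruling out a two-sided action interval $[r,s]$ with $r>0$ requires simultaneously using the explicit form $v=\hat v+A_r\psi$ on $(0,r)$ with the sign constraint $A_r\ge 0$, the smooth-fit condition at $r$, the existence and location of the optimal jump target $r+\zeta>s$, and a curvature comparison of the map $x\mapsto f(x)+c_0b(x)-\rho c_0 x$ between $r$ and $r+\zeta$ — essentially a re-run of the incompatibility argument in Lemma~\ref{2017-04-04:00} but now anchored at the \emph{left} boundary of $\mathcal{A}$ rather than at an interior gap. Getting the inequalities to point the right way there, and making sure the constant $A_r\ge 0$ does not spoil the sign of $v(r)-\mathcal{M}v(r)$ near the origin, is the delicate part; everything else is a combination of Propositions~\ref{pps}, \ref{propempty}, Corollary~\ref{Lemma1}, and Lemma~\ref{lemma:a}.
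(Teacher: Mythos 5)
Your first step (excluding $\mathcal{C}=\R_{++}$) is correct and is in fact the paper's key observation: by monotonicity of $\hat v$ one has $\mathcal{M}\hat v(x)\geq \sup_{i>0}\{\hat v(i)-c_0i-c_1\}=\hat v^*(c_0)-c_1>0$ for every $x>0$, while $\hat v(0^+)=0$, so $v\equiv\hat v$ would violate $v\geq\mathcal{M}v$ near the origin. The gap is in your second step. You propose to rule out $r>0$ by ``re-running the incompatibility argument of Lemma~\ref{2017-04-04:00} anchored at the left endpoint of $\mathcal{A}$'', but that argument cannot be transplanted: it needs an action point on \emph{each} side of the continuation gap. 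Concretely, the point $y_2$ in that proof --- where $v'(y_2)=c_0$ and $v''(y_2)\leq 0$, yielding $\rho v(y_2)-c_0b(y_2)\leq f(y_2)$, which is then played off against the supersolution inequality at the right action point --- is constructed starting from $v'(x_0)=c_0$ at the \emph{left} action point $x_0$. When the continuation component is $(0,r)$ there is no such left anchor, and you offer no substitute; the ``sign condition on the curvature'' you invoke at $r$ is never actually derived. As written, the $r=0$ step is not a proof, and you concede as much.

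The irony is that you already hold the missing piece. The computation you use to kill case (i) shows, independently of the structure of $\mathcal{C}$, that $\lim_{x\to 0^+}v(x)\geq \lim_{x\to 0^+}\mathcal{M}\hat v(x)\geq\hat v^*(c_0)-c_1>0$. If $(0,r)\subset\mathcal{C}$ with $r>0$, then by Proposition~\ref{pps} one has $v=\hat v+A_r\psi$ on $(0,r)$ (with $B_r=0$), and since $\psi(0^+)=0$ and $\hat v(0^+)=0$ this forces $v(0^+)=0$ --- a direct contradiction, requiring no curvature comparison, no smooth fit at $r$, and no analysis of the jump target $r+\zeta$. This is exactly the paper's proof. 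Your worry about ``$A_r\geq 0$ spoiling the sign near the origin'' is moot, since $A_r\psi(0^+)=0$ whatever the sign of $A_r$; the sign constraints on $A_r,B_r$ matter only to know that the $\varphi$-term is absent.
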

 \begin{proof}
   First, notice that, as  $\hat{ v}$
satisfies \eqref{estimateB}, it follows that  ${ \hat{ v}^*}$ is finite on $\mathbb{R}_{++}$.
Considering that $v\geq \hat{ v}$ and that $\hat{ v}$ is nondecreasing, we have
\begin{equation}\label{2017-04-07:04}
\begin{split}
\lim_{x\rightarrow 0^+}v(x)&\geq 
\lim_{x\rightarrow 0^+}\mathcal{M}v(x)
\geq
\lim_{x\rightarrow 0^+}\mathcal{M}\hat{ v}(x)= \lim_{x\rightarrow 0^+}\sup_{i>0}\big\{\hat{ v}(x+i)-c_0i-c_1\big\}\\
& \geq \lim_{x\rightarrow 0^+}
\sup_{i>0}
\big\{\hat{v}(i)-c_0i-c_1\big\}
=\hat{v}^*(c_0)-c_1>0.
\end{split}
\end{equation}
Now assume by contradiction  that $(0,r)\subset \mathcal{C}$, for some $r>0$.
By Proposition \ref{pps} we have
 $$v(x)=\hat{ v}(x)+A_r \psi(x), \ \ \ \ x\in (0,r),$$ for some $A_r\geq 0$.
Then, as $\psi(0^+)=0$, we must have  $v(0^+)=\hat{v}(0^+)=0$. The latter  contradicts 
\eqref{2017-04-07:04}, hence we conclude.
\end{proof}

Under Assumptions \ref{ass:bb} and \ref{Assumption:c0c1},
  the structure of $\mathcal{C}$ and $\mathcal{A}$ established by Proposition \ref{2017-04-07:00} joined 
with Proposition~\ref{pps} provides the following  structure for $v$:  for some $B= B_s\geq 0$
\begin{equation}\label{vbis}
 v(x)=\begin{dcases}
B\varphi(x)+\hat{v}(x),& \mbox{if} \ x\in (s,\infty),\\
 B\varphi(s)+\hat{v}(s)-c_0(s-x), &\mbox{if} \ x\in (0,s].
 \end{dcases}
\end{equation}
%
\begin{lemma}\label{2017-05-22:01}
Let Assumption \ref{ass:bb} hold.  Let  $a\geq 0$ and let $u\in C^2((a,\infty);\mathbb{R})$  satisfy $\mathcal{L}u=f$
on $(a,\infty)$.
  If $x_0\in (a,\infty)$ is a local minimum point for $u'$, 
then
  $u'(x_0)> 0$
 and
there is no local maximum 
point for $u'$ in $(x_0,\infty)$.
\end{lemma}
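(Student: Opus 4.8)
The plan is to reduce everything to sign information for $w:=u'$ at its critical points. First I would upgrade the regularity of $u$: solving $\mathcal{L}u=f$ for $u''$ gives $u''=\tfrac{2}{\sigma^2}(\rho u-bu'-f)$, whose right-hand side is $C^1$ on $(a,\infty)\subseteq\R_{++}$ (here $b,\sigma^2\in C^1$ and $\sigma^2>0$ by Assumption~\ref{eq:2017-03-23:00}, and $f\in C^1$ by Assumption~\ref{2016-11-06:00}), so $u\in C^3((a,\infty))$ and $w\in C^2((a,\infty))$. Differentiating $\rho u-bu'-\tfrac12\sigma^2u''=f$ and rewriting in terms of $w$ yields
\begin{equation*}
\tfrac12\sigma^2(x)\,w''(x)+\big(b(x)+\sigma(x)\sigma'(x)\big)w'(x)=\big(\rho-b'(x)\big)w(x)-f'(x),\qquad x\in(a,\infty),
\end{equation*}
so at any point $x_*$ with $w'(x_*)=0$ the sign of $w''(x_*)$ equals the sign of $\big(\rho-b'(x_*)\big)w(x_*)-f'(x_*)$.

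Next I would set $\theta:=\dfrac{f'}{\rho-b'}$ and record two facts, using $b'\le M_b<\rho$ (Assumption~\ref{ass:rho}) and $f'>0$ (Assumption~\ref{2016-11-06:00}). If $x_*$ is a local minimum of $w$ then $w''(x_*)\ge0$, hence $\big(\rho-b'(x_*)\big)w(x_*)\ge f'(x_*)>0$, giving $w(x_*)>0$ and $w(x_*)\ge\theta(x_*)$; if $x_*$ is a local maximum of $w$ then $w''(x_*)\le0$, so $w(x_*)\le\theta(x_*)$. The first of these, applied at $x_0$, already yields $u'(x_0)=w(x_0)>0$, the first assertion. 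The key structural point for the rest is that $\theta$ is \emph{strictly decreasing} on $(a,\infty)$: $f'$ is positive and strictly decreasing (Assumption~\ref{2016-11-06:00}), while $b'$ is nonincreasing by Assumption~\ref{ass:bb}, so $\rho-b'$ is positive and nondecreasing, and the quotient of a positive strictly decreasing function by a positive nondecreasing one is strictly decreasing.

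For the non-existence of a local maximum of $w$ to the right of $x_0$ I would argue by contradiction: suppose $y\in(x_0,\infty)$ is a local maximum of $w$, and let $z\in[x_0,y]$ realize $\min_{[x_0,y]}w$ (a minimum over a compact interval of a continuous function). Combining the two facts above with the monotonicity of $\theta$,
\begin{equation*}
w(x_0)\ge\theta(x_0)>\theta(y)\ge w(y)\ge w(z),
\end{equation*}
so $z\neq x_0$. If $z\in(x_0,y)$, then $z$ is an interior local minimum of $w$, whence $w(z)\ge\theta(z)>\theta(y)\ge w(y)\ge w(z)$, a contradiction. If instead $z=y$, then $w\ge w(y)$ on $[x_0,y]$, and together with $w\le w(y)$ on a neighborhood of $y$ this forces $w\equiv w(y)$ on some interval $(y-\delta,y)\subset(x_0,y)$; every point $z'$ there is then a local minimum of $w$, so $w(y)=w(z')\ge\theta(z')>\theta(y)\ge w(y)$, again a contradiction.

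The routine points are the $C^3$ bootstrap and the endpoint case $z=y$ (handled by the observation that $w$ would be locally constant, contradicting the strict monotonicity of $\theta$); the substantive ingredient is precisely that strict monotonicity of $\theta=f'/(\rho-b')$, which is where the concavity of $b$ (Assumption~\ref{ass:bb}) combines with the strict decrease of $f'$.
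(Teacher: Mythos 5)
Your proof is correct and rests on the same two inequalities as the paper's: differentiating $\mathcal{L}u=f$ and using the second-order conditions at critical points of $u'$ to get $(\rho-b'(x_*))u'(x_*)\geq f'(x_*)$ at a local minimum and the reverse at a local maximum, then playing these off against the strict decrease of $f'$ and the monotonicity of $b'$ from Assumption \ref{ass:bb}. The one genuine difference is organizational, and it is to your credit: the paper disposes of the comparison between $u'(x_0)$ and $u'(x_1)$ with a terse ``without loss of generality, $u'(x_0)\leq u'(x_1)$,'' whose justification really requires relocating the minimum of $u'$ on $[x_0,x_1]$ and handling the degenerate case where it sits at the endpoint $x_1$; your argument via the strictly decreasing function $\theta=f'/(\rho-b')$ and the explicit case analysis on where $\min_{[x_0,y]}w$ is attained (including the locally-constant endpoint case) makes exactly that step rigorous. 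So the substance is the same, but your write-up closes a small gap that the paper leaves implicit.
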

  \begin{proof}
 As $b,\sigma, \red{f}\in C^1(\R_{++};\R)$, from 
 \begin{equation}
   \label{eq:2017-05-29:00}
   \rho u(x)=b(x)u'(x)+\frac{1}{2}\sigma^2(x)u''(x)+f(x), \ \ \forall x\in (a,\infty),
 \end{equation}
 we obtain $u''\in C^1((a,\infty);\R)$, \red{i.e.\ $u\in C^3((a,\infty);\R)$. We differentiate \eqref{eq:2017-05-29:00} getting} 
\begin{equation}\label{ppp}
\rho u'(x)= b'(x)u'(x)+b(x)u''(x)+\frac{1}{2} \sigma^2(x)u'''(x)+\sigma\sigma'(x)u''(x)+f'(x), \qquad\forall x\in(a,\infty).
\end{equation}  
Let $x_0\in (a,\infty)$ be a local minimum point for $u'$.
Then $u''(x_0)=0$ and $u'''(x_0)\geq 0$ so, by \eqref{ppp}, we have
\begin{equation}
  \label{eq:2017-05-29:01}
  \rho u'(x_0)\geq b'(x_0)u'(x_0)+f'(x_0).
\end{equation}
Note that from \eqref{eq:2017-05-29:01}, using 
Assumptions~\ref{2016-11-06:00}
and \ref{ass:rho}, we obtain 
$u'(x_0)>0$.
Now, arguing by contradiction, assume that $x_1\in (x_0,\infty)$  is local maximum point for $u'$. Then  $u''(x_1)=0$ and $u'''(x_1)\leq 0$, so,
by \eqref{ppp}, we have
\begin{equation}\label{max}
\rho u'(x_1)\leq  b'(x_1)u'(x_1)+f'(x_1).
\end{equation}
Without loss of generality, we can assume that
\begin{equation}\label{primo}
  u'(x_0)\leq u'(x_1).
\end{equation}
Combining \eqref{eq:2017-05-29:01} and \eqref{max} and taking account that  $f'$ is strictly decreasing, we get
\begin{equation}\label{aas}
(\rho-b'(x_1))u'(x_1)\leq f'(x_1)< f'(x_0)\leq (\rho-b'(x_0))u'(x_0).  
\end{equation}
Now, by Assumption \ref{ass:bb} we have  $b'(x_0)\geq b'(x_1)$. So, the fact that $u'(x_0)> 0$
 and \eqref{aas} yield
\begin{equation}\label{aasbis}
(\rho-b'(x_1))u'(x_1)<  (\rho-b'(x_1))u'(x_0).  
\end{equation}
By Assumption~\ref{ass:rho}, we have the $\rho-b'(x_1)>0$. Hence,
from  \eqref{aasbis} we obtain $u'(x_1)< u'(x_0)$, contradicting \eqref{primo}.    
\end{proof}
{\red{Recall that a function $\varphi:\mathcal{O}\to\R$, with $\mathcal{O}$ open interval, is said  quasiconcave if
$$
\varphi (\lambda x+(1-\lambda) x')> \min\left\{\varphi(x),\varphi(x')\right\} \ \ \ \ \forall x,x'\in\mathcal{O}, \ \forall \lambda\in(0,1).
$$
Strictly quasiconcave functions can be characterized as functions that are either strictly increasing, or strictly decreasing, or strictly increasing on the left of a point $x^*\in\mathcal{O}$ and strictly decreasing on the right of $x^*$.  
}}
\begin{lemma}\label{2017-05-29:02}
Let Assumption \ref{ass:bb} hold. Let  $a\geq 0$, let $u\in C^2((a,\infty);\mathbb{R})$  satisfy $\mathcal{L}u=f$ on $(a,\infty)$, and  assume
  that $\displaystyle \liminf_{x\rightarrow \infty}u'(x)\leq 0$.
Then $u'$ is strictly quasiconcave.
\end{lemma}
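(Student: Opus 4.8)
The plan is to argue by contradiction and reduce everything to Lemma~\ref{2017-05-22:01}, which forbids a local maximum of $u'$ strictly to the right of any local minimum of $u'$. First I would unwind the definition: $u'$ fails to be strictly quasiconcave on $(a,\infty)$ precisely when there exist $x<z<y$ in $(a,\infty)$ with $u'(z)\le\min\{u'(x),u'(y)\}$. So I assume such a triple exists and set out to exhibit a local minimum of $u'$ followed, further to the right, by a local maximum.

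Next I would localize a minimum. Put $m\coloneqq\min_{[x,y]}u'$. The minimum of $u'$ on $[x,y]$ cannot be attained only at the endpoints, for otherwise $m=\min\{u'(x),u'(y)\}\ge u'(z)\ge m$, which forces $u'(z)=m$ at the interior point $z$; hence there is $x_0\in(x,y)$ with $u'(x_0)=m$, a local minimum of $u'$ on $(a,\infty)$. Lemma~\ref{2017-05-22:01} then yields $m=u'(x_0)>0$ and that $u'$ has no local maximum in $(x_0,\infty)$. At this stage the hypothesis $\liminf_{x\to\infty}u'(x)\le0$ becomes decisive: since $\liminf_{x\to\infty}u'(x)\le0<m$ I can choose $y'>x_0$ as large as I please with $u'(y')<m$. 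Moreover, letting $(x_0-\delta,x_0+\delta)$ be a neighbourhood on which $u'\ge m$, if $u'$ were identically equal to $m$ on $(x_0,x_0+\delta)$ then every interior point of that subinterval would be a local maximum of $u'$, contradicting Lemma~\ref{2017-05-22:01}; so there is $x'\in(x_0,x_0+\delta)$ with $u'(x')>m$, and I take $y'>x_0+\delta$.

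Finally I would conclude. On the compact interval $[x_0,y']$ the continuous function $u'$ has $u'(x_0)=m$, $u'(x')>m$ and $u'(y')<m$, so $\max_{[x_0,y']}u'$ strictly exceeds the values of $u'$ at both endpoints and is therefore attained at some $x_1\in(x_0,y')$; this $x_1$ is a local maximum of $u'$ lying in $(x_0,\infty)$, contradicting Lemma~\ref{2017-05-22:01}. The one point demanding care — the main obstacle — is the handling of degenerate, non-strict extrema (plateaux of $u'$ near $x_0$, or a triple $x<z<y$ sitting inside a plateau): I would note that interior points of a plateau are weak local maxima and that the proof of Lemma~\ref{2017-05-22:01} uses only $u''=0$ and $u'''\le0$ there, so the lemma still applies and kills these cases as well.
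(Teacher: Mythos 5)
Your proof is correct and follows essentially the same route as the paper's: both reduce the claim to the non-existence of an interior local minimum of $u'$ and then derive a contradiction with Lemma~\ref{2017-05-22:01} by producing a local maximum to its right, using $u'(x_0)>0$ together with $\liminf_{x\rightarrow\infty}u'(x)\leq 0$. The only difference is that the paper outsources the first reduction to \cite[Proposition~3.24]{Avriel} and asserts the existence of the local maximum without comment, whereas you prove both elementary steps directly and explicitly dispose of the degenerate plateau case.
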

\begin{proof}
By virtue of \cite[Proposition~3.24]{Avriel},
it is sufficient to show that $u'$ does not admit any local minimum.
Argue by contradiction and assume that
 $x_0\in (a,\infty)$ is a local minimum point for $u'$. The proof of 
 Lemma~\ref{2017-05-22:01} shows then that $u'(x_0)>0$. Hence, since $\displaystyle \liminf_{x\rightarrow \infty}u'(x)\leq 0$, there must exists a local maximum point $x_1\in (x_0,\infty)$. This  contradicts  Lemma~\ref{2017-05-22:01} and we conclude.
\end{proof}

\begin{Proposition}\label{structv'} Let  Assumptions \ref{ass:bb} and  \ref{Assumption:c0c1} hold.
{${}$}
\begin{enumerate}[(i)]
  \item\label{2017-05-30:01} There exists a unique $S\in \mathcal{C}=(s,\infty)$ such that $v'(S)=c_0$.
  \item \label{2017-05-30:02} There exists (a unique) $x^*\in (s,S)$ such that $v'$ is strictly increasing in $(s,x^*]$ and strictly decreasing in $[x^*,\infty)$.
  \item \label{2017-05-30:03}
$      \displaystyle\lim_{x\rightarrow \infty} v'(x)= 0$.
  \end{enumerate}
\end{Proposition}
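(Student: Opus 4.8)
The proposition packages the consequences of the unimodality of $v'$ on $\mathcal{C}=(s,\infty)$; the plan is to produce, from the preceding results, the values of $v'$ that we will compare, invoke Lemma~\ref{2017-05-29:02}, and then read off \emph{(\ref{2017-05-30:01})}--\emph{(\ref{2017-05-30:03})}. First I would record the ingredients. By Theorem~\ref{th:viscosoluQVI} and \eqref{vbis}, $v|_{(s,\infty)}\in C^2((s,\infty);\R)$ and $\mathcal{L}v=f$ on $(s,\infty)$. By the $C^1$-regularity and Corollary~\ref{Lemma1}\emph{(\ref{derv})} (recall $s\in\mathcal{A}=(0,s]$ by Proposition~\ref{2017-04-07:00}), $\lim_{x\to s^+}v'(x)=v'(s)=c_0$. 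Fixing any $x_0\in\mathcal{A}$ and $\zeta\in\Xi(x_0)$, which is nonempty by Proposition~\ref{propempty}\emph{(\ref{2016-11-06:06})}, Proposition~\ref{propempty}\emph{(\ref{2016-11-06:07})} gives $S_0\coloneqq x_0+\zeta\in\mathcal{C}$ and Corollary~\ref{Lemma1}\emph{(\ref{ffa})} gives $v'(S_0)=c_0$; this is already the existence part of \emph{(\ref{2017-05-30:01})}.

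Next I would verify the hypothesis of Lemma~\ref{2017-05-29:02}. Since $v\ge\hat v\ge 0$ and $\limsup_{x\to\infty}v(x)/x=0$ by Proposition~\ref{prop:preB}, one has $\lim_{x\to\infty}v(x)/x=0$; if $\liminf_{x\to\infty}v'(x)=2\ell>0$, then $v'\ge\ell$ on some $[x_1,\infty)$, hence $v(x)\ge v(x_1)+\ell(x-x_1)$ and $\liminf_{x\to\infty}v(x)/x\ge\ell>0$, a contradiction; so $\liminf_{x\to\infty}v'(x)\le 0$. Applying Lemma~\ref{2017-05-29:02} with $a=s$ and $u=v|_{(s,\infty)}$ (Assumption~\ref{ass:bb} holds) gives that $v'$ is strictly quasiconcave on $(s,\infty)$, hence unimodal: there is $x^*\in[s,\infty]$ (endpoints allowed) such that $v'$ is strictly increasing on the part of $(s,\infty)$ to the left of $x^*$ and strictly decreasing on the part to the right.

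Then I would locate $x^*$ and conclude. The case $x^*=s$ is impossible: it would make $v'$ strictly decreasing on $(s,\infty)$ with $\lim_{x\to s^+}v'(x)=c_0$, forcing $v'<c_0$ on $(s,\infty)$ and contradicting $v'(S_0)=c_0$. The case $x^*=\infty$ is impossible too: it would make $v'>c_0$ on $(s,\infty)$, again contradicting $v'(S_0)=c_0$. Hence $x^*\in(s,\infty)$ is the unique maximizer of $v'$ on $(s,\infty)$, with $v'$ strictly increasing on $(s,x^*]$ and strictly decreasing on $[x^*,\infty)$, which gives \emph{(\ref{2017-05-30:02})} once $x^*<S$ is established. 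Since $v'$ is strictly increasing on $(s,x^*]$ with infimum $\lim_{x\to s^+}v'(x)=c_0$, we have $v'>c_0$ on $(s,x^*]$; on $[x^*,\infty)$, $v'$ is strictly decreasing from $v'(x^*)>c_0$, and $\lim_{x\to\infty}v'(x)$ exists by monotonicity and equals $\liminf_{x\to\infty}v'(x)\le 0<c_0$, so the intermediate value theorem gives a unique $S\in(x^*,\infty)$ with $v'(S)=c_0$; this is the only zero of $v'-c_0$ in $\mathcal{C}$, so $S_0=S$, $x^*\in(s,S)$, and \emph{(\ref{2017-05-30:01})} is proved. Finally $\lim_{x\to\infty}v'(x)$ exists, is $\le 0$ by the above, and is $\ge 0$ since $v$ is nondecreasing (Proposition~\ref{prop:mono}); so it is $0$, which is \emph{(\ref{2017-05-30:03})}.

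The delicate point is the localization of $x^*$: the bare unimodality of $v'$ says nothing about whether its peak lies inside $(s,\infty)$, and to place it there one must play the boundary value $v'(s^+)=c_0$, the interior value $v'(S_0)=c_0$ and $\liminf_{x\to\infty}v'(x)\le 0$ off against one another. This is precisely where Assumption~\ref{Assumption:c0c1} (through $\mathcal{A}=(0,s]$) and Assumption~\ref{ass:bb} (through Lemma~\ref{2017-05-29:02}) are indispensable.
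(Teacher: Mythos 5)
Your proof is correct and follows essentially the same route as the paper: existence of $S$ from Proposition~\ref{propempty} and Corollary~\ref{Lemma1}, uniqueness and the shape of $v'$ from the strict quasiconcavity of Lemma~\ref{2017-05-29:02} (whose hypothesis $\liminf_{x\to\infty}v'(x)\le 0$ you verify from Proposition~\ref{prop:preB}, exactly as the paper does via \eqref{estimateB}), and \emph{(iii)} from the eventual monotonicity of $v'$ together with $v'\ge 0$ from Proposition~\ref{prop:mono}. Your write-up is merely more explicit than the paper's about locating the peak $x^*$ in $(s,S)$ and about the intermediate-value step.
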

\begin{proof}
\emph{(\ref{2017-05-30:01})}
Corollary~\ref{Lemma1}\emph{(\ref{ffa})} and Proposition~\ref{propempty}\emph{(\ref{2016-11-06:06})}
yield the existence of  $S\in \mathcal{C}=(s,\infty)$ such that $v'(S)=c_0$.
Regarding uniqueness, observe
first that $v$ satisfies the requirements of Lemma \ref{2017-05-29:02} (plugging 
$v$ in place of $u$) with $a=s$
and where
\begin{equation}
  \label{eq:2017-05-22:00}
  \liminf_{x\rightarrow \infty}v'(x)\leq 0
\end{equation} holds by \eqref{estimateB}. Then the fact that
 $v'(s)=c_0$ by  Corollary~\ref{Lemma1}\emph{(\ref{derv})} yields the uniqueness.


{\red{\noindent\emph{(\ref{2017-05-30:02})}
By \eqref{vbis} we have $v'(s)=c_0$. By \emph{(\ref{2017-05-30:01})} above we have $v'(S)=c_0$ and $v'(x)\neq c_0$ for each $x\in(s,S)$. Then 
the claim follows by 
Lemma~\ref{2017-05-29:02}.}}
\noindent\emph{(\ref{2017-05-30:03})}
This follows immediately
by  monotonicity of $v'$ on $[x^*,+\infty)$,
\eqref{eq:2017-05-22:00}, and Proposition \ref{prop:mono}, which provides $v'\geq 0$.
\end{proof}

\begin{theorem}\label{theoremformvaluefunction}  
Let  Assumptions \ref{ass:bb} and  \ref{Assumption:c0c1} hold. The 
 value function  has the form
  \begin{equation}\label{v}
    v(x)=\begin{dcases} B\varphi(x)+\hat{v}(x),
      & \mbox{if} \ x\in (s,\infty),\\
      B\varphi(S)+\hat{v}(S)-c_0(S-x)-c_1, &\mbox{if} \ x\in (0,s],
    \end{dcases}
  \end{equation}
and the triple $(B,s,S)$ 
is the unique solution
in  \red{$\R_+\times \R_{++}^2$}
to  the system
  \begin{equation}\label{smoothbase}
 \begin{dcases}
 \mbox{\textbf{(i)}} & B\varphi(s)+\hat{v}(s)=B\varphi(S)+\hat{v}(S)-c_0(S-s)-c_1,\\[0.3em]
 \mbox{\textbf{(ii)}}  & B\varphi'(s)+\hat{v}'(s)=c_0,\\[0.3em]
 \mbox{\textbf{(iii)}} & B\varphi'(S) +\hat{v}'(S)=c_0.
 \end{dcases}
 \end{equation}
\end{theorem}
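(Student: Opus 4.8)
The plan is to obtain the form \eqref{v} and the equations \eqref{smoothbase} by assembling the structural results of Sections~\ref{sec:DPE}--\ref{sec:exp}, and then to prove uniqueness of the triple $(B,s,S)$ by reducing \eqref{smoothbase} to a scalar equation with strictly monotone left‑hand side.

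\emph{Form of $v$ and derivation of the system.} By Proposition~\ref{2017-04-07:00}, $\mathcal{A}=(0,s]$ and $\mathcal{C}=(s,\infty)$ for some $s>0$, and by \eqref{vbis} (equivalently Proposition~\ref{pps} together with \eqref{2017-04-04:01bis}) there is $B=B_s\ge 0$ with $v=B\varphi+\hat v$ on $(s,\infty)$. For $x\in(0,s]=\mathcal{A}$, Proposition~\ref{propempty}\emph{(\ref{2016-11-06:06})} yields $\zeta\in\Xi(x)$; then $x+\zeta\in\mathcal{C}$ by Proposition~\ref{propempty}\emph{(\ref{2016-11-06:07})} and $v'(x+\zeta)=c_0$ by Corollary~\ref{Lemma1}\emph{(\ref{ffa})}, so $x+\zeta$ equals the \emph{unique} point $S\in(s,\infty)$ with $v'(S)=c_0$ provided by Proposition~\ref{structv'}\emph{(\ref{2017-05-30:01})}; hence $\zeta=S-x$ and, using $v(S)=B\varphi(S)+\hat v(S)$, one gets $v(x)=v(S)-c_0(S-x)-c_1=B\varphi(S)+\hat v(S)-c_0(S-x)-c_1$ for all $x\in(0,s]$. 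This is the second branch of \eqref{v}, and $S-x>0$ for $x\le s$ gives $S>s$. Equation \textbf{(iii)} of \eqref{smoothbase} is just $v'(S)=c_0$ written through $v=B\varphi+\hat v$; equation \textbf{(ii)} is the smooth fit at $s$, since $v\in C^1(\R_{++})$ by Theorem~\ref{th:viscosoluQVI} forces the right derivative $B\varphi'(s)+\hat v'(s)$ of $v$ at $s$ to equal the left derivative $c_0$ (Corollary~\ref{Lemma1}\emph{(\ref{derv})}); equation \textbf{(i)} is the value matching at $s$, i.e.\ continuity of $v$ at $s$ equates the two branches of \eqref{v} at $x=s$. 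It remains to see $B>0$: if $B=0$ then $v\equiv\hat v$ on $(s,\infty)$, so (ii) and (iii) force $\hat v'(s)=\hat v'(S)=c_0$ with $s<S$; since $\hat v'$ is strictly quasiconcave (Lemma~\ref{2017-05-29:02} applied to $\hat v$, whose derivative satisfies $\liminf_{x\to\infty}\hat v'(x)\le0$ by \eqref{estimateB} and $0\le\hat v\le v$), the level set $\{\hat v'=c_0\}$ reduces to $\{s,S\}$ and $s$ is its lower point, so $\hat v'<c_0$ on $(0,s)$ and $\hat v(s)-c_0s<\hat v(0^+)=0$; but $v$ is affine with slope $c_0$ on $(0,s]$, hence $\lim_{x\to0^+}v(x)=\hat v(s)-c_0s$, contradicting $\lim_{x\to0^+}v(x)\ge\hat v^*(c_0)-c_1>0$ (by \eqref{2017-04-07:04} and Assumption~\ref{Assumption:c0c1}). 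Thus $(B,s,S)\in\R_{++}^3$ solves \eqref{smoothbase}, with $S>s$.

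\emph{Uniqueness of the triple.} The functions $\varphi$ and $\hat v$ being fixed, equation (ii) forces $B=B(s):=(c_0-\hat v'(s))/\varphi'(s)$, so that $v_s:=B(s)\varphi+\hat v$ solves $\mathcal{L}v_s=f$ on $(s,\infty)$ with $v_s'(s)=c_0$; since $\varphi'\to0$ and $\liminf_{x\to\infty}\hat v'(x)\le0$, Lemma~\ref{2017-05-29:02} gives that $v_s'$ is strictly quasiconcave, so on the interval of admissible $s$ (those for which $v_s'$ rises at $s^+$) there is a unique $S=S(s)>s$ with $v_s'(S(s))=c_0$, namely the second crossing of the level $c_0$. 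Equation (i) then reads $\Phi(s):=v_s(s)-\mathcal{M}v_s(s)=0$, with $\mathcal{M}v_s(s)=v_s(S(s))-c_0(S(s)-s)-c_1$. Differentiating, and using that $S(s)$ satisfies the first‑order condition (iii), the $S'(s)$‑terms cancel (an envelope computation) and one obtains the clean identity
\[ \Phi'(s)=B'(s)\bigl(\varphi(s)-\varphi(S(s))\bigr),\qquad B'(s)=-\frac{v_s''(s)}{\varphi'(s)}. \]
Here $\varphi(s)-\varphi(S(s))>0$ (as $\varphi$ is strictly decreasing and $S(s)>s$), $\varphi'(s)<0$, and $v_s''(s)\ge0$ (because $v_s'$ rises at $s^+$), so $B'(s)\ge0$ and $\Phi$ is strictly increasing on the admissible range; since $v$ already furnishes a zero of $\Phi$, it is the only one, and $(B,s,S)$ is the unique solution of \eqref{smoothbase} in $\R_{++}^3$.

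\emph{Main obstacle.} The assembly of \eqref{v} and of the three equations is routine given the earlier results; the substance is the uniqueness step, where the delicate points are: justifying that $S=S(s)$ is a well defined, smooth function on a genuine interval of admissible $s$ (which rests on the strict quasiconcavity of $v_s'$ from Lemma~\ref{2017-05-29:02}), carrying out the envelope differentiation rigorously, and controlling the sign of $v_s''(s)$ on that interval. Alternatively, one may bypass the scalar reduction by showing that any $(\bar B,\bar s,\bar S)\in\R_{++}^3$ solving \eqref{smoothbase} produces, through \eqref{v}, a function in $\lipcz{\R_{++}}$ that is a classical solution of \eqref{QVI} — the nonlocal identity $\bar v=\mathcal{M}\bar v$ on $(0,\bar s)$ following from (i) and (iii), and the differential inequality $\mathcal{L}\bar v\ge f$ there from the strict concavity of $x\mapsto f(x)+c_0b(x)-\rho c_0x$ exactly as in the proof of Lemma~\ref{2017-04-04:00} — and then invoking uniqueness of solutions of \eqref{QVI}; this route makes the concavity hypothesis Assumption~\ref{ass:bb} the crucial input.
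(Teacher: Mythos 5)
Your assembly of the formula \eqref{v} and of the three equations, the inequality $S>s$, and the positivity $B>0$ is correct and essentially parallels the paper's first half; your argument for $B>0$ (via strict quasiconcavity of $\hat v'$, the resulting bound $\hat v'<c_0$ on $(0,s)$, and the contradiction with $\lim_{x\to0^+}v(x)\geq \hat v^*(c_0)-c_1>0$) is a valid variant of the paper's mean--slope argument. The problem is the uniqueness step, which is where the real content of the theorem lies, and there your proof is a programme rather than a proof. You reduce \eqref{smoothbase} to a scalar equation $\Phi(s)=0$ via $B=B(s)$ and $S=S(s)$ and an envelope computation, but: (a) you never establish that the set of ``admissible'' $s$ is an interval, that $S(s)$ is well defined and differentiable (the implicit function theorem would require $v_s''(S(s))\neq 0$, which you do not check), nor that $B(s)$ remains positive along the deformation; (b) from $v_s''(s)\geq 0$ you only get $\Phi'(s)\geq 0$, and concluding that ``$\Phi$ is strictly increasing'' is a non sequitur --- a nondecreasing $\Phi$ can vanish on an interval, so injectivity of the zero is not obtained; (c) you yourself list exactly these points as unresolved in your closing paragraph. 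The alternative route you sketch (show that any solution of \eqref{smoothbase} yields a classical solution of \eqref{QVI} and invoke uniqueness for \eqref{QVI}) is also not available as stated, since the paper never proves a comparison or uniqueness theorem for \eqref{QVI}.

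The paper's uniqueness argument avoids every one of these difficulties and is worth internalizing. Set $h(B,x)=B\varphi(x)+\hat v(x)$. For each fixed $B>0$, Lemma~\ref{2017-05-29:02} makes $h_x(B,\cdot)$ strictly quasiconcave, so $h_x(B,\cdot)=c_0$ has at most two roots with $h_x>c_0$ strictly between them; equation \textbf{(i)} is then rewritten as the integral identity
\begin{equation*}
c_1=\int_s^S\bigl(h_x(B,r)-c_0\bigr)\,dr,
\end{equation*}
which already forces $s<S$. Given two distinct solutions, one must have $B_1\neq B_2$, say $B_1<B_2$; since $\varphi'<0$, $h_x(B_1,\cdot)>h_x(B_2,\cdot)$ pointwise, whence (using \textbf{(ii)}--\textbf{(iii)} and quasiconcavity) $(s_1,S_1)\supset(s_2,S_2)$ and the two integrals representing the same constant $c_1$ are strictly ordered --- a contradiction. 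This is a static comparison of two putative solutions requiring no differentiation in $s$, no implicit function theorem, and no envelope formula; the integral reformulation of the value--matching condition \textbf{(i)} is the ingredient your argument is missing.
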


\begin{proof}
{\red{Consider \eqref{vbis}. The expression of $v$ over $(s,\infty)$ in \eqref{v} and \eqref{vbis} is the same. As for the expression of $v$ over $(0,s]$, 
we note that, by definition of $\Xi(s)$, Proposition~\ref{propempty}, Corollary~\ref{Lemma1}, and 
 Proposition~\ref{structv'}(\ref{2017-05-30:01}),  we have 
\begin{equation}\label{maz}
0<S-s=\mathop{\operatorname{argmax}}_{i>0}\big\{v(s+i)-c_0i-c_1\big\}.
\end{equation}
Since $s\in\mathcal{A}$, we have $v(s)=[\mathcal{M}v](s)$; so, from \eqref{maz} we get 
$$
v(s)=v(S)-c_0(S-s)-c_1,
$$
from which we get the expression of $v$ over $(0,s]$ in \eqref{v}.
 Then the three equations of \eqref{smoothbase} follow, respectively, by imposing the continuity of $v$ at $s$, the smooth-fit at $s$ (as $v\in C^1(\R_{++};\R)$), and the condition of  Proposition~\ref{structv'}(\ref{2017-05-30:01})  defining $S$.  }}
%


To show that 
\eqref{smoothbase} has a unique solution in \red{$\R_+\times \R_{++}^2$}, we consider  the function
$$
h(\hat{B},x)= \hat{B}\varphi(x)+\hat v(x), \qquad (\hat{B},x)\in \R_{++}\times \R_{++}.
$$
For each \red{$\hat{B}\geq 0$}, 
 $\mathcal{L}h(\hat{B},\cdot)=0$ in $\R_{++}$ and $\displaystyle\liminf_{x\rightarrow \infty}\,h_x(\hat{B},x)\leq 0$ by \eqref{estimateB} and 	\eqref{2017-04-05:01}.
By 
Lemma~\ref{2017-05-29:02}
$h_x(\hat{B},\cdot)$ is strictly quasiconcave; hence, 
 there exist at most two solutions $\hat{s},\hat{S}$ 
 to  $h_x(\hat{B},\cdot)=c_0$ in $\R_{++}$. If such solutions exist,  we have
$h(\hat{B},\cdot)-c_0> 0$ on 
$(\hat{s}\wedge \hat{S},\hat{s}\vee \hat{S})$.
Therefore, if
$(\hat{B},\hat{s},\hat{S})\in  \red{\R_+\times \R_{++}^2}$ solves
 \eqref{smoothbase},
then
\eqref{smoothbase}\emph{\textbf{(i)}} yields
\begin{equation*}
  0<c_1=[\hat{B}\varphi(\hat{S})+\hat{ v}(\hat{S})]
  -
  [\hat{B}\varphi(\hat{s})+\hat{ v}(\hat{s})]-c_s(\hat{S}-\hat{s})=\int_{\hat{s}}^{\hat{S}} (h_x(\hat{B},r)-c_0)dr.
\end{equation*}
This forces $\hat{s}=\hat{s}\wedge \hat{S}$, $\hat{S}=\hat{s}\vee \hat{S}$, $\hat{s}\neq \hat{S}$. By the argument above we see that, 
if
 $(B_1,s_1,S_1)$ and $(B_2,s_2,S_2)$ are two different solutions to
\eqref{smoothbase} in  \red{$\R_+\times \R_{++}^2$}, we need to have
$s_1<S_1$, $s_2<S_2$, and $B_1\neq B_2$.

Now assume, by contradiction, that
 $(B_1,s_1,S_1)$ and $(B_2,s_2,S_2)$
are two
different
 solutions
of \eqref{smoothbase} in  \red{$\R_+\times \R_{++}^2$}.
Without loss of generality, we can assume $B_1< B_2$.
Recalling that $\varphi$ is strictly decreasing,
 we have
\begin{equation}\label{finiamo}
 h_x(B_1,\cdot)> h_x(B_2,\cdot).
 \end{equation}
The latter inequality,
Lemma~\ref{2017-05-29:02}, and \eqref{smoothbase}\textbf{\emph{(ii)-(iii)}}
provide
\begin{equation}\label{finiamo1}
 (s_1,S_1)\supset (s_2,S_2), \ \ \ h_x(B_1,\cdot)-c_0> 0 \ \mbox{on} \ (s_1,S_1).
\end{equation}
We can then write, using \eqref{finiamo}-\eqref{finiamo1} and  \eqref{smoothbase}\emph{\textbf{(i)}},
\begin{equation*}
  \begin{split}
0&=c_1-c_1
=
 \left(  h(B_1,S_1)-h(B_1,s_1)- c_0(S_1-s_1) \right) 
-
 \left(  h(B_2,S_2)-h(B_2,s_2)- c_0(S_2-s_2) \right) \\
&=
  \int_{s_1}^{S_1} (h_x(B_1,\xi)-c_0)d\xi
-
  \int_{s_2}^{S_2} (h_x(B_2,\xi)-c_0)d\xi\\
&
\geq  \int_{s_2}^{S_2} (h_x(B_1,\xi)-
h_x(B_2,\xi))d\xi>0,
\end{split}
\end{equation*}
which is a contradiction.
\end{proof}


\section{Optimal control}\label{sec:opt}

In this section, through Theorem \ref{verificationtheorem}, we describe the structure of an optimal control for our problem through a recursive rule.
In the economic literature  --- see the stream of papers on stochastic impulse control at the beginning of the paragraph on the related linterature in the Introduction and \cite{BB} --- this rule is known as $(S,s)$-rule. Informally, this rule, rigorously stated in Theorem \ref{verificationtheorem} below, can be described as follows.
	\begin{itemize}
	\item The point $s$ works as an \emph{optimal  trigger boundary}:\ when the state variable is at level $s$ or below such level (i.e., it is within the action region $\mathcal{A}$), the controller acts.
	\item  The point $S$ works as an \emph{optimal target boundary}:\ when the controller acts, she/he does that in such a way to place the state variable at the level $S\in \mathcal{C}$.
\item When the state variable lies in the region $\mathcal{C}$, the controller let it evolve autonomously without undertaking any action until it exits from this region.
\end{itemize}


\noindent Such rule is made rigorous by the following	 construction.  Let $x\in \R_{++}$ and consider the control
  $I^*=\{(\tau_n,i_n)\}_{n\geq 1}$ defined
  as follows:
  \begin{equation*}
    \begin{dcases}
      \tau_1\coloneqq \begin{dcases} 0
        &   \mbox{if} \ x\leq s,\\
        \inf\left\{t\geq 0\colon {Z}_t^{0,x}\leq s \right\} & \mbox{if} \ x>s,
      \end{dcases}\\[0.4em]
      i_1\coloneqq \begin{dcases} S-x
        &  \qquad\qquad\qquad\quad\mbox{if} \ \tau_1=0 \ (\mbox{i.e.}\ x\leq 0),\\
        S-s & \qquad\qquad\qquad\quad\mbox{if} \ \tau_1>0 \        (\mbox{i.e.}\ x>s),
      \end{dcases}
    \end{dcases}
  \end{equation*}
and then, recursively for $n\geq 1$,
\begin{equation*}
  \begin{dcases}
  \tau_{n+1}\coloneqq
  \begin{dcases}
    \tau_n +
  \inf\left\{t>0\colon Z^{\tau_n,S}_{\tau_n+t}\leq s\right\} & \mbox{if } \tau_n<\infty\\
\infty &\mbox{otherwise}
\end{dcases}\\[0.4em]
i_{n+1}\coloneqq S-s.
\end{dcases}
\end{equation*}
Note that,
for
$\mathbb{P}$-a.e.\
 $\omega\in \{\tau_n<\infty\}$,
 by continuity of $\mathbb{R}_+\rightarrow \mathbb{R},\ t\mapsto Z^{\tau_n,S}_{\tau_n+t}(\omega)$ and since $S>s$, we have  $\tau_{n+1}(\omega)>\tau_n(\omega)$.

\begin{theorem}[Optimal control]\label{verificationtheorem}
Let  Assumptions \ref{ass:bb} and  \ref{Assumption:c0c1} hold.
  Let $x\in \R_{++}$ and consider the control
  $I^*=\{(\tau_n,i_n)\}_{n\geq 1}$ defined
 above. 
Then $I^*\in\mathcal I$ and it is optimal for the problem starting at $x$, i.e.,
$J(x,I^*)=v(x)$.
\end{theorem}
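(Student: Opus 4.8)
The plan is a standard verification argument: show that the candidate control $I^*$ is admissible, that its payoff cannot exceed $v$ (this is automatic from the definition of $v$), and that in fact its payoff equals $v$ by exploiting the explicit form \eqref{v} of the value function and the smooth-fit system \eqref{smoothbase}.

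\emph{Step 1: admissibility.} First I would check that $I^*\in\mathcal I$. Conditions (i) and (ii) in the definition of $\mathcal I$ are built into the construction: the $\tau_n$ are increasing stopping times, with $\tau_{n+1}>\tau_n$ on $\{\tau_n<\infty\}$ by the continuity remark preceding the theorem, and each $i_n$ is $\mathcal F_{\tau_n}$-measurable (indeed deterministic after the first intervention). The two genuinely nontrivial points are \eqref{eq:2015-11-23:00}, i.e. $\tau_n\to\infty$ $\mathbb P$-a.s., and the integrability \eqref{eq:2015-12-18:01}. For the first, between consecutive interventions the process starts at the fixed level $S\in\mathcal C=(s,\infty)$ and runs the autonomous diffusion $Z$ until it hits $s$; since the increments $\tau_{n+1}-\tau_n$ are (on $\{\tau_n<\infty\}$) i.i.d. copies of $\tau_{S,s}$, which is $\mathbb P$-a.s. strictly positive, a Borel–Cantelli / renewal argument gives $\tau_n\to\infty$ a.s. — more precisely, since $0$ and $\infty$ are natural boundaries, $\tau_{S,s}$ is a.s. finite but has a nondegenerate distribution bounded away from $0$ in the sense that $\mathbb P\{\tau_{S,s}\ge\eta\}>0$ for some $\eta>0$, so the partial sums diverge. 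For \eqref{eq:2015-12-18:01}: all intervention sizes are bounded ($i_1\le S$, $i_n=S-s$ for $n\ge2$), so it suffices to bound $\sum_n\mathbb E[e^{-\rho\tau_n}]$, which follows from $\mathbb E[e^{-\rho\tau_{n+1}}\mid\mathcal F_{\tau_n}]=e^{-\rho\tau_n}\mathbb E[e^{-\rho\tau_{S,s}}]$ on $\{\tau_n<\infty\}$ together with $q\coloneqq\mathbb E[e^{-\rho\tau_{S,s}}]<1$ (strict because $\tau_{S,s}>0$ a.s. and $\rho>0$), giving a geometric bound $\mathbb E[e^{-\rho\tau_n}]\le q^{\,n-1}$.

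\emph{Step 2: the value of $I^*$.} Write $\varphi^{\mathrm{sol}}$ for the function defined by the right-hand side of \eqref{v}; by Theorem \ref{theoremformvaluefunction} we have $\varphi^{\mathrm{sol}}=v$, and $v\in C^1(\R_{++})\cap C^2(\mathcal C)$ by Theorem \ref{th:viscosoluQVI}. On $\mathcal C=(s,\infty)$ we have $\mathcal L v=f$ in the classical sense, and on $\mathcal A=(0,s]$ we have $v(x)=v(S)-c_0(S-x)-c_1$, so $v'=c_0$ there; the jump operator satisfies $\mathcal M v(x)=v(x)$ on $\mathcal A$ with $S-x\in\Xi(x)$, i.e. an intervention of size bringing the state to $S$ is optimal at every point of $\mathcal A$. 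Now apply Dynkin's formula to $e^{-\rho t}v(X^{x,I^*}_t)$ on each stochastic interval $[\tau_n,\tau_{n+1})$, where $X^{x,I^*}$ follows $Z$; because $X^{x,I^*}$ stays in $\overline{\mathcal C}$ on these intervals and $v$ is $C^2$ on $\mathcal C$ (one localizes near $s$ using only $C^1$ and the fact that $\mathcal Lv=f$ holds up to the boundary in the appropriate sense), the generator term contributes exactly $\int e^{-\rho t}f(X^{x,I^*}_t)\,dt$. At each intervention time $\tau_n$ the state jumps from $s$ (for $n\ge2$; from $x$ or $s$ for $n=1$) to $S$, and by \eqref{smoothbase}\textbf{(i)} (equivalently, since $v(s)=\mathcal Mv(s)=v(S)-c_0(S-s)-c_1$) the jump in $e^{-\rho t}v$ across $\tau_n$ is exactly $-e^{-\rho\tau_n}(c_0 i_n+c_1)$ — this is the crucial point where the smooth-fit system enters, and where the $C^1$-regularity at $s$ and $S$ guarantees there is no extra local-time or curvature contribution from the boundaries. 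Summing over $n$, using the martingale property of the stochastic-integral part (whose expectation vanishes by the integrability from Step 1 together with the linear-growth bound \eqref{estimateB} on $v$ and standard SDE moment estimates), and letting the terminal time go to infinity — where $\mathbb E[e^{-\rho T}v(X^{x,I^*}_T)]\to0$ by the same estimate used in the proof of Proposition \ref{pps} — telescopes to exactly
\[
v(x)=\mathbb E\!\left[\int_0^\infty e^{-\rho t}f(X^{x,I^*}_t)\,dt-\sum_{n\ge1}e^{-\rho\tau_n}(c_0 i_n+c_1)\right]=J(x,I^*).
\]
Combined with $J(x,I^*)\le v(x)$, which holds by definition of the value function, this gives optimality.

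\emph{Main obstacle.} The delicate step is the rigorous passage through the interventions in Dynkin's formula: one must justify that the only contribution at $\tau_n$ is the clean jump $-e^{-\rho\tau_n}(c_0i_n+c_1)$, which requires (a) that $v$ is smooth enough at the target $S$ and the trigger $s$ — provided by Theorem \ref{th:viscosoluQVI}, $v\in C^1(\R_{++})$ and $C^2$ near $S\in\mathcal C$, and near $s$ one works on $(s-\varepsilon,\infty)\subset$ a region where $v$ is $C^2$ from the right plus $C^1$ matching, or simply applies the formula separately on $[\tau_n,\tau_{n+1})$ so that only the open interval $\mathcal C$ is used — and (b) a uniform-integrability / localization argument to exchange limit, sum and expectation, handled by the geometric decay $\mathbb E[e^{-\rho\tau_n}]\le q^{n-1}$ from Step 1 and the sublinear bound \eqref{estimateB}. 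A secondary technical point is verifying $\tau_n\to\infty$ and \eqref{eq:2015-12-18:01} from the natural-boundary property of $Z$; this is routine given the Feller classification established in the Appendix.
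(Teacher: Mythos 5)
Your proposal is correct and follows essentially the same route as the paper: admissibility via the i.i.d.\ structure of the inter-intervention times yielding the geometric bound $\mathbb{E}[e^{-\rho\tau_n}]\le q^{n-1}$ with $q<1$ (the paper makes this rigorous through an $\varepsilon$-perturbation of the stopping times before passing to the limit), and optimality via It\^o/Dynkin on each interval $[\tau_n\wedge T,\tau_{n+1}\wedge T)$ using $\mathcal{L}v=f$ on $\mathcal{C}$, the jump identity $v(X^*_{\tau_n})-v(X^*_{\tau_n^-})=c_0i_n+c_1$ coming from $v=\mathcal{M}v$ on $\mathcal{A}$, boundedness of $v'$ on $[s,\infty)$, and the vanishing of the terminal term. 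The only cosmetic difference is that the paper settles for the one-sided inequality $J(x,I^*)\ge v(x)$ via Fatou's lemma and closes with $J(x,I^*)\le v(x)$, exactly as you do in your final line.
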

\begin{proof}
\emph{Admissibility.}
As noticed above,
$\tau_n< \tau_{n+1}$ $\mathbb{P}$-a.s.\ on $\{\tau_n<\infty\}$.
Moreover,  for each $n\geq 1$,  $i_n$ is constant; so, as a random variable,  it is trivially $\mathcal{F}_{\tau_n}$-measurable.



Now, for fixed  $\epsilon>0$ such that $S-\epsilon S^2>s$, define the auxiliary sequence $\{\tau^\epsilon_n\}_{n\geq 1}$ of stopping times by
\begin{equation*}
  \tau^\epsilon_1\coloneqq
  \begin{dcases}
    0& \mbox{if $x\leq s$}\\
    \inf
    \left\{ 
      t\geq 0
    \colon Z^{0,x}_t 
    -\epsilon  \left( 
      Z^{0,x}_t 
      +t
       \right) ^2
    \leq s\right\} &\mbox{if $x>s$}
  \end{dcases}
\end{equation*}
and
\begin{equation*}
\tau^\epsilon_{n+1}\coloneqq
 \tau^\epsilon_n+
    \inf
    \left\{ 
      t\geq 0
    \colon 
    Z^{\tau^\epsilon_n,S}_{ \tau^\epsilon_n+t} 
    -\epsilon 
 \left( 
   Z^{\tau^\epsilon_n,S}_{ \tau^\epsilon_n+t} 
   +t
 \right) ^2
    \leq s\right\}\mbox{ for $n\geq 1$}.
\end{equation*}
We notice that
$\tau^\epsilon_n$ is finite and  $\tau^\epsilon_{n+1}> \tau^\epsilon_n$ $\mathbb{P}$-a.s.. 
Moreover,
the random variables
 $\{ \tau^\epsilon_{n+1}- \tau^\epsilon_n\}_{n\geq 1}$
are identically distributed
and
$ \tau^\epsilon_{n+1}- \tau^\epsilon_n$
is independent on $\mathcal{F}_{ \tau^\epsilon_n}$.
Finally, it can be  verified by
 induction that
 \begin{equation*}
   \label{eq:2017-11-15:00}
\lim_{\epsilon\rightarrow 0^+}   \tau^\epsilon_n=\tau_n\qquad \mathbb{P}\mbox{-a.s.\ on\ }\{\tau_n<\infty\},
 \end{equation*}
 from which we obtain 
  \begin{equation}
   \label{eq:2017-11-15:00bis}
\liminf_{\epsilon\rightarrow 0^+}   e^{-\rho \tau^\epsilon_n}\geq e^{-\rho \tau_n}\qquad \mathbb{P}\mbox{-a.s.}.
 \end{equation}
\noindent Define $Y^\epsilon\coloneqq 
    \inf
    \left\{ 
      t\geq 0
    \colon 
    Z^{0,S}_t 
    -\epsilon 
 \left( 
   Z^{0,S}_t
   +t
 \right) ^2
    \leq s\right\}$.
Then $\tau^\epsilon_{n+1}-\tau^\epsilon_n\sim Y^\epsilon$ for all $n\geq 1$.
Observe that $Y^\epsilon$ 
increases as $\epsilon$ tends to $0^+$. Let ${\displaystyle{Y\coloneqq \lim_{\epsilon\rightarrow 0^+}Y^\epsilon}}$.
Since $S-\epsilon S^2>s$ entails $Y^\epsilon>0$, we have in particular $Y>0$.
We can then write, using \eqref{eq:2017-11-15:00bis} and Fatou's Lemma in the first  inequality below,   
\begin{equation}
  \label{eq:2017-11-15:01}
  \begin{split}
      \mathbb{E}
   \left[ 
     e^{-\rho \tau_{n+1}} \right] 
&\leq 
{\liminf_{\epsilon\rightarrow 0^+}}\ \mathbb{E}
 \left[ 
e^{-\rho \tau^\epsilon_{n+1}}
 \right] =
\liminf_{\epsilon\rightarrow 0^+}\ 
  \mathbb{E}
   \left[ 
     e^{-\rho 
       (\tau^\epsilon_{n+1}-\tau^\epsilon_n)}
e^{-\rho \tau^\epsilon_n} \right] 
 \\&=
\liminf_{\epsilon\rightarrow 0^+}
 \ \mathbb{E}
   \left[ 
\mathbb{E}
 \left[ 
\left.
     e^{-\rho 
       (\tau^\epsilon_{n+1}-\tau^\epsilon_n)}
e^{-\rho \tau^\epsilon_n}\right|\mathcal{F}_{\tau^\epsilon_n} \right] 
 \right] 
=
\liminf_{\epsilon\rightarrow 0^+}\
 \left( \mathbb{E}
   \left[ 
     e^{-\rho 
       (\tau^\epsilon_{n+1}-\tau^\epsilon_n)}
 \right] 
\mathbb{E}
 \left[ 
e^{-\rho \tau^\epsilon_n}
 \right] \right)
  \\&
=
\liminf_{\varepsilon\to 0^+}\
 \left( 
\mathbb{E} \left[ e^{-\rho Y^\epsilon} \right]
  \mathbb{E}
   \left[ 
     e^{-\rho 
       \tau^\epsilon_n}
 \right] \right) \ \stackrel{\mbox{\footnotesize{(by induction)}}}{=}\ 
\liminf_{\varepsilon\to 0^+}\  \left(\mathbb{E} \left[ e^{-\rho Y^\epsilon} \right]  \right) ^n
  \mathbb{E}
   \left[ 
     e^{-\rho 
       \tau^\epsilon_1}
 \right]  \\&
\leq
 \left(\mathbb{E} \left[ e^{-\rho Y} \right]  \right) ^n.
\end{split}
\end{equation}
  Summing over $n\geq 1$ and taking into account that $\E[e^{-\rho Y}]<1$,
from \eqref{eq:2017-11-15:01}
 we get
  \begin{equation}\label{pppk}
  \mathbb{E} \left[ \sum_{n\geq 1}e^{-\rho\tau_{n+1}} \right]<\infty. 
  \end{equation}
Both conditions \eqref{eq:2015-11-23:00} and \eqref{eq:2015-12-18:01} follow from \eqref{pppk},
so the control $I^*$ is admissible.
\smallskip

\noindent \emph{Optimality.}
Set $X^*\coloneqq X^{x,I^*}$.
We observe  that,
by 
\eqref{estimateB},
\eqref{eq:2017-09-18:00},
and  \eqref{pppk}, we have
\begin{equation}\label{lim}
\lim_{T\rightarrow\infty} \E\left[e^{-\rho T}v(X^*_{T})\right]=0.
\end{equation}
 Let $T>0$ and set $\tau_0\coloneqq 0^-$. Observe that by definition $X^*\in [s,+\infty)$ and recall that
 $\mathcal{L} v=f$ on $\mathcal{C}=(s,\infty)$.
For all $n\in\N$ we apply It\^o's formula to $v(X^*)$ in the interval $[\tau_n\wedge T,\tau_{n+1}\wedge T)$. Note that $v'$ is bounded in $[s,\infty)$ by Proposition \ref{structv'},
so 
$$\E\left[\int_{\tau_n\wedge T}^{\tau_{n+1}\wedge T} v'(X^*_t)dW_t\right]=0\qquad \forall n\in\N.$$
Hence, taking the expectation in the It\^o formula and taking into  account that $\mathcal{L}v(X^*)=f(X^*)$, we get 
\begin{equation}\label{Ito2}
  \begin{multlined}[c][.9\displaywidth]
      \E\left[e^{-\rho (\tau_{n+1}\wedge T)}v(X^*_{(\tau_{n+1}\wedge T)^-})\right]
  -
  \E\left[ e^{-\rho (\tau_n\wedge T)}v(
    X^*_{\tau_n\wedge T})\right]
  =
  -\E\left[\int_{\tau_n\wedge T}^{\tau_{n+1}\wedge T}e^{-\rho t}f(X^*_t)dt\right], \ \   \forall n\in\N.
\end{multlined}
\end{equation}
Now fix for the moment $\omega\in\Omega$, $n\geq 1$, and assume that $\tau_{n}(\omega)\leq T$.  By definition of $i_n(\omega)$ and considering that $X^{*}_{\tau_n^-}(\omega)\in \mathcal{A}$ 
we have (cf.\ also 
Corollary~\ref{Lemma1}, Proposition~\ref{propempty}\emph{(\ref{2016-11-06:06})},
and the definition of $S$
in
Proposition~\ref{structv'}\emph{(\ref{2017-05-30:01})})
 $$
i_n(\omega)= \mathop{\operatorname{argmax}}_{i>0}  \left\{v(X^*_{\tau_n^-}(\omega)+i)-c_0 i-c_1\right\}.
$$
Hence, considering that $\mathcal{M} v(X^{*}_{\tau_n^-}(\omega))=v(X^{*}_{\tau_n^-}(\omega))$, we have
\begin{equation}\label{Ito3bis}
\hskip-0.20em
e^{-\rho \tau_{n}(\omega)}v(
  X^*_{\tau_{n}(\omega)}
  ) -
e^{-\rho \tau_{n}}v(
  X^*_{\tau_{n}(\omega)^-}
  ) =e^{-\rho \tau_{n}(\omega)}(c_0i_{n}(\omega)+c_1).
\end{equation}
It follows that, for all $n\geq 1$,
\begin{equation}\label{Ito3}
 \begin{multlined}[c][.9\displaywidth]
    \E\left[e^{-\rho (\tau_{n}\wedge T)}
 \left( 
v(
  X^*_{\tau_{n}\wedge T}
  )
-v(
  X^*_{(\tau_{n}\wedge T)^-}
  )
 \right) \right] =\\
=\E\left[e^{-\rho (\tau_{n}\wedge T)}
 \left( 
v(
  X^*_T
  )
-v(
  X^*_{T^-}
  )\right) \mathbf{1}_{\{\tau_n>T\}} \right]+
\E\left[e^{-\rho \tau_{n}}(c_0i_{n}+c_1)\mathbf{1}_{\{\tau_{n}
\leq T\}}\right].
\end{multlined}
\end{equation}
Using \eqref{Ito2} and \eqref{Ito3}, we can then write, for $N\geq 1$,
\begin{equation*}
  \begin{split}
    \mathbb{E} &\left[ e^{-\rho (\tau_{N+1}\wedge T)} v(X^*_{\tau_{N+1}\wedge T}) \right] 
-
v(x)
=
\sum_{n=0}^N
\mathbb{E}
 \left[ 
e^{-\rho (\tau_{n+1}\wedge T)}
v(X^*_{\tau_{n+1}\wedge T})
-
e^{-\rho (\tau_n\wedge T)}
v(X^*_{\tau_n\wedge T}) \right] \\
=&
\sum_{n=0}^N
\mathbb{E}
 \left[ 
e^{-\rho (\tau_{n+1}\wedge T)}
 \left( 
v(X^*_{\tau_{n+1}\wedge T})
-
v(X^*_{(\tau_{n+1}\wedge T)^-}) \right) \right] \\
&
+ 
\sum_{n=0}^N
\mathbb{E} \left[ e^{-\rho (\tau_{n+1}\wedge T)} v(X^*_{
    (\tau_{n+1}\wedge T )^- }) - e^{-\rho (\tau_n\wedge T)}
  v(X^*_{\tau_n\wedge T}) \right]\\
=&\sum_{{{n=0}}}^N \left( \E\left[e^{-\rho (\tau_{n+1}\wedge T)} \left(
      v( X^*_T ) -v( X^*_{T^-} )\right) \mathbf{1}_{\{\tau_{n+1}>T\}}
  \right]+ \E\left[e^{-\rho
      \tau_{n+1}}(c_0i_{n+1}+c_1)\mathbf{1}_{\{\tau_{n+1}
      \leq T\}}\right] \right) \\
&- \sum_{n=0}^N \mathbb{E}\left[ 
  \int_{\tau_n\wedge
    T}^{\tau_{n+1}\wedge T}e^{-\rho t} f(X^*_t)dt \right].
\end{split}
\end{equation*}
By passing to the limit $N\rightarrow \infty$ and using \eqref{eq:2015-11-23:00}, we obtain
 \begin{align*}
   & \mathbb{E} \left[ e^{-\rho
 T} v(X^*_T) \right] 
-
v(x) 
+
 \mathbb{E}\left[ 
  \int_{0}^T e^{-\rho t} f(X^*_t)dt \right]
\\=&
 \sum_{{{n=0}}}^\infty \left( \E\left[e^{-\rho (\tau_{n+1}\wedge T)} \left(
      v( X^*_T ) -v( X^*_{T^-} )\right) \mathbf{1}_{\{\tau_{n+1}>T\}}
  \right]+ \E\left[e^{-\rho
      \tau_{n+1}}(c_0i_{n+1}+c_1)\mathbf{1}_{\{\tau_{n+1}
      \leq T\}}\right] \right).
\end{align*}
 We take now the ${\displaystyle\liminf_{T\rightarrow \infty}}$, using 
\eqref{lim} on the first addend of the left hand side, monotone convergence on the third addend of the left hand side, and 
 Fatou's lemma on the right hand side. We obtain 
\begin{equation}\label{2017-11-16:01}
-v(x)
+\mathbb{E}
 \left[  
   \int_{0}^\infty e^{-\rho t} f(X^*_t) dt \right] \geq
\sum_{n=0}^\infty 
\mathbb{E}
 \left[ 
   e^{-\rho \tau_{n+1}}
   (c_0i_{n+1}+c_1) \right],
\end{equation}
which shows that $I^*$ is optimal.
\end{proof}

\begin{figure}
\centering
\includegraphics[width=11cm, height=7cm]{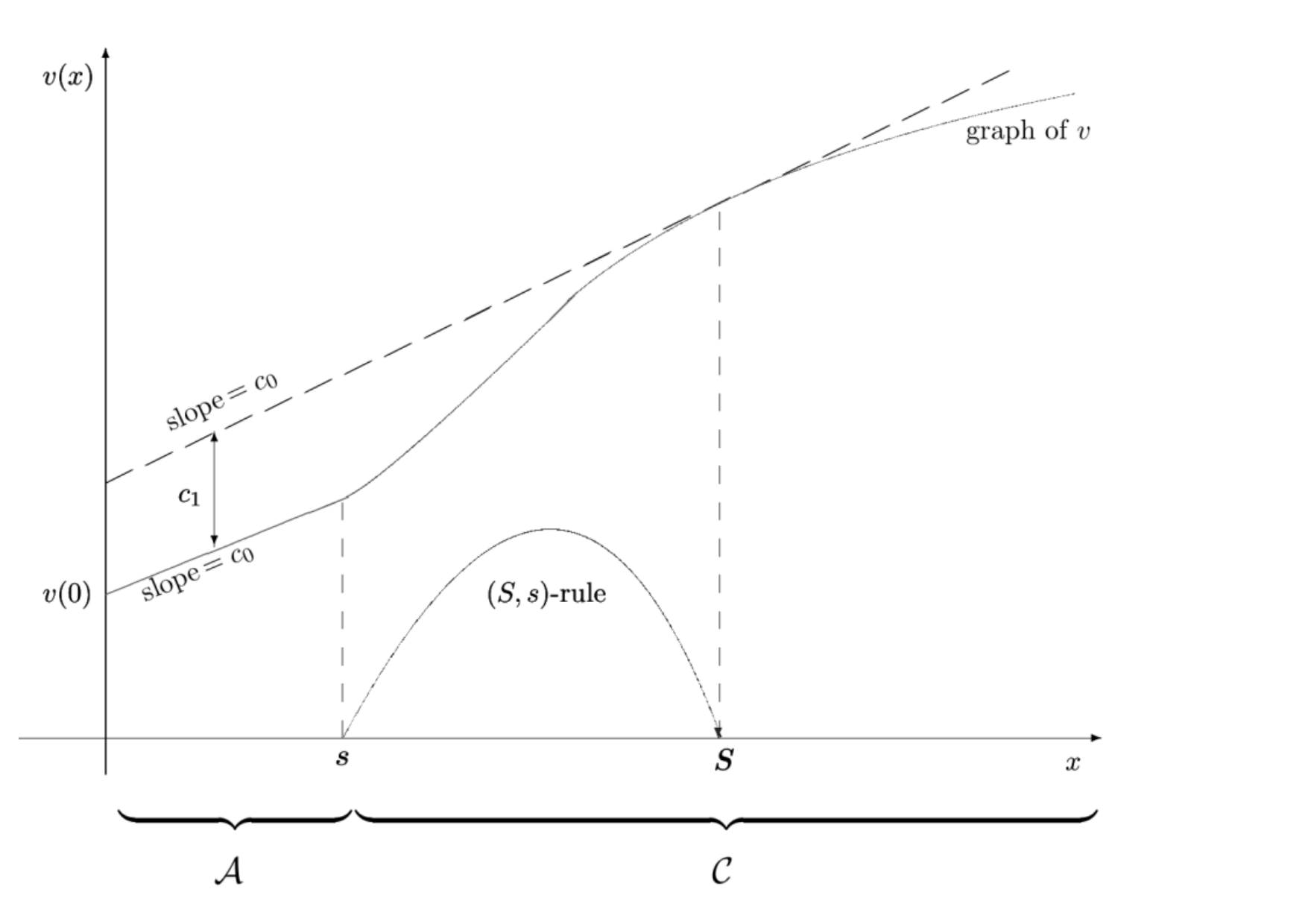}
\caption{An illustrative picture of the value function and of the $S-s$ rule.}\label{fig:1}
\end{figure}
\newpage
\section{Numerical illustration in the linear case}\label{sec:num}

In the  previous sections we have characterized the solution of the dynamic optimization problem through the unique solution of the nonlinear algebraic system \eqref{smoothbase} in the triple $(A,s,S)$.
In this section we specialize the study when the reference process $Z$ follows a 
 geometric Brownian motion dynamics, i.e.\ when
$    b(x)\coloneqq  
      \nu  x$,
$    \sigma(x)\coloneqq  
      \sigma x$,
with $\nu \in\R$,  $\sigma>0$, and when $f(x)=\frac{x^\gamma}{\gamma}$, with $0<\gamma<1$, 
assuming 
\begin{equation}\label{aasro}
\rho>\nu^+.
\end{equation}
In this way, 
Assumptions~\ref{eq:2017-03-23:00},
\ref{ass:bb},
\ref{ass:rho}, \ref{2016-11-06:00}, 
\ref{ass:ass3}(\ref{all})--(\ref{2017-09-25:00}) are satisfied
 (\footnote{Actually, we should consider $b(x)=\nu x$ if $x>0$ and $b(x)=0$ otherwise, and similarly for $\sigma$, in order to fit
Assumption~\ref{eq:2017-03-23:00}. But this does not matter because 
our controlled process lies in $\mathbb{R}_{++}$.}).
In the present case we have 
$$
\varphi (x)= x^m, 
$$
where $m$ is  the negative  root of the characteristic equation 
$$
 \rho-\nu  m-\frac{1}{2}\sigma^2 m(m-1)=0$$
associated with $\mathcal{L}u=0$,
 i.e.\
\begin{equation}\label{eq:m}
m= \left(\frac{1}{2}- \frac{\nu }{\sigma^2}\right) - \sqrt{ \left(\frac{1}{2}- \frac{\nu }{\sigma^2}\right)^2+\frac{2\rho}{\sigma^2}},
\end{equation}
 and 
\begin{equation}
  \label{costgamma}
\hat v(x)= C_\gamma \frac{x^\gamma}{\gamma},  \ \ \ \ C_\gamma\coloneqq  \left(\rho-\nu \gamma+\frac{1}{2}\gamma(1-\gamma)\sigma^2\right)^{-1}.
\end{equation}   
The problem with no fixed cost, i.e.\ when $c_1=0$,  is investigated in the singular control setting (the right one to get existence of optimal controls, see Remark \ref{rem:cost}) in \cite[Sec.\,4.5]{P}. In this case, the value function $v$ and the optimal reflection boundary $s$ are characterized in \cite[Th.\,4.5.7]{P} through an algebraic system too. Such system can be solved providing, in our notation, 
\begin{equation}\label{ssing}
s=\left(\frac{c_0(m-1)}{C_\gamma(m-\gamma)}\right)^{\frac{1}{\gamma-1}},  \ \ \ \ 
 B= \frac{C_\gamma(1-\gamma)}{m(m-1)} s^{\gamma-m}.
\end{equation}
 We make Assumption~\ref{Assumption:c0c1}; the latter in the present case reads as
\begin{equation}\label{costgamma2}
  c_1<
C_\gamma^{\frac{1}{1-\gamma}}
c_0^{\frac{\gamma}{1-\gamma}}
 \left( 
   \frac{1}{\gamma}-1
 \right).
\end{equation}
 {\red{
Moreover, Assumption~\ref{ass:ass3}(\ref{2017-09-25:01}) would read as
 $$\rho>\max\left\{4|\nu|+6\sigma^2,\ 2|\nu|+2\sigma^2\right\}= 4|\nu|+6\sigma^2.$$ However,  as we show below, in the linear-homogeneous case under consideration here, we do not need to make
this assumption:
we can exploit  the linear dependence of the controlled process  on the  initial datum and the homogeneity of $f$ to show the result of semiconvexity stated,  for the general case, in 
Proposition~\ref{prop:semiconvex}.
Consequently, the other results of the paper  hold under no further assumption.
Indeed,
 observing that the
terms $\{i_n\}_{n\geq 1}$ 
 enter in the dynamics of $X^{x,I}$ in additive form, we have
 \begin{equation}
   \label{flopro}
X_t^{x,I} - X_t^{y,I} = X_t^{x,\emptyset} - X_t^{y,\emptyset}=(x-y)e^{(\nu -\frac{\sigma^2}{2})t+\sigma W_t}, \ \ \ \forall I\in\mathcal{I}, \ \forall x,y\in \R_{++},
\end{equation}
that we can use to prove  the following result.}}
\begin{Proposition} In the above framework we have, for every $\lambda\in[0,1]$, and every $x,y\geq \varepsilon>0$
\begin{equation*}  
   v(\lambda x+(1-\lambda)y) -\lambda v(x)-(1-\lambda)v(y)  \leq  \lambda(1-\lambda)(1-\gamma)C_\gamma^{-1}\varepsilon^{\gamma-2}(y-x)^2.
\end{equation*}
\end{Proposition}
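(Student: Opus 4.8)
The plan is to rerun the proof of Proposition~\ref{prop:semiconvex}, cashing in the two features special to the present (linear, power) setting: the controlled state depends \emph{exactly} affinely on the initial datum, and $f$ is homogeneous. Fix $\varepsilon>0$, $\lambda\in[0,1]$ and $x,y\in[\varepsilon,\infty)$; by the symmetry of the inequality under $(x,y,\lambda)\mapsto(y,x,1-\lambda)$ we may assume $x\le y$. Set $z_\lambda\coloneqq\lambda x+(1-\lambda)y$ and, for $I\in\mathcal{I}$, $\Sigma^{\lambda,x,y,I}\coloneqq\lambda X^{x,I}+(1-\lambda)X^{y,I}$, exactly as in that proof. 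The first observation is that \eqref{flopro}, applied to the pairs $(z_\lambda,y)$ and $(x,y)$, gives $X^{z_\lambda,I}_t-X^{y,I}_t=(z_\lambda-y)e^{(\nu-\sigma^2/2)t+\sigma W_t}=\lambda\bigl(X^{x,I}_t-X^{y,I}_t\bigr)$, so that $X^{z_\lambda,I}=\Sigma^{\lambda,x,y,I}$ up to indistinguishability. Hence the term $\mathbf{A}$ in the proof of Proposition~\ref{prop:semiconvex} vanishes identically, and one is left with
\begin{multline*}
J(z_\lambda,I)-\lambda J(x,I)-(1-\lambda)J(y,I)\\
=\mathbb{E}\Bigl[\int_0^\infty e^{-\rho t}\bigl(f(\Sigma^{\lambda,x,y,I}_t)-\lambda f(X^{x,I}_t)-(1-\lambda)f(X^{y,I}_t)\bigr)\,dt\Bigr]=:\mathbf{B}.
\end{multline*}

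The next step is a pointwise bound, in $(t,\omega)$, of the integrand of $\mathbf{B}$. Write $a\coloneqq X^{x,I}_t\le b\coloneqq X^{y,I}_t$, the ordering being \eqref{ineq}, and note $\Sigma^{\lambda,x,y,I}_t=\lambda a+(1-\lambda)b$. For $f(u)=u^\gamma/\gamma$, using only that $f'$ is positive and strictly decreasing one gets $f(\lambda a+(1-\lambda)b)-\lambda f(a)-(1-\lambda)f(b)\le\lambda(1-\lambda)(b-a)\bigl(f'(a)-f'(b)\bigr)$, and then $f'(a)-f'(b)=a^{\gamma-1}-b^{\gamma-1}\le(1-\gamma)a^{\gamma-2}(b-a)$ by the mean value theorem (since $u\mapsto u^{\gamma-2}$ is decreasing); altogether
\[
f(\lambda a+(1-\lambda)b)-\lambda f(a)-(1-\lambda)f(b)\;\le\;\lambda(1-\lambda)(1-\gamma)\,a^{\gamma-2}(b-a)^2 .
\]
Now I insert the two explicit identities of the linear model: since the impulses are positive, $a=X^{x,I}_t\ge X^{x,\emptyset}_t=x\,e^{(\nu-\sigma^2/2)t+\sigma W_t}\ge\varepsilon\,e^{(\nu-\sigma^2/2)t+\sigma W_t}$ (cf.\ \eqref{monotone}), while $b-a=X^{y,I}_t-X^{x,I}_t=(y-x)\,e^{(\nu-\sigma^2/2)t+\sigma W_t}$ by \eqref{flopro}. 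Since the exponents $(\gamma-2)+2=\gamma$ add up, the integrand is dominated by $\lambda(1-\lambda)(1-\gamma)\,\varepsilon^{\gamma-2}(y-x)^2\,e^{\gamma((\nu-\sigma^2/2)t+\sigma W_t)}$, a quantity no longer depending on $I$.

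Taking expectations reduces the whole estimate to the single geometric Brownian motion moment $\mathbb{E}\bigl[\int_0^\infty e^{-\rho t}e^{\gamma((\nu-\sigma^2/2)t+\sigma W_t)}\,dt\bigr]$, which by the usual Gaussian computation equals $\int_0^\infty e^{-(\rho-\nu\gamma+\frac12\gamma(1-\gamma)\sigma^2)t}\,dt=C_\gamma$ (equivalently it is $\gamma\,\hat v(1)$, cf.~\eqref{costgamma}), the exponent being positive by \eqref{aasro}. Hence $\mathbf{B}\le\lambda(1-\lambda)(1-\gamma)C_\gamma\,\varepsilon^{\gamma-2}(y-x)^2$ uniformly over $I\in\mathcal{I}$, and one concludes exactly as in Proposition~\ref{prop:semiconvex}: given $\delta>0$ pick $I$ with $v(z_\lambda)\le J(z_\lambda,I)+\delta$, so that $v(z_\lambda)-\delta-\lambda v(x)-(1-\lambda)v(y)\le J(z_\lambda,I)-\lambda J(x,I)-(1-\lambda)J(y,I)=\mathbf{B}$ is bounded by the above constant; letting $\delta\to0$ gives the assertion.

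I do not expect a genuine obstacle here: the whole content is that linearity of $b,\sigma$ forces the ``$\mathbf{A}$'' term — the very term whose control in the general proof requires Assumption~\ref{ass:ass3}\eqref{all}, the $L^2$-bound on $f'(X^{\beta,\emptyset})$, and Lemma~\ref{2017-09-27:01} — to be identically zero, while homogeneity of $f$ collapses the ``$\mathbf{B}$'' term into one explicit moment of a geometric Brownian motion. The only points needing a little care are the identity $X^{z_\lambda,I}=\Sigma^{\lambda,x,y,I}$ (immediate from \eqref{flopro}) and the uniform lower bound $X^{x,I}_t\ge\varepsilon\,e^{(\nu-\sigma^2/2)t+\sigma W_t}$, which uses positivity of the impulses together with the comparison \eqref{monotone}.
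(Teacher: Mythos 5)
Your proof is correct and follows essentially the same route as the paper's: the pointwise semiconvexity bound $f(\lambda a+(1-\lambda)b)-\lambda f(a)-(1-\lambda)f(b)\leq \lambda(1-\lambda)(1-\gamma)a^{\gamma-2}(b-a)^2$, the exact affine dependence on the initial datum from \eqref{flopro} (which the paper uses implicitly in the same way, to replace $f(X^{z,I})$ by $f(\lambda X^{x,I}+(1-\lambda)X^{y,I})$), the comparison $X^{x,I}\geq X^{x,\emptyset}$, the geometric Brownian motion moment, and the $\delta$-optimal control at $z_\lambda$. One remark: your final constant is $C_\gamma$ rather than the $C_\gamma^{-1}$ appearing in the statement; your value is the correct one, since by \eqref{costgamma} the integral $\mathbb{E}\bigl[\int_0^\infty e^{-\rho t}e^{\gamma((\nu-\sigma^2/2)t+\sigma W_t)}dt\bigr]$ equals $C_\gamma=(\rho-\nu\gamma+\tfrac12\gamma(1-\gamma)\sigma^2)^{-1}$, so the exponent $-1$ in the stated bound (and in the last line of the paper's own computation) is a typo.
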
  
\begin{proof}
Let $0< \xi\leq \xi'$. Then,  for suitable $\eta,\eta'\in[\xi,\xi']$ we have, by Lagrange's Theorem,
\begin{equation}\label{semconv}
\begin{split}
f(\lambda\xi+(1-\lambda)\xi')- \lambda f(\xi)-(1&-\lambda)f(\xi')=\\
=& -\lambda [f(\xi)-f(\xi+(1-\lambda)(\xi'-\xi))]-(1-\lambda)[f(\xi')-f(\xi'+\lambda(\xi-\xi'))]\\
=&\lambda(1-\lambda)  f'(\eta)(\xi'-\xi)-\lambda(1-\lambda)f'(\eta')(\xi'-\xi)\\
=&\lambda(1-\lambda)  \left( f'(\eta)-f'(\eta') \right) (\xi'-\xi)\\
\leq &\lambda(1-\lambda) |f''(\xi)|(\xi'-\xi)^2\\
= &\lambda(1-\lambda) (1-\gamma)\xi^{\gamma-2} (\xi'-\xi)^2.
\end{split}
\end{equation}
  Let now $0<\varepsilon\leq  x\leq y$, $\lambda\in[0,1]$, and set $z\coloneqq \lambda x+(1-\lambda)y$. Let $\delta>0$ and let $I_\delta\in\cali$ be a $\delta$-optimal control for $v(z)$. Then, using \eqref{semconv}, the fact that $X^{x,I}\geq X^{x,\emptyset}$, 
 and recalling \eqref{flopro}, we get 
\begin{equation*}
  \begin{split}
   v(\lambda x+(1-\lambda)y)-\delta -\lambda v(x)-(1-\lambda)&v(y) \leq J(z,I_\delta)-  \lambda J(x,I_\delta)-(1-\lambda)J(y,I_\delta)\\
  = &\E\left[\int_0^{+\infty} e^{-\rho t} \left(f(X_t^{z,I_\delta})-	\lambda f(X_t^{x,I_\delta})-(1-\lambda) f(X_t^{y,I_\delta})\right)dt\right]\\
  \leq &\lambda(1-\lambda)(1-\gamma)\E\left[\int_0^{+\infty} e^{-\rho t} (X^{x,I_\delta}_t)^{\gamma-2}(X^{y,I_\delta}_t-X^{x,I_\delta}_t)^2 dt\right]\\
    \leq &\lambda(1-\lambda)(1-\gamma)\E\left[\int_0^{+\infty} e^{-\rho t} (X^{x,\emptyset}_t)^{\gamma-2}(X^{y,\emptyset}_t-X^{x,\emptyset}_t)^2 dt\right]\\
    =& \lambda(1-\lambda)(1-\gamma)C_\gamma^{-1}x^{\gamma-2}(y-x)^2 \leq  \lambda(1-\lambda)(1-\gamma)C_\gamma^{-1}\varepsilon^{\gamma-2}(y-x)^2,
\end{split}
\end{equation*}
the claim.
\end{proof}

\subsection{Numerical illustration}
We perform a numerical analysis of the solution solving  the nonlinear system \eqref{smoothbase}.
 In Figure~\ref{dervalfun}, we provide the picture of the value function and its derivative when the parameters are set as follows:
{{$\rho = 0.08, \
\nu  = - 0.07, \ \sigma = 0.25, \ 
c_0 = 1, \ 
c_1 = 10, \ \gamma = 0.5.$}} 
Solving \eqref{smoothbase} with
 these entries and with $\varphi(x)=x^m$, where $m$ is given by \eqref{eq:m}, 
 yields 
$$(B,s,S)= (97.0479,  \ 8.7492, \ 56.9930).$$ 
\vspace{-1.2cm}
\begin{figure}[htbp]
    \centering
    \includegraphics[height=7cm,width=11cm]
{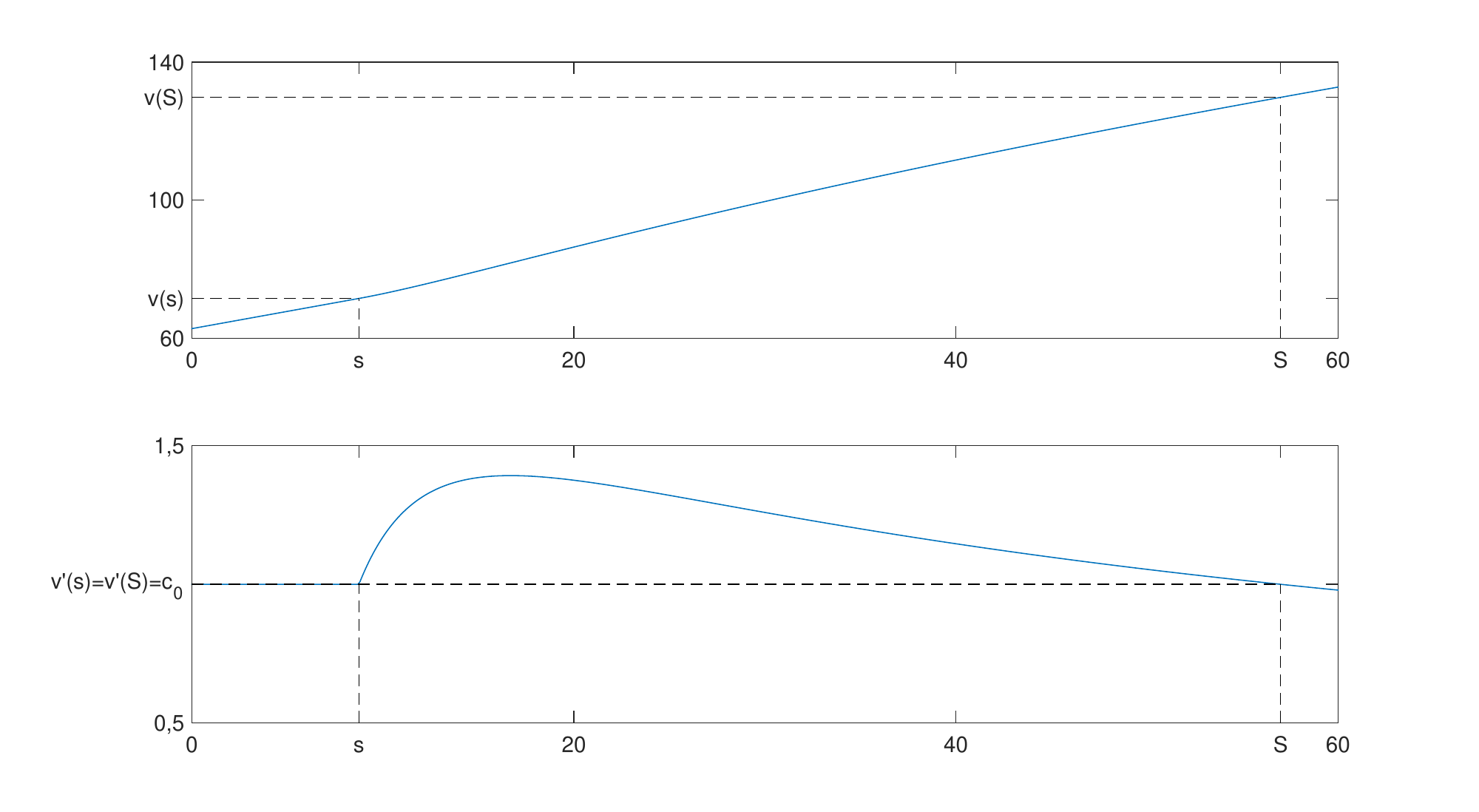}
    \caption{Value function (above) and its derivative (below)}\label{dervalfun}
\end{figure}

In the rest of this section we discuss numerically the solution, illustrating how changes in parameters affect the value function and the trigger and target  boundaries $s,S$, which describe the optimal control\,(\footnote{  
The simulations are done for negative values of $\nu$, thinking of it as a depreciation factor. We omit, for the sake of brevity, to report the simulations that we have performed for positive values of  $\nu$, as the outputs show the same qualitative behaviour as in the case of negative $\nu$.}).


\subsubsection{Impact of volatility}


In Table \ref{Conregion} we report the relevant values the solution for different values of the volatility $\sigma$. 
{{The other parameters are set as follows: $\rho= 0.08, \ \nu  = -0.07,\ \gamma = 0.5,\ c_0 = 1,\ c_1 = 10.$}}
\vspace{-.4cm}
{\small{
\begin{table}[htbp]
\centering \caption{Solution as function of $\sigma$.
}\label{Conregion}
\medskip
\begin{tabular}{l c c c c c c c c}\hline\hline
\multicolumn{1}{c}{$\sigma$} &\textbf{$B$}& \textbf{$s$}
& \textbf{$S$} & \textbf{$S-s$}& \textbf{$v(0)$}& \textbf{$v(s)$}&\textbf{$v(S)$}\\ \hline 
1\% & 349.2820 & 14.6488& 69.1073 &  54.4584 &68.2325& 82.8813 &147.3398\\
5\%& 313.6460 &14.2670 & 68.4774& 54.2104 &68.0298 &   82.2968 &146.5072\\
10\%& 238.6460 &  13.2168&   66.6426& 53.4258 & 67.3856  & 80.6024 &144.0282\\
15\%&172.6459 &  11.8029 &   63.9264&  52.1235 &66.2914&  78.0943&  140.2178\\
20\%& 126.9781& 10.2646& 60.6291 & 50.3644&64.7453&75.0099&135.3743\\
25\%& 97.0479 & 8.7492 &  56.9930&48.2438&62.7645& 71.5137&  129.7575\\
30\%& 77.1043 & 7.3358&  53.2006&45.8648& 60.3826& 67.7184&123.5832
 \\
\hline\end{tabular}
\end{table}
}}\\
%
%
%
%
\noindent Figure \ref{sSdifvola}, {{drawn imposing the same values of parameters}},
 represents the trigger level $s$, the target level $S$, and their difference $S-s$ as functions  of the volatility $\sigma$. The figure and the table show that, when uncertainty increases,
 the action region $\mathcal{A}$ shrinks and
the investment size $S-s$ shrinks.  The first effect is well-known in the economic literature of irreversible investments without fixed costs as \emph{value of waiting to invest}: an increase of uncertainty leads to postpone the investment (see \cite{MS}). We can see that, in our fixed cost context, also the size of the optimal investment is negatively affected by an increase of uncertainty.

\begin{figure}[htbp]
    \centering
    \includegraphics[height=7cm,width=12cm]
{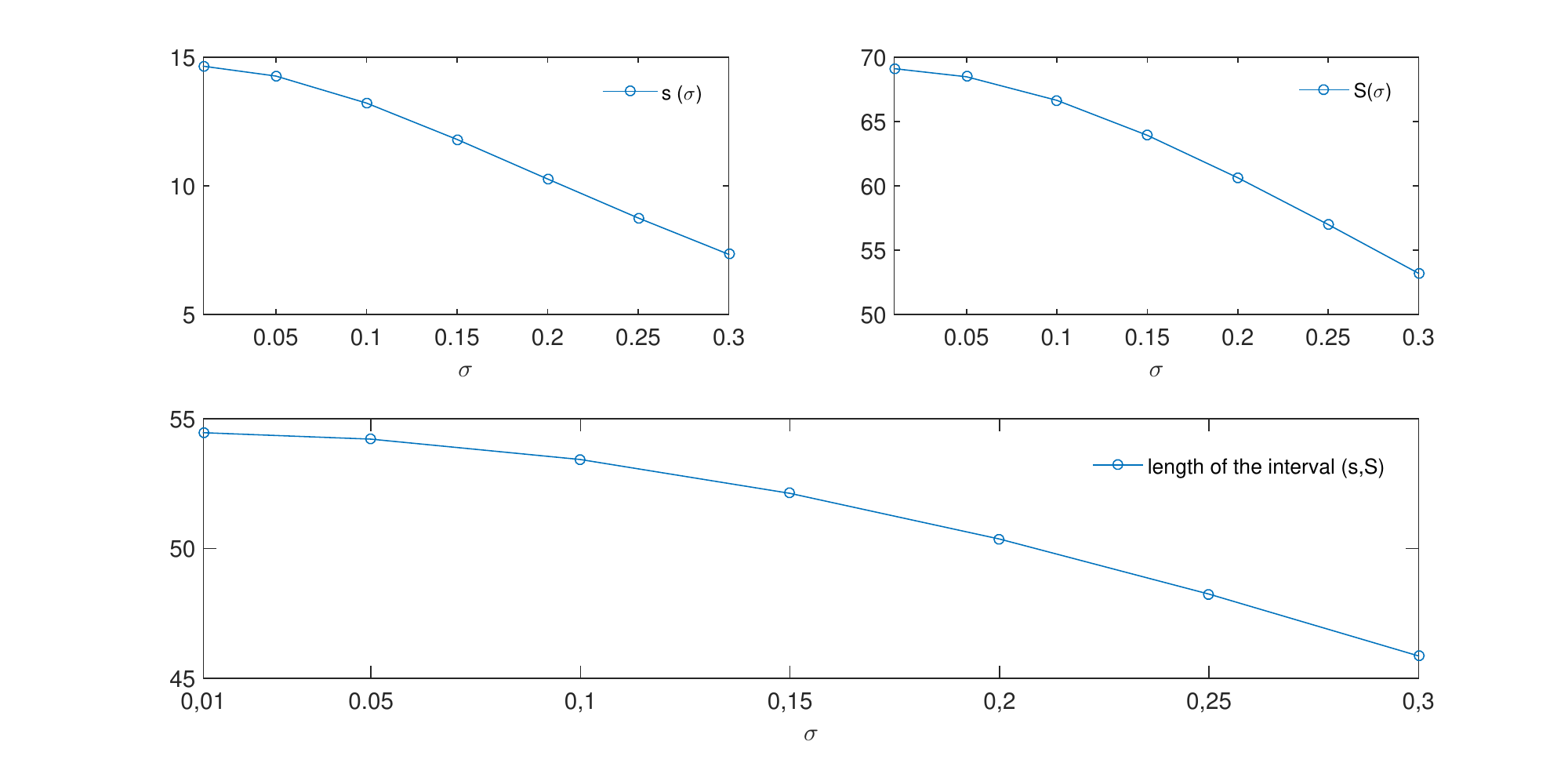}
\vspace{-.2cm}    \caption{The trigger level $s$, the target level  $S$, and the difference $S-s$ as functions of $\sigma$.}\label{sSdifvola}
\end{figure}

\subsubsection{Impact of fixed cost}\label{sec:fixedcost}
In Table \ref{valudifc1} we report the relevant values of the solution for different values of the fixed cost $c_1$, when the other parameters are set as follows: $\sigma=0.1,\ \rho= 0.08,\ \nu  = - 0.07,\ \gamma = 0.5,\ c_0 = 1.$
In the row corresponding to $c_1=0$, there are reported the outputs of the corresponding singular control problem, computed  according to the values of $s$ and $B$ expressed by \eqref{ssing}(\footnote{In this case the  optimal control consists in a reflection policy at a boundary; in other terms the interval $[s,S]$ degenerates in a singleton $\{s\}=\{S\}$.}). It can be observed that the convergence as $c_1\to 0^+$ is pretty slow; this is consistent with the theoretical result of \cite{OUZ}, which would state, in our case, $\frac{\partial v (\cdot;c_1)}{\partial c_1}(0^+)=-\infty$. 
{\small{
\begin{table}[htbp]
\centering 
\caption{Solution as function of $c_1$.}\label{valudifc1}\medskip
\begin{tabular}{l c c c c c c c}\hline\hline
\multicolumn{1}{c}{\textbf{$c_1$}} & \textbf{$B$} & \textbf{$s$}
& \textbf{$S$} & \textbf{$S-s$}& \textbf{$v(0)$}& \textbf{$v(s)$}&\textbf{$v(S)$}\\ \hline 
0   &577.5165& 41.6233&  41.6233   & 0       &83.2470&  124.8703&124.8703\\
0.01&573.1240&38.6466  & 44.5649  & 5.9182  & 83.1362   &  121.7828  &  127.7110 \\
0.5 &519.9311&30.6195  & 52.1522 & 21.5328  &  81.5607   &  112.1802 & 134.2129\\
1   & 487.9211 &27.7903  & 54.7042 & 26.9139 & 80.4620    &  108.2523  &  136.1663 \\
10  & 238.6460 &13.2168  & 66.6426 & 53.4258  &  67.3856       &  80.6024  &  144.0282 \\
30  & 57.6611 & 4.2696  & 72.3953  &  68.1257	 & 44.7847  &  49.0543 & 147.1800 \\
50  & 7.9037  & 1.0275  & 73.7826  &  72.7551  & 24.1040   &  25.1315 & 147.8866\\
\hline\end{tabular}
\end{table}
}}\\
%

\noindent
 Figure \ref{vva}, {{drawn imposing the same values of parameters}}, shows that, as $c_1$ increases,
 the action region $\mathcal{A}$  shrinks
and the investment size $S-s$ expands. Both these effects are expected: the first one is the counterpart of the value of waiting to invest, now with respect to the fixed cost of investment, rather than with respect to  uncertainty; the second one expresses the fact that an increase of the fixed cost leads to invest less often, then to provide a larger investment size when the investment is undertaken.  
\begin{figure}[htbp]
    \centering
    \includegraphics[height=7cm,width=13cm]
{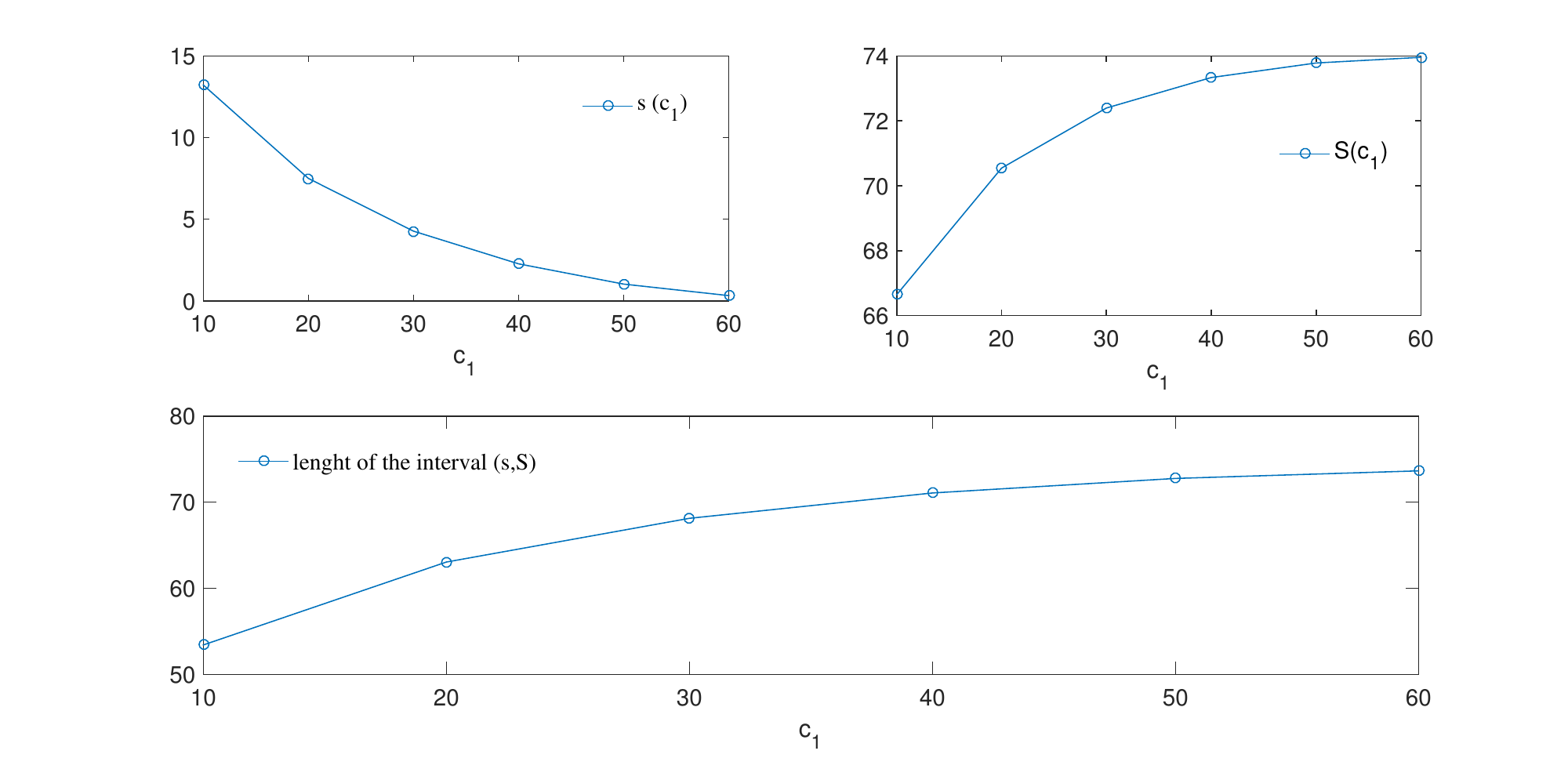}
\caption{The trigger level $s$, the target level  $S$, and the difference $S-s$ as functions of $c_1$.}\label{vva}
\end{figure}

\appendix
\section{Appendix} 

\begin{proposition}
Under 
Assumption~\ref{eq:2017-03-23:00} the boundaries  $0$ and $+\infty$ are \emph{natural} in the sense of Feller's classification for the diffusion  $Z^{0,x}$. 
\end{proposition}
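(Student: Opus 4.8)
The plan is to apply Feller's boundary classification to the reference diffusion $Z:=Z^{0,x}$, which on the interior $\R_{++}$ solves $dZ=b(Z)\,dt+\sigma(Z)\,dW$ with $\sigma>0$. Fix $x_0\in\R_{++}$ and introduce the scale density $p'(y)=\exp\!\big(-\int_{x_0}^{y}\tfrac{2b(z)}{\sigma^2(z)}\,dz\big)$, the scale function $p(y)=\int_{x_0}^{y}p'(\eta)\,d\eta$ and the speed density $m(y)=\big(\sigma^2(y)\,p'(y)\big)^{-1}$ on $\R_{++}$. By Feller's test (see e.g.\ \cite[Ch.~16]{Breiman} or \cite[Sec.~5.5.C]{KS}), the endpoint $0$ is \emph{natural} if and only if both
\[
\int_{0}^{x_0}\big(p(x_0)-p(y)\big)\,m(y)\,dy=+\infty
\qquad\text{and}\qquad
\int_{0}^{x_0}\big(p(y)-p(0^+)\big)\,m(y)\,dy=+\infty
\]
(the second one being $+\infty$ automatically when $p(0^+)=-\infty$), and symmetrically $+\infty$ is natural if and only if the analogous integrals over $(x_0,\infty)$, with $p(x_0)-p(y)$ replaced by $p(y)-p(x_0)$ and $p(y)-p(0^+)$ by $p(\infty)-p(y)$, both diverge. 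So the task reduces to these four divergences.

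The only facts used from Assumption~\ref{eq:2017-03-23:00} are elementary. Since $b,\sigma$ are Lipschitz, vanish on $(-\infty,0]$ and belong to $C^1(\R_+)$, continuity gives $b(0)=\sigma(0)=0$ and hence
\[
|b(y)|\le L_b\,y,\qquad 0<\sigma(y)\le L_\sigma\,y\qquad(y>0);
\]
moreover, since $b',\sigma'$ are continuous at $0$ and Lipschitz on $\R_{++}$,
\[
b(y)=b'(0)\,y+O(y^2),\qquad \sigma(y)=\sigma'(0)\,y+O(y^2)\qquad(y\to0^+).
\]
First I would treat the endpoint $0$. The bound $\sigma^2(y)\le L_\sigma^2 y^2$ yields $m(y)\ge\big(L_\sigma^2 y^2 p'(y)\big)^{-1}$, and, $p$ being increasing, the factor $p(x_0)-p(y)$ is bounded below by a positive constant on $(0,x_0/2]$ whenever $p(0^+)>-\infty$. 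Feeding the expansions of $b,\sigma$ into the definition of $p'$ shows that near $0$ one has $p'(y)\asymp y^{\kappa}$ with $\kappa=-2b'(0)/\sigma'(0)^2$ when $\sigma'(0)\ne0$; a direct computation then makes both integrands behave like $c/y$, so both integrals diverge logarithmically, and the degenerate case $\sigma'(0)=0$ is settled by noting that whenever $p'$ blows up at $0$ one has $p(0^+)=-\infty$ while whenever $p'$ vanishes at $0$ the reciprocal $\sigma^{-2}$ blows up fast enough to keep both integrals infinite. The endpoint $+\infty$ is handled the same way: $\sigma(y)\le L_\sigma y$ again gives $m(y)\ge\big(L_\sigma^2 y^2 p'(y)\big)^{-1}$ for large $y$, $|b(y)|\le L_b y$ controls $(\ln p')'=-2b/\sigma^2$, and in every configuration of the drift one of two things happens: either $p(\infty)=+\infty$, which makes the second integral infinite outright and, by the same $c/y$ asymptotics, the first one as well; or $p(\infty)<\infty$, but then the speed density $m$ necessarily grows fast enough at $\infty$ that both integrals still diverge --- this dichotomy being precisely the scale--speed coupling encoded in $|b|\le L_b y$.

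The routine half is the first divergence at each endpoint, i.e.\ inaccessibility of the boundary, for which the single estimate $\sigma(y)\le L_\sigma y$ is essentially all that is needed; the same computation underlies the Lyapunov argument with $g(y)=-\ln y$, whose image $b(y)g'(y)+\tfrac12\sigma^2(y)g''(y)=-b(y)/y+\sigma^2(y)/(2y^2)$ is bounded above by $L_b+\tfrac12 L_\sigma^2$, so $e^{-Ct}g(Z_t)$ is a supermartingale up to the exit time of a left-neighbourhood of $0$ and therefore $0$ is a.s.\ not reached from $\R_{++}$ in finite time. The genuine obstacle is the second divergence at each endpoint, namely ruling out that $0$ or $\infty$ is an \emph{entrance} boundary: here no one-line bound is available, since \emph{a priori} $p'$ may decay or blow up arbitrarily fast near the endpoint and a crude Cauchy--Schwarz estimate is not sharp, and the resolution is exactly the rigidity $b(y)=b'(0)y+O(y^2)$, $\sigma(y)=\sigma'(0)y+O(y^2)$ near $0$ (and the boundedness of $b',\sigma'$ together with $|b|\le L_b y$ near $\infty$), which ties the growth rate of the scale density to that of the speed density so that the two competing effects cancel at the critical rate. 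Alternatively --- and perhaps more transparently --- non-entrance at $0$ is immediate from the fact that $b(0)=\sigma(0)=0$ forces $Z\equiv0$ to be the unique solution of the SDE started from $0$: comparing any hypothetical entrance diffusion $\tilde Z$ with this solution via the standard Lipschitz $L^2$-estimate gives $\mathbb E[\sup_{s\le T}\tilde Z_{\varepsilon+s}^2]\le C_T\,\mathbb E[\tilde Z_\varepsilon^2]$ for all $\varepsilon>0$, and letting $\varepsilon\to0^+$ (using $\tilde Z_\varepsilon\to0$) forces $\tilde Z\equiv0$, a contradiction.
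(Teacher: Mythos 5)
Your overall strategy coincides with the paper's: both proofs run Feller's boundary classification through the scale function $p$ and the speed density $m$, and reduce the claim to the divergence of the exit and entrance integrals at each endpoint (the paper packages this as $\int y\,m(dy)=\infty$ near each boundary after transforming to natural scale, using only the bounds $|b(\xi)|\le C_0\xi$ and $\sigma^2(\xi)\le C_1\xi^2$ together with Breiman's Prop.~16.43 and 16.45). Your treatment of the origin in the non-degenerate case $\sigma'(0)\neq 0$ is correct, and you rightly observe that the divergences are only logarithmic, so that the precise linear behaviour of $b$ and $\sigma$ at the boundary is what makes the argument work. There are, however, two genuine gaps.

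First, the case $\sigma'(0)=0$ is dispatched with a false dichotomy: it is not true that ``$p'$ blows up at $0$'' forces $p(0^+)=-\infty$ (take $p'(y)\sim y^{-1/2}$, which blows up but is integrable), so in that branch the entrance integral is not automatically infinite; and the other branch (``$\sigma^{-2}$ blows up fast enough'') is an assertion, not an estimate. Second, and more seriously, the endpoint $+\infty$ is not actually proved to be non-entrance. The mechanism you exploit at $0$ --- the two-sided control $b(y)=b'(0)y+O(y^2)$, $\sigma(y)=\sigma'(0)y+O(y^2)$ --- has no counterpart at $\infty$: Assumption~\ref{eq:2017-03-23:00} gives only the upper bounds $|b(y)|\le L_b y$ and $\sigma(y)\le L_\sigma y$, and $\sigma$ may perfectly well be bounded or sublinear at infinity (e.g.\ $\sigma\equiv 1$ for large $y$ is admissible), so the ``$c/y$ asymptotics'' do not transfer. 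When $p(\infty)<\infty$ one has
$\bigl(p(\infty)-p(y)\bigr)m(y)=2\bigl(\int_y^\infty p'(\eta)\,d\eta\bigr)/\bigl(\sigma^2(y)p'(y)\bigr)$,
and the divergence of its integral rests on a Laplace-type lower bound of the form $\int_y^\infty p'(\eta)\,d\eta\gtrsim p'(y)\sigma^2(y)/(2L_b y)$ --- exactly the quantitative content of the ``scale--speed coupling'' you invoke but do not prove; as it stands, the hardest quarter of the statement is missing. (Your alternative argument for non-entrance at $0$, via uniqueness of the trivial solution of the SDE started from $0$, is a sound and more elementary idea, but it gives nothing at $+\infty$.)
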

\begin{proof}
 Clearly $+\infty$ is \emph{not accessible}, in the sense that $Z^{0,x}$ does not explode in finite time.
It remains to show  that $0$ is \emph{not accessible}, that is
\begin{equation}
  \label{eq:2017-05-18:00bis}
  x\in\R_{++}  \ \Longrightarrow  \
Z^{0,x}_{t}>0 \quad \mathbb{P}\mbox{-a.s.}\  \forall  t\geq 0;
\end{equation}
that both $0$ and $+\infty$ are \emph{not entrance}, that is
\begin{equation}\label{natural}
\lim_{x\downarrow 0}\P\{\tau_{x,y}<t\}=0, \ \ \ \lim_{x\uparrow \infty}\P\{\tau_{x,y}<t\}=0, \ \ \ \forall t,y\in \R_{++}.
\end{equation}
 To this end, 
we introduce the speed measure $m$ of the diffusion $Z^{0,x}$ transformed to natural scale (see \cite[Prop.~16.81, Th.~16.83]{Breiman}). 
Up to a multiplicative constant, we have
$$
m(dy)= \frac{2}{\sigma^2(y)} e^{\int_1^y\frac{2b(\xi)}{\sigma^2(\xi)}d\xi} dy, \ \ \ y\in\R_{++}. 
$$
Assumption~\ref{eq:2017-03-23:00} implies that for some $C_0,C_1>0$ we have 
$|b(\xi)|\leq C_0 \xi$ and $\sigma^2(\xi)\leq C_1\xi^2$ for every $\xi\in\R_+$. 
{\red{According to \cite[Prop.~16.43]{Breiman} we compute
$\int_0^1 ym(dy)
$.
We have
$$
\int_0^1 ym(dy)\geq \int_0^1 \frac{2y}{\sigma^2(y)} e^{\int_1^y\frac{-2C_0\xi}{\sigma^2(\xi)}d\xi} dy.
$$
Set $F(y):=\int_1^y\frac{-2C_0\xi}{\sigma^2(\xi)}d\xi$. We have
\begin{eqnarray*}
\int_0^1 \frac{2y}{\sigma^2(y)} e^{\int_1^y\frac{-2C_0\xi}{\sigma^2(\xi)}d\xi} dy&=&
-\frac{1}{C_0} \int_0^1 F'(y) e^{F(y)} dy=  -\frac{1}{C_0}\left[e^{F(1)}-\lim_{y\to 0^+}e^{F(y)}\right]\\
&=&-\frac{1}{C_0}\left[1-\lim_{y\to 0^+}e^{\int_1^y -\frac{2C_0\xi}{\sigma^2(\xi)}d\xi}\right]=  -\frac{1}{C_0}\left[1-e^{\lim_{y\to 0^+}\int_y^1 \frac{2C_0}{C_1\xi}d\xi}\right]=+\infty.
\end{eqnarray*}}}
This shows, by \cite[Prop.~16.43]{Breiman}, that \eqref{eq:2017-05-18:00bis} holds, 
The fact that $0$ is not-entrance, i.e.\ that the first limit in \eqref{natural} holds, is then consequence of \cite[Prop.~16.45(a)]{Breiman}. Let us show, finally, that also $+\infty$ is not-entrance, i.e.\ that the second limit in \eqref{natural} holds. In this case, according to  \cite[Prop.~16.45(b)]{Breiman} we consider
 $\int_1^{+\infty} ym(dy)$ and see, with the same computations as above, that it is equal to $+\infty$. By the aforementioned result we conclude that $+\infty$ is not entrance.
 \end{proof}
 \begin{remark} The property
 \eqref{eq:2017-05-18:00bis} can be generalized to the case of random initial data. 
Let
 $\tau$ be a (possibly infinite) $\mathbb{F}$-stopping time and let $\xi$ be an  $\mathcal{F}_\tau$-measurable random variable, clearly we have the equality in law 
$    Z^{\tau,\xi}_{t+\tau}= \left( Z^{0,x}_t \right) _{|_{x=\xi}}$.
By
\eqref{eq:2017-05-18:00bis}, it then follows that
\begin{equation}
  \label{eq:2017-05-18:00}
 \xi\ \mathcal{F}_\tau\mbox{-measurable random variable, }
\xi>0\ \mbox{$\mathbb{P}$-a.s.}  \ \Longrightarrow  \
Z^{\tau,\xi}_{t+\tau}>0 \  \mathbb{P}\mbox{-a.s.\ on \ }\{\tau<\infty\},\  \forall  t\geq 0.
\end{equation}
\end{remark}

\begin{lemma}\label{2017-09-27:01}
  Let 
 $I\in \mathcal{I}$, $x,y\in \R_{++}$.
\begin{enumerate}[(i)]
\item\label{2017-11-13:00} We have
\begin{equation}
\label{2017-09-27:15}
       \mathbb{E} \left[ |X^{x,I}_s-X^{y,I}_s|^4
     \right] \leq
 |x-y|^4
e^{C_0
t}\qquad \forall t \geq 0,
\end{equation}
where $C_0 \coloneqq  4L_b+6L_\sigma^2.$
\item\label{2017-11-13:01}
For each $\lambda \in[0,1]$ and $x,y\in\R_{++}$, define $z_\lambda \coloneqq \lambda x+(1-\lambda)y$.
  Then
\begin{equation}
    \label{2017-09-27:00}
    \mathbb{E}
     \left[ \left|X^{z_\lambda,I}_t - \lambda X^{x,I}_t
   -
 (1-  \lambda) X^{y,I}_t\right|^2 \right] 
     \leq
     A_0\lambda^2(1-\lambda)^2 |x-y|^{4}      
    e^{B_0t} \ \ \ \forall \lambda\in[0,1], \ \forall t\geq 0,
\end{equation}
where $A_0>0$
and  $B_0 \coloneqq 2L_b+2L_\sigma^2+\tilde{L}_b$.
\end{enumerate}
\end{lemma}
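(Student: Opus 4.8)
The plan is to exploit the fact that the impulse part $\sum_{n\ge 1}\mathbf{1}_{[0,\cdot]}(\tau_n)\,i_n$ of \eqref{2017-09-17:01} does not depend on the initial datum, so that it cancels in every difference of controlled processes we consider. Consequently these differences solve genuine (jump-free) It\^o SDEs whose coefficients enjoy Lipschitz-type bounds, and both estimates then follow from It\^o's formula together with Gr\"onwall's inequality, modulo a routine localization. For part (i): put $D_t:=X^{x,I}_t-X^{y,I}_t$. Subtracting the integral equations \eqref{2017-09-17:01} for $X^{x,I}$ and $X^{y,I}$ (with $\tau=0$), the impulse terms cancel and
\[
D_t=(x-y)+\int_0^t\big(b(X^{x,I}_s)-b(X^{y,I}_s)\big)\,ds+\int_0^t\big(\sigma(X^{x,I}_s)-\sigma(X^{y,I}_s)\big)\,dW_s .
\]
Applying It\^o's formula to $|D_t|^4$ along the localizing stopping times $\rho_N:=\inf\{t\ge 0:|D_t|\ge N\}$, taking expectations so the local martingale term vanishes, and using $|b(X^{x,I}_s)-b(X^{y,I}_s)|\le L_b|D_s|$ and $|\sigma(X^{x,I}_s)-\sigma(X^{y,I}_s)|\le L_\sigma|D_s|$ on the drift and the It\^o-correction term, one gets
\[
\mathbb{E}\big[|D_{t\wedge\rho_N}|^4\big]\le|x-y|^4+(4L_b+6L_\sigma^2)\int_0^t\mathbb{E}\big[|D_{s\wedge\rho_N}|^4\big]\,ds .
\]
Gr\"onwall's lemma yields $\mathbb{E}[|D_{t\wedge\rho_N}|^4]\le|x-y|^4e^{C_0t}$, and Fatou's lemma as $N\to\infty$ (note $\rho_N\to\infty$ since $D$ has continuous paths) gives \eqref{2017-09-27:15}.

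For part (ii): set $\Sigma^\lambda_t:=\lambda X^{x,I}_t+(1-\lambda)X^{y,I}_t$ and $R_t:=X^{z_\lambda,I}_t-\Sigma^\lambda_t$. Again the jump at each $\tau_n$ equals $i_n$ for both $X^{z_\lambda,I}$ and $\Sigma^\lambda$, so it cancels; $R_0=z_\lambda-\lambda x-(1-\lambda)y=0$, and
\[
R_t=\int_0^t\!\big(b(X^{z_\lambda,I}_s)-\lambda b(X^{x,I}_s)-(1-\lambda)b(X^{y,I}_s)\big)ds+\int_0^t\!\big(\sigma(X^{z_\lambda,I}_s)-\lambda\sigma(X^{x,I}_s)-(1-\lambda)\sigma(X^{y,I}_s)\big)dW_s .
\]
For each coefficient, say $b$, split $b(X^{z_\lambda,I}_s)-\lambda b(X^{x,I}_s)-(1-\lambda)b(X^{y,I}_s)=\big[b(X^{z_\lambda,I}_s)-b(\Sigma^\lambda_s)\big]+\big[b(\Sigma^\lambda_s)-\lambda b(X^{x,I}_s)-(1-\lambda)b(X^{y,I}_s)\big]$; the first bracket is bounded by $L_b|R_s|$, while for the second, since $X^{x,I}_s-\Sigma^\lambda_s=(1-\lambda)D_s$ and $X^{y,I}_s-\Sigma^\lambda_s=-\lambda D_s$, a second-order Taylor expansion of $b$ at $\Sigma^\lambda_s$ combined with the $\tilde{L}_b$-Lipschitz continuity of $b'$ on $\mathbb{R}_{++}$ gives $|b(\Sigma^\lambda_s)-\lambda b(X^{x,I}_s)-(1-\lambda)b(X^{y,I}_s)|\le\tilde{L}_b\,\lambda(1-\lambda)|D_s|^2$, and analogously for $\sigma$ with $L_\sigma,\tilde{L}_\sigma$. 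Applying It\^o to $|R_t|^2$ (same localization), using these bounds and Young's inequality $2ab\le a^2+b^2$, one arrives at a linear integral inequality
\[
\psi_t\le\int_0^t\Big(B_0\,\psi_s+\kappa\,\lambda^2(1-\lambda)^2\,\mathbb{E}\big[|D_s|^4\big]\Big)ds ,\qquad \psi_t:=\mathbb{E}\big[|R_{t\wedge\rho_N}|^2\big],
\]
with $\kappa$ depending on $L_b,L_\sigma,\tilde{L}_b,\tilde{L}_\sigma$. Inserting $\mathbb{E}[|D_s|^4]\le|x-y|^4e^{C_0s}$ from part (i), Gr\"onwall in variation-of-constants form and then Fatou yield \eqref{2017-09-27:00}, with $A_0$ absorbing $\kappa$ and the constants produced along the way (and $B_0$ as stated).

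I expect the main difficulty to be bookkeeping rather than anything conceptual: the cancellation of the impulse terms is what makes the difference processes amenable to classical diffusion estimates in the first place, and the only genuinely delicate point is the quadratic-in-$D$ bound on $b(\Sigma^\lambda)-\lambda b(X^{x,I})-(1-\lambda)b(X^{y,I})$ and its $\sigma$-analogue. This is exactly where the hypothesis in Assumption~\ref{eq:2017-03-23:00} that $b',\sigma'$ be Lipschitz is used, and it is what produces the factor $\lambda^2(1-\lambda)^2|x-y|^4$ on the right-hand side of part (ii); everything else is the standard localize / Gr\"onwall / Fatou machinery.
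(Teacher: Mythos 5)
Your proposal is correct and follows essentially the same route as the paper's proof: Itô's formula applied to $|X^{x,I}-X^{y,I}|^4$ and to $|X^{z_\lambda,I}-\Sigma^{\lambda,x,y,I}|^2$ after localization, the same splitting of the drift and diffusion increments into a Lipschitz part plus a convexity-defect part bounded by $\tilde L_b\lambda(1-\lambda)|X^{x,I}-X^{y,I}|^2$ (resp.\ $\tilde L_\sigma$), Young's inequality, insertion of the fourth-moment bound from part (i), and Gr\"onwall. The only cosmetic difference is that the paper cites the computation in \cite[p.\ 188]{YongZhou} for the quadratic bound \eqref{eq:2017-09-11:01B}--\eqref{eq:2017-09-11:02B}, whereas you derive it directly by a second-order Taylor argument; both are valid.
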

\begin{proof}
\emph{(\ref{2017-11-13:00})}
 We apply It\^o's formula to $|X^{x,I}-X^{y,I}|^4$ and then --- after a standar localization procedure with stopping times to let the stochastic integral term be a martingale and all the other expectations be well defined and finite; see e.g. the proof of Proposition \ref{prop:preB} --- we take the expectation. We get, also using Assumption \ref{eq:2017-03-23:00},
\begin{equation*}
\begin{split}
\E\left[|X^{x,I}_t-X^{y,I}_t|^4\right]&= |x-y|^4+4\E\int_0^t (X^{x,I}_u-X^{y,I}_u)^3(b(X^{x,I}_u)-b(X^{y,I}_u))du\\&+ 6\E\int_0^t (X^{x,I}_u-X^{y,I}_u)^2(\sigma(X^{x,I}_u)-\sigma(X^{y,I}_u))^2du\\
&\leq   |x-y|^4+(4L_b+6L_\sigma^2)\int_0^t \E\left[|X^{x,I}_u-X^{y,I}_u|^4\right]du.
	\end{split}
\end{equation*}
  The claim follows by Gronwall's inequality.

\noindent\emph{(\ref{2017-11-13:01})}
Define
$\Sigma^{\lambda,x,y,I}\coloneqq 
\lambda X^{x,I}
   +
 (1-  \lambda) X^{y,I}$.
We apply It\^o's formula to the process 
$(X^{z_\lambda,I}-\Sigma^{\lambda,x,y,I})^2$
and then --- after a standar localization procedure with stopping times to let the stochastic integral term be a martingale and all the other expectations are well defined and finite; see e.g. the proof of Proposition \ref{prop:preB}  ---
take the expectation, obtaining,  also using Assumption \ref{eq:2017-03-23:00}, 
 \begin{equation}\label{2017-09-27:02}
   \begin{split}
     \mathbb{E} \left[ (X^{z_\lambda,I}_t-\Sigma^{\lambda,x,y,I}_t)^2
     \right] =&  2 \int_0^t \mathbb{E} \left[
       (X^{z_\lambda,I}_u-\Sigma^{\lambda,x,y,I}_u) \left(
         b(X^{z_\lambda,I}_u)- \lambda b(X^{x,I}_u) - (1-\lambda)
         b(X^{y,I}_u)
       \right) \right]  du\\
     & +  \int_0^t \mathbb{E} \left[
      \left(
         \sigma(X^{z_\lambda,I}_u)- \lambda \sigma(X^{x,I}_u) -
         (1-\lambda) \sigma(X^{y,I}_u)
       \right)^2 \right]  du\\
   \leq  &2 \int_0^t \mathbb{E} \left[
       |X^{z_\lambda,I}_u-\Sigma^{\lambda,x,y,I}_u|\cdot |         b(X^{z_\lambda,I}_u)- b(\Sigma^{\lambda,x,y,I}_u)
       | \right]  du\\
      &+ 2 \int_0^t \mathbb{E} \left[
       |X^{z_\lambda,I}_u-\Sigma^{\lambda,x,y,I}_u |
        \cdot| b(\Sigma^{\lambda,x,y,I}_u)- \lambda b(X^{t,\xi,I}_u) -
         (1-\lambda) b(X^{t,\xi',I}_u)
       | \right]  du\\
     & +2\int_0^t \mathbb{E} \left[   |    
         \sigma(X^{z_\lambda,I}_u)- \sigma(\Sigma^{\lambda,x,y,I}_u)
       |^2 \right]  du\\
& + 2\int_0^t \mathbb{E} \left[|
               \sigma(\Sigma^{\lambda,x,y,I}_u)- \lambda
         \sigma(X^{x,I}_u) - (1-\lambda) \sigma(X^{y,I}_u)
       |^2 \right]  du\\
      \leq &  2\left(L_b+L_\sigma^2 \right) \int_0^t
     \mathbb{E} \left[ |X^{z_\lambda,I}_u-\Sigma^{\lambda,x,y,I}_u|^2
     \right]  du\\
     &+ 2 \int_0^t \mathbb{E} \left[
       |X^{z_\lambda,I}_u-\Sigma^{\lambda,x,y,I}_u |
        \cdot| b(\Sigma^{\lambda,x,y,I}_u)- \lambda b(X^{t,\xi,I}_u) -
         (1-\lambda) b(X^{t,\xi',I}_u)
       | \right]  du\\
     &+2 \int_0^t \mathbb{E} \left[|
               \sigma(\Sigma^{\lambda,x,y,I}_u)- \lambda
         \sigma(X^{x,I}_u) - (1-\lambda) \sigma(X^{y,I}_u)
       |^2 \right]  du.
   \end{split}
 \end{equation} 
By doing the same computations as in 
\cite[p.\ 188]{YongZhou}
in order to obtain
\cite[p.\ 188, formulae (4.22) and (4.23)]{YongZhou},
we have
\begin{eqnarray}
 |     b(\lambda x'+(1-\lambda)x'')
    -\lambda     b(x')
    -(1-\lambda)    b(x'')|\leq 
\tilde{ L}_b\lambda(1-\lambda)|x'-x''|^2 
\qquad  \forall x',x''\in \mathbb{R}_{++},
 \label{eq:2017-09-11:01B}
\\[1em]
 |     \sigma(\lambda x'+(1-\lambda)x'')
    -\lambda     \sigma(x')
    -(1-\lambda)    \sigma(x'')|\leq 
\tilde{ L}_\sigma\lambda(1-\lambda)|x'-x''|^2
\qquad \forall x',x''\in \mathbb{R}_{++},
  \label{eq:2017-09-11:02B}
\end{eqnarray}
where $\tilde{L}_b, \tilde{L}_\sigma$ are
as in Assumption~\ref{eq:2017-03-23:00}.
Then,
by using
\eqref{eq:2017-09-11:01B}
and
\eqref{eq:2017-09-11:02B} in
\eqref{2017-09-27:02},
 we get
 \begin{equation}\label{2017-09-27:03}
   \begin{split}
     \mathbb{E} \left[ |X^{z_\lambda,I}_s-\Sigma^{\lambda,x,y,I}_s|^2
     \right] 
 \leq & 2\left(L_b+L_\sigma^2 \right) \int_0^t
     \mathbb{E} \left[ |X^{z_\lambda,I}_u-\Sigma^{\lambda,x,y,I}_u|^2
     \right]  du\\
     & + 2\lambda(1-\lambda) \tilde{ L}_b \int_0^t \mathbb{E} \left[
       |X^{z_\lambda,I}_u-\Sigma^{\lambda,x,y,I}_u|\cdot |
          X^{x,I}_u -
          X^{y,I}_u
       |^2 \right]  du\\
     & +
2 \lambda^2(1-\lambda)^2 \tilde{ L}_\sigma^2 \int_0^t \mathbb{E} \left[ |    
         X^{x,I}_u -X^{y,I}_u
       |^4 \right] du.
   \end{split}
 \end{equation}
\noindent 
Using the inequality
 \begin{equation*}\label{2017-09-27:10}
   \begin{aligned} 
2\lambda(1-\lambda) ab   &\leq a^2 + {\lambda^2(1-\lambda)^2}b^{2} \ \ \ \ \  \forall a,b\in\R,
\end{aligned}
\end{equation*}
and \eqref{2017-09-27:15} into
\eqref{2017-09-27:03},
we obtain
 \begin{equation*}\label{2017-09-27:08}
   \begin{split}
     \mathbb{E} \left[ |X^{z_\lambda,I}_t-\Sigma^{\lambda,x,y,I}_t|^2
     \right]  \leq 
& \left(2L_b+2L_\sigma^2 +\tilde{L}_b\right) \int_0^t
     \mathbb{E} \left[ |X^{z_\lambda,I}_u-\Sigma^{\lambda,x,y,I}_u|^2
     \right]  du
     \\
&+
\lambda^2(1-\lambda)^2 (\tilde{L}_b+2\tilde{ L}_\sigma^2) \int_0^t \mathbb{E} \left[ |    
         X^{x,I}_u -X^{y,I}_u
       |^4 \right] du\\
         \leq
&  \left(2L_b+2L_\sigma^2 +\tilde{L}_b\right) \int_0^t
     \mathbb{E} \left[ |X^{z_\lambda,I}_u-\Sigma^{\lambda,x,y,I}_u|^2
     \right]  du\\
& +
 (\tilde{L}_b+2\tilde{ L}_\sigma^2) \lambda^2(1-\lambda)^2 \int_0^t e^{C_0u} |x-y|^4 du\\
       \leq 
& \left(2L_b+2L_\sigma^2+\tilde{L}_b \right) \int_0^t
     \mathbb{E} \left[ |X^{z_\lambda,I}_u-\Sigma^{\lambda,x,y,I}_u|^2
     \right]  du\\
& +
 \frac{\tilde{L}_b+2\tilde{ L}_\sigma^2}{C_0}(e^{C_0t}-1) \lambda^2(1-\lambda)^2|x-y|^4,
   \end{split}
 \end{equation*}
 where $C_0$ is the constant of \eqref{2017-09-27:15}.
 We conclude by Gronwall's inequality.
\end{proof}


\begin{thebibliography}{99}
\bibitem{AE} A.B.~Abel and J.C.~Eberly, 
``Optimal investment
    with costly reversibility'', Review of Economic Studies, Vol.\ 63,
  (1996), pp.\ 581--593.

\bibitem{AFPV} 
R.A\"id, S.~Federico, H.~Pham and B.~Villeneuve,
``Explicit investment rules with time-to-build and
    uncertainty''. J.\ Econ.\ Dynam.\ Control $51$ $(2015)$, pp.\  240--256.

\bibitem{Al}  L.H.~Alvarez, 
``A class of solvable impulse control problems'', Applied Mathematics and Optimization, Vol.\ 49 (2004), pp.\ 265--295.

\bibitem{A1} L.H.~Alvarez, 
``Irreversible capital accumulation
    under interest rate uncertainty'', Mathematical Methods of
  Operations Research, Vol.\ 72 (2009), No.\ 2, pp.\ 249--271.

\bibitem{A2}  L.H.~Alvarez, 
``Optimal capital accumulation under
    price uncertainty and costly reversibility'', Journal of Economics,
  Dynamics and Control, Vol.\ 35 (2011), No.\ 10, pp.\ 1769--1788.

\bibitem{AlLe} L.H.~Alvarez and J.~Lempa,  ``On the optimal stochastic impulse control of linear diffusions'', SIAM Journal on Control and Optimization, Vol.\ 47, No.\ 2 (2008), pp.\ 703--732.

\bibitem{Anderson} R.F.~Anderson,
``Discounted Replacement, Maintenance, and Repair Problems in Reliability'',  Mathematics of Operations Research, Vol.~19 (1994), Issue 4, pp.~909--945.

\bibitem{AHM}  K.J.~Arrow, T.~Harris  and J.~Marshak, ``Optimal
    inventory policy'', Econometrica, Vol.\ 19 (1951), No.\ 3, pp.\ 250--272.

\bibitem{Avriel} M.~Avriel, W.E.~Diewert, S.~Schaible and I.~Zang, \emph{Generalized Concavity}, SIAM, Classics in Applied Mathematics 63, 2010.

\bibitem{BK} F.M.~Baldursson and I.~Karatzas,
``Irreversible
    investment and industry equilibrium'', Finance and Stochastics,
  Vol.~1 (1997), No.~1, pp.~69--89.

\bibitem{B} P.~Bank, ``Optimal control under a dynamic fuel
    constraint'', SIAM Journal on Control and Optimization, Vol.~44
  (2005), No.~4, pp.~1529--1541.
  
  \bibitem{BS} Bar-Ilan, A.~Sulem,
``Explicit Solution of Inventory Problems with Delivery Lags'',  Mathematics of Operations Research, Vol.~20 (1995), Issue 3, pp.~709--720.


\bibitem{BSZ} A.~Bar-Ilan, A.~Sulem and A.~Zanello. 
  ``Time-to-Build and Capacity Choice'', {Journal of Economic
    Dynamics and Control}, Vol.~26, pp.~69--98, 2002.

\bibitem{BEM} E.~Bayraktar, T.~Emmerling and J.L.~Menaldi,
``On the impulse control of jump diffusions'',  SIAM Journal on Control and Optimization, Vol.~51 (2013), No.~3, pp.~2612--2637.

\bibitem{Belak} C.~Belak, S.~Christensen and F.T.~Seifred, 
``A general verification result for stochastic impulse control problems'', SIAM Journal on Control and Optimization, Vol.~55 (2017), No.~2, pp.~627--649.

\bibitem{BB} A.~Bensoussan and B.~Chevalier-Roignant, ``Sequential capacity expansion options'', Operations Research, Forthcoming.

\bibitem{BeLio} A.~Bensoussan and J.L.~Lions,
\emph{Impulse control and
    Quasi-variational inequalities}, Gauthier-Villars, Paris, 1984.
    
\bibitem{BLL} A.~Bensoussan, J.~Liu, J.~Yuan, ``Singular control and impulse control: a common approach'', Discrete and continuous dynamical systems (Series B), Vol.\,13 (2010), No.\,1, pp.\,27--57.
     

\bibitem{Breiman} L.~Breiman, \emph{Probability}, Classics in Applied Mathematics, SIAM (1992).


\bibitem{CLP} A.~Cadenillas, P.~Lakner, M.~Pinedo,
``Optimal Control of a Mean-Reverting Inventory'', Operations Research, Vol.~58 (2010), Issue 6, pp.~1697--1710.

\bibitem{CZ} A.~Cadellinas and F.~Zapatero,
``Classical and impulse
    stochastic control of the exchange rate using interest rates and
    reserves'', Math.\ Finance, Vol.\~10, Issue 2, pp.~141--156, 2000.
		
\bibitem{ChenGuo}  Y.-S.A.~Chen  and X.~Guo, 
``Impulse control of multidimensional jump diffusions in finite time horizon'', SIAM J.~on Control and Optim., Vol.~51, No.~3, 
pp.~2638--2663, 2013.		
		
\bibitem{CH2} M.B.~Chiarolla and U.G.~Haussman,
``On a stochastic
    irreversible investment problem'', SIAM Journal on Control and
  Optimization, Vol.~48 (2009), No.~2, 438--462.

\bibitem{CF} M.B.\ Chiarolla, G.\ Ferrari. Identifying the Free Boundary of a Stochastic, Irreversible Investment Problem via the Bank-El Karoui Representation Theorem, SIAM Journal on Control and Optimization 52(2) (2014), pp.~1048--1070.

\bibitem{C} S.~Christensen, ``On the Solution of General Impulse Control Problems
Using Superharmonic Functions'', Stochastic Processes and their Applications, Volume 124 (2014), No.~1, pp.~709--729.

\bibitem{CS} S.~Christensen, P.~Salminen. ``Impulse control and expected suprema''. Advances in Applied Probability, Vol.~49, Issue 1, pp.~238--257, 2017.

\bibitem{CoRi} G.M.~Constantidinies and  S.F.~Richard, 
  ``Existence of
    optimal simple policies for discounted-cost inventory and cash
    management in continuous time'', Operations Research, Vol.~26,
  Issue 4, pp.~620--636, 1978.


\bibitem{DDSV} M.H.~Davis, M.A.H.~Dempster, S.P.~Sethi and D.~Vermes,
  ``Optimal capacity expansion under uncertainty'', Advances in
  Applied Probability, Vol.~19 (1987), pp.~156--176.

\bibitem{DGW} M.~Davis, X.~Guo and G.~Wu,
``Impulse control of multidimensional jump
diffusions'', SIAM Journal on Control and Optimization, Vol.~48 (2010), pp.~5276--5293.


\bibitem{DY} J.G.~Dai, D.~Yao,
``Brownian Inventory Models with Convex Holding Cost, Part 1: Average-Optimal Controls'',  Stochastic Systems, Vol.~3 (2013), Issue 2, pp.~442--499.

\bibitem{DY2} J.G.~Dai, D.~Yao,
``Brownian Inventory Models with Convex Holding Cost, Part 2: Discount-Optimal Controls'' (2013),  Stochastic Systems, Vol.~3, Issue 2, pp.~500--573.

\bibitem{SPA0} T.~De Angelis and G.~Ferrari,
``A Stochastic Partially Reversible Investment Problem on a Finite Time-Horizon: Free-Boundary Analysis'', Stochastic Processes and their Applications Vol.~124(3) (2014), pp.~4080-4119.

\bibitem{MOR} T.~De Angelis, S.~Federico and G.~Ferrari,
 ``Optimal Boundary Surface for Irreversible Investment with Stochastic Costs'', Mathematics of Operations Research, 
  Vol.\,42, No.\,4, pp.\,1135--1161 (2017).

\bibitem{EH} J.F.~Eastham, K.J.~Hastings
``Optimal Impulse Control of Portfolios'',  Mathematics of Operations Research, Vol.~13 (1988), Issue 4, pp.~588--605.



\bibitem{E} M.~Egami, 
``A direct solution method for stochastic
    impulse control problems of one-dimensional diffusions'', SIAM
  Journal on Control and Optimization, Vol.~47, No.~3, pp.~1191--1218,
  2008.

\bibitem{Evans} L.~Evans, \emph{Partial Differential Equations (Second Edition)}, Graduate Studies in Mathematics, Vol.\,19, AMS Providence, 2010.

\bibitem{FP} S.~Federico, H.~Pham, ``Characterization of optimal boundaries in reversible investment problems''. SIAM Journal on Control and Optimization, Vol.\,52, No.\,4, pp.\,2180-2223 (2014).

\bibitem{AAP} G.~Ferrari,
``On an Integral Equation for the Free-Boundary of Stochastic, Irreversible Investment Problems''
, The Annals of Applied Probability, Vol.~25(1) (2015), pp.\ 150-176.

\bibitem{FTo}
G.\ Ferrari, T.\ Koch. ``On a Strategic Model of Pollution Control'', Annals of Operations Research (2018). (DOI) 10.1007/s10479-018-2935-7

\bibitem{FS} G.\ Ferrari, P.\ Salminen, ``Irreversible Investment under L\`evy Uncertainty: an Equation for the Optimal Boundary, Advances in Applied Probability 48(1) (2016)'', pp.~298--314.


\bibitem{GP} X.~Guo and H.~Pham,
``Optimal partially reversible
    investments with entry decision and general production function'',
  Stochastic Processes and their Applications, Vol.~115 (2005),
  No.~5, pp.~705-736.

\bibitem{GSZ} J.W.~Gu, M.~Steffensen, H.~Zheng, ``Optimal Dividend Strategies of Two Collaborating Businesses in the Diffusion Approximation Model'', Mathematics of Operations Research, Vol.\,43,pp.\, 377--398, 2018.


\bibitem{GuoWu} X.~Guo and  G.~Wu, 
``Smooth fit principle for impulse
    control of multidimensional diffusion processes'', SIAM Journal on
  Control and Optimization, Vol.~48, No.~2, pp.~594--617,  2009.
  
  
\bibitem{HST} J.M.~Harrison, T.M. ~Sellke,  A.J.~Taylor,
``Impulse Control of Brownian Motion'', Mathematics of Operations Research, Vol.~8 (1983), Issue 3, pp.~454--466.


\bibitem{HYZ} S.~He, D.~Yao, H.~Zhang,
``Optimal Ordering Policy for Inventory Systems with Quantity-Dependent Setup Costs'',  Mathematics of Operations Research, Vol.~42 (2017), Issue 4, pp.~979--1006.
  
\bibitem{HSZ}  K.L.~Helmes, R.H.~Stockbridge, and C.~Zhu, ``A Measure Approach for
Continuous Inventory Models: Discounted Cost Criterion'', SIAM Journal
on Control and Optimization Vol.~53 (2015), No.~4, 
pp.~2100-2140.
    
\bibitem{HT} J.E.~Hodder and A.~Triantis, 
``Valuing flexibility as a
    complex option'', Journal of Finance, Vol.~45 pp.~549--565, 1990.

\bibitem{KS}
I.\ Karatzas and S.E.\ Shreve,
\emph{Brownian Motion and Stochastic Calculus}.
Springer, $2^{\textrm{nd}}$ edition, 1991.

\bibitem{K} R.~Korn,
``Portfolio Optimization with strictly
    positive transaction costs and impulse control'', Finance and
  Stochastics, Vol.~2, pp.~85--114, 1998.

\bibitem{JZ} A.~Jack and M.~Zervos, ``Impulse Control of One-dimensional It\^o Diffusions with an Expected and a Pathwise Ergodic Criterion'', Applied Mathematics and Optimization, Vol.~54, pp.71--93, 2006.


\bibitem{JP} M.~Jeanblanc-Picqu\'e, ``Impulse control method and
    exchange rate'', Mathematical Finance, Vol.~3, Issue 2, pp.~161--177,
  1993.

\bibitem{MS} R.~McDonald and D.~Siegel, ``The Value of Waiting to Invest''
The Quarterly Journal of Economics, Vol.~101, Issue 4, pp.~707--727, 1986

\bibitem{M} A.S.~Manne, 
``Capacity expansion and probabilistic
    growth'', Econometrica, Vol.~29 (1961), No.~4, pp.~632--649.

    
\bibitem{MT} D.C.~Mauer and A.~Triantis,
``Interactions of corporate
    financing and investment decisions:\ a dynamic framework'', Journal
  of Finance, Vol.~49, pp.~1253--1277, 1994.

\bibitem{MZ} A.~Merhi and M.~Zervos, 
``A model for reversible
    investment capacity expansion'', SIAM Journal on Control and
  Optimization, Vol.~46 (2007), No.~3, pp.~839--876.


\bibitem{MFM} D.~Mitchell, H.~Feng, K.~Muthuraman,
``Impulse control of interest rates'',  Operations Research, Vol.~62 (2014), Issue 3, pp.~602--615.



\bibitem{MO} J.~Morton and B.~Oksendal,
``Optimal portfolio
    management with fixed costs of transactions'', Mathematical Finance,
  Vol.~5, pp.~337--356, 1995.


\bibitem{MSW} K.~Muthuraman, S.~Seshadri, Q.~Wu,
``Inventory Management with Stochastic Lead Times'',  Mathematics of Operations Research, Vol.~40 (2014), Issue 2, pp.~302--327.



\bibitem{O} A.~\O ksendal, 
``Irreversible investment problems'',
  Finance and Stochastics, Vol.~4 (2000), No.~2, pp.~223--250.

\bibitem{OS} B.~\O ksendal, A.~Sulem, \emph{Applied Stochastic Control of Jump-Diffusions}, Springer-Verlag (2007). 

\bibitem{OUZ} B.~\O ksendal, J.~Ub\o e, T.~Zhang, ``Non-robustness of some impulse control problems with respect to intervention costs'', Stochastic Analysis and Applications
Vol.~20 (2002), No.\ 5, pp.~999--1026.

\bibitem{ODV} M.~Ormeci, J.G.~Dai, J.~Vande Vate,
``Impulse Control of Brownian Motion: The Constrained Average Cost Case'',  Mathematics of Operations Research, Vol.~56 (2008), Issue 3, pp.~618--629.

\bibitem{P} H.~Pham, \emph{Continuous-time stochastic control and
    applications with financial applications}, Springer, Series
  Stochastic Modelling and Applied Probability, Vol.~61 (2009).


\bibitem{RS} F.~Riedel and X.~Su, 
``On irreversible investment'',
  Finance and Stochastics, Vol.~15 (2011), No.~4, pp.~607--633.

\bibitem{Rockafellar1970}
R.T.~Rockafellar,
\emph{Convex Analysis},
Princeton University Press (1970).

\bibitem{ST} P.~Salminen, B.Q.~Ta, ``Differentiability of excessive functions of one-dimensional diffusions and the principle of smooth-fit''. Advances in Mathematics of
Finance, Vol.~104 (2015), pp.~181--199.


\bibitem{S1} H.~Scarf, 
``The optimality of $(S,s)$ policies in the
    Dynamic Inventory Problem'',
 in \emph{Mathematical Methods of Social
  Sciences 1959: Proceedings of the First Stanford Symposium}, pp.~196--202, S.~Karlin and P.~Suppes (eds.), Stanford
  University Press (1960).

\bibitem{S2} A.~Sulem, 
``A solvable one-dimensional model of a
    diffusion inventory system'', Mathematics of Operations research,
  Vol.~11 (1986), pp.~125--133.
  
  
\bibitem{Sulem} A.~Sulem,
``Explicit Solution of a Two-Dimensional Deterministic Inventory Problem'',  Mathematics of Operations Research (1986), Vol.~11, Issue 1, pp.~134--146.
        
\bibitem{W} H.~Wang,
``Capacity expansion with exponential jump
    diffusion process'', Stochastics and Stochastics Reports, Vol.~75
  (2003), No.~4, pp.~259--274.
 
  \bibitem{KY} K.~Yamazaki. ``Inventory Control for Spectrally Positive L\'evy Demand Processes'',  Mathematics of Operations Research, Vol.~42 (2016), Issue 1, pp.~302--327.

\bibitem{YongZhou} J.~Yong and X.Y.~Zhou,
 \emph{Stochastic Controls:\
    Hamiltonian Systems and HJB equations}, Springer-Verlag, Berlin,
  New York, 1999.
\end{thebibliography}
\end{document}